\numberwithin{equation}{section}
\numberwithin{figure}{section}
\theoremstyle{plain}
\newtheorem{thm}{Theorem}[section]
\newtheorem{prop}[thm]{Proposition}
\newtheorem{lem}[thm]{Lemma}
\newtheorem{cor}[thm]{Corollary}
\newtheorem{fact}[thm]{Fact}
\theoremstyle{definition}
\newtheorem{defn}[thm]{Definition}
\newtheorem*{defn*}{Definition}
\DeclareMathOperator{\E}{\mathbb{E}}
\DeclareMathOperator{\Var}{\mathrm{Var}}
\DeclareMathOperator{\supp}{\mathrm{supp}}
\newcommand{\Ind}[1]{\mathbf{1}\{#1\}}
\newcommand{\set}[1]{\{#1\}}
\newcommand{\deq}{\stackrel{d.}{=}}
\newcommand{\eRR}{\mathbb{\overline{R}}}
\newcommand{\pois}{\mathrm{Pois}}
\newcommand{\MM}{\mathcal{M}}
\renewcommand{\SS}{\mathsf{S}}
\newcommand{\vX}{\vec{X}}
\newcommand{\iA}{\mathsf{A}}
\newcommand{\ia}{\mathsf{a}}
\newcommand{\iB}{\mathsf{B}}
\newcommand{\ib}{\mathsf{b}}
\newcommand{\iP}{\mathsf{P}}
\newcommand{\iU}{\mathsf{U}}
\newcommand{\itA}{\widetilde{\iA}}
\newcommand{\itB}{\widetilde{\iB}}
\newcommand{\itP}{\widetilde{\iP}}
\newcommand{\itb}{\tilde{\ib}}
\newcommand{\iD}{\mathsf{D}}
\newcommand{\tpois}{\mathrm{tPois}}
\newcommand{\tq}{\tilde{q}}
\renewcommand{\phi}{\varphi}
\begin{document}

\allowdisplaybreaks
\raggedbottom

\title{Reconstruction of colourings without freezing}
\author{Allan Sly\thanks{Princeton University and University of California, Berkeley. Supported by NSF grants DMS-1208338 and DMS-1352013. Email:sly@stat.berkeley.edu} \and Yumeng Zhang\thanks{University of California, Berkeley. Email:ymzhang@berkeley.edu} }

\maketitle
\begin{abstract}
We prove that reconstruction in the $k$-colouring model occurs strictly below the threshold for freezing for large $k$.
\end{abstract}

\section{Introduction}

The study of broadcast models or spin systems on trees arises naturally in many areas including probability, combinatorics and statistical physics as well as  in more applied settings such as computational evolutionary biology and information theory.  The so-called reconstruction problem  asks when the mutual information between the root and the spins at level $\ell$ is bounded away from 0 as $\ell \to \infty$ and thus can be viewed as a type of point to set dependence  (see definition in \S~\ref{sec:Defin}).  It emerges in numerous settings, for example in biology it determines a phase transition for the information requirements for phylogenetic reconstruction~\cite{daskalakis2011evolutionary}.

Here we are most interested in the role the  reconstruction threshold plays in the study of random constraints satisfaction problems (rCSPs).  It has been shown that in a range of rCSPs such as random colourings of random graphs, the space of solutions undergoes what physicists call a dynamical phase transition in which the space of solutions splits into exponentially many small, isolated clusters~\cite{achlioptas2008algorithmic}.  This transition also seems closely related to computational barriers for algorithms for finding solutions.  It has been conjectured that the threshold for this transition is exactly the reconstruction threshold and this is known up to first order asymptotic.

Locating the exact reconstruction threshold has only been achieved in a small number of spin systems, the symmetric~\cite{evans2000broadcasting} and near-symmetric binary channels~\cite{borgs2006kesten} and the three state symmetric channel with large degrees~\cite{sly2011reconstruction}.  For the $k$-colouring model only bounds are known which match in the first and second order asymptotic term.  On the $d$-regular tree the model is non-reconstructible whenever~\cite{bhatnagar2011reconstruction,sly2009reconstruction},
\begin{equation}
d\le k(\log k+\log\log k+1-\log2+o_{k}(1)).
\end{equation}
The best previous bound for reconstruction is when
\begin{equation}\label{eq:freezing}
d\ge k(\log k+\log\log k+1+o_{k}(1))
\end{equation}
by~\cite{mossel2003information,semerjian2008freezing}.  This uses the following simple algorithm; it reconstructs the root only when it is uniquely determined by the leaves, in which case we say the root is \emph{frozen}.  This can be implemented and analysed using a simple recursion  and leaves a gap of width just $k\log2$. It is known that~(\ref{eq:freezing}) is tight for freezing of the root so one natural question to ask is whether  reconstruction is possible when the root is not frozen. We  answer this in the positive showing that the $k$-colouring model is still reconstructible for parameters in a small but non-vanishing region of width $\delta k$ below the freezing threshold.

Interpreted in the setting of random colourings on random graphs this is opens a number of tantalising questions.  It suggests a range of parameters in which there is clustering of colourings but where the clusters are unfrozen meaning that all vertices can take every possible colour within the cluster.  It remains an important question to understand what leads to the computational difficulty in colouring random graphs, the onset of clustering or of freezing.  Our result separates these two transitions making this distinction of keen importance.

\subsection{Definition and Main Results}\label{sec:Defin}

The broadcast model on trees is the process where information is sent from the roots downward, along edges acting as noisy channels, to the leaves of the trees. Given a tree $T=(V,E)$, a finite set $[k]=\{1,\dots,k\}$ of $k$ values
and a $[k]\times[k]$ probability matrix $M$ as the noisy channel, the broadcast model on tree $T$ is the probability measure on the space of configurations $[k]^{V}$ defined as follows: The spin $\sigma_{\rho}$
at the root $\rho$ is chosen according to the stationary distribution of $M$, denoted by $\pi$. Then for each vertex $v\in T$ with parent $u$, the spin $\sigma_{v}$ is chosen according to the conditional distribution $P(\sigma_{v}=i\mid\sigma_{u}=j)=M(i,j)$. In this paper we will focus on the colouring model with alphabet $[k]$ and probability matrix $M(i,j)=\frac{1}{k-1}1\{i\neq j\}$.

Equivalently, one can also define the colouring model by its Gibbs measure. A proper $k$-colouring of the graph $G=(V,E)$ is a configuration
$\sigma:V\to[k]$ such that for every edge $e=(u,v)\in E$, $\sigma_{u}\neq\sigma_{v}$.
The (free) Gibbs measure of random colourings is given by the uniform
measure
\[
P(\sigma)=\frac{1}{Z}\prod_{e=(u,v)\in E}1\{\sigma_{u}\neq\sigma_{v}\},
\]
where $Z$ is the normalizing constant equaling to the number of proper colourings of $G$.

For technical convenience and also of independent interest, we allow randomness in the underlying trees. For any probability distributions $\xi$ on the set of non-negative integers $\mathbb{Z}_{+}$, we let $\mathcal{T}_{\xi}$ denote the distribution of Galton-Watson tree with offspring distribution
$\xi$. Two special cases of interest are the $d$-ary tree $\mathcal{T}_{d}$
and the Galton-Watson tree $\mathcal{T}_{\pois(d)}$ with Poisson
offspring distribution of average degree $d$. They are the natural tree models to study with regard to random $d$-regular graphs and Erd\H{o}s-R\'{e}nyi
random graphs respectively. The definition of broadcast model can be easily generalized to the (first finite levels of) Galton-Watson trees.

Given a (possibly random) infinite tree, the reconstruction problem asks if the distribution of the state of the root is affected by the configuration on the $n$'th level as $n$ goes to infinity. More precisely, let $T_{n}$ be the first $n$ levels of tree $T$ and $L_{n}$ be its set of vertices at level $n$.
Write $T_n = T, L_{n}=\varnothing$ if $T$ has fewer than $n$ levels.
\begin{defn*}[Reconstruction]
    Given a family of Galton-Watson trees $\mathcal{T}_{\xi}$, we say that the $k$-colouring model is reconstructible for $\mathcal{T}_{\xi}$
if there exist $i,j\in[k]$ such that,
\[
    \limsup_{n\to\infty}
    \E_{T\sim\mathcal{T}_{\xi}}
    d_{\mathrm{TV}} (
        P(\sigma_{L_{n}}=\cdot\mid T,\sigma_{\rho}=i),
        P(\sigma_{L_{n}}=\cdot\mid T,\sigma_{\rho}=j)
    )
    >0,
\]
where $d_{\mathrm{TV}}$ is the total-variation distance. Otherwise we say that the model is non-reconstructible.
\end{defn*}
Non-reconstruction implies that on average the configurations on the distant levels have a vanishing effect on the root. Equivalently, it corresponds
to the mutual information between the root and the leaves going to 0 (see e.g.\ \cite{mossel2004survey} for more equivalent definitions). The freezing threshold is defined as follows:
\begin{defn*}[Freezing]
    Given a family of Galton-Watson tree $\mathcal{T}_{\xi}$, we say that the $k$-colouring model is frozen for $\mathcal{T}_{\xi}$ if
\[
\limsup_{n\to\infty}P_{T\sim\mathcal{T}_{\xi}}(\sigma_\rho\textup{ is uniquely determined by }\sigma_{L_{n}})>0.
\]

\end{defn*}
The exact location of freezing threshold for Poisson tree $\mathcal{T}_{\pois(d)}$
has been calculated in \cite{molloy2012freezing}. Following a similar
calculation for $\mathcal{T}_{d}$, one can show that for $k\ge k_{0}$, the $k$-colouring model is frozen if and only if
\[
d>d_{k}^{f}:=\begin{cases}
\inf_{x>0}x\log^{-1}\left(1-\frac{(1-e^{-x})^{k}}{k-1}\right) & \mathcal{T}_{d}\\
\inf_{x>0}\frac{(k-1)x}{(1-e^{-x})^{k}} & \mathcal{T}_{\mathrm{Pois}(d)}
\end{cases}=k(\log k+\log\log k+1+o_{k}(1)).
\]
It is easy to see that the $k$-colouring problem is reconstructible on $\mathcal{T}_{\xi}$ if it is frozen. Indeed, the freezing
threshold gives the best known upper bound for reconstruction threshold
with the only exception of $d=5$ and $k=14$, in which case reconstruction
is proved in \cite{mezard2006reconstruction} using a variational
principle. The main result of this paper is the following theorem which implies that the reverse statement is not true. Throughout we will assume that $k$ exceeds a large enough absolute constant $k_0$, where the exact value of $k_0$ may vary from place to place.
\begin{thm} \label{thm:col_main_rec}
There exists a constant $\beta^{*}<1$ such that for any $k\ge k_0$ the $k$-colouring model is reconstructible for both $\mathcal{T}_{d}$
and $\mathcal{T}_{\pois(d)}$ for $d$ satisfying
\begin{equation}
d\ge k(\log k+\log\log k+\beta^{*}).\label{eq:col_main_recregime}
\end{equation}

\end{thm}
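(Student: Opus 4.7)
The plan is to exhibit an explicit estimator $\hat\sigma_\rho = \hat\sigma_\rho(T,\sigma_{L_n})$ whose bias towards the true colour is bounded away from zero uniformly in $n$, refining the freezing estimator. Let $S_v\subseteq[k]$ denote the set of colours consistent with $\sigma_{L_n}$ restricted to the subtree $T_v$, and let $p_v(c):=P(\sigma_v=c\mid T_v,\sigma_{L_n}\cap T_v)$ be the induced posterior. Standard belief-propagation gives the recursion
\[
p_v(c)\propto\prod_{u\text{ child of }v}(1-p_u(c)),
\]
and the freezing algorithm corresponds to detecting when $p_v(c)=0$. Below freezing a typical vertex has $|S_v|\ge2$, so my estimator must instead squeeze information out of the \emph{magnitude} of $p_v(c)$ on the non-frozen colours.

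Following the standard broadcast comparison, I would compare two ensembles of $(S_v,p_v)$ at depth $n$: the \emph{free} ensemble obtained from $\sigma\sim P$, and for each target $c^\star\in[k]$ the \emph{planted} ensemble where additionally $\sigma_\rho=c^\star$. Both satisfy the same BP recursion but with shifted leaf marginals, and I would track a scalar bias functional such as $\psi_v(c^\star):=p_v(c^\star)-\frac{1}{|S_v|}\Ind{c^\star\in S_v}$, which vanishes in expectation under the free ensemble and receives a positive push under the planted one. Reconstruction would then follow from a uniform lower bound $\E[\psi_\rho(c^\star)\mid\sigma_\rho=c^\star]=\Omega(1)$ via a standard total-variation computation.

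The core technical step is a recursion-level comparison of $\psi_v$ between the two ensembles. Below freezing, the number of children $u$ with $S_u=\set{c}$ is approximately $\pois(dq_c)$ for quantities $q_c$ satisfying a fixed-point equation, and under the planted ensemble with $\sigma_v=c^\star$ this rate is zero for $c=c^\star$ and slightly perturbed for $c\ne c^\star$. Expanding the BP update around the near-uniform free fixed point should yield an approximate one-step relation
\[
\E[\psi_v\mid\sigma_v=c^\star]\approx\alpha(d,k)\,\E[\psi_u\mid\sigma_u=c^\star]+(\text{noise}),
\]
with a matching second-moment bound on the noise. Reconstruction follows once I establish $\alpha(d,k)\ge1-o_k(1)$ throughout the window $d\ge k(\log k+\log\log k+\beta^*)$ for some fixed $\beta^*<1$, so that iterating the recursion preserves a non-vanishing bias.

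The main obstacle is the sharpness of this non-contraction estimate. The freezing analysis only tracks whether $p_v(c)=0$ and so yields $\beta^*=1$; to get a strict improvement I must extract signal from the subleading BP terms, which forces a careful expansion in the fluctuations of the offspring count and of the children's colour sets — children with $|S_u|=2$ are likely to carry the dominant contribution and may require a dedicated treatment. A secondary technical point is to run the argument uniformly across $\mathcal{T}_d$ and $\mathcal{T}_{\pois(d)}$; since the offspring distribution enters only through the child-count Poisson approximation, this should only affect constants.
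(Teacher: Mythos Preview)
Your proposal has a genuine gap at the ``non-contraction'' step. When you linearize the BP update around the near-uniform point $p_u\approx\tfrac{1}{k}$ (which is what $\psi_v$ effectively measures once freezing has died out), the one-step gain is precisely the Kesten--Stigum factor $d\lambda^2=d/(k-1)^2$. In the regime $d=k(\log k+\log\log k+\beta)$ this is $O(\log k/k)\to0$, so $\alpha(d,k)\to0$ rather than $\alpha(d,k)\ge1-o_k(1)$. A scalar first-moment recursion for the bias therefore contracts rapidly and cannot separate $\beta^*<1$ from $\beta^*=1$; the signal that survives below freezing lives entirely in the tail of the posterior distribution, not in its mean.

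The paper's argument is structurally different in two ways. First, instead of a scalar functional it tracks the full law of $\|p_v\|_\infty$ (equivalently a measure on $[\tfrac{1}{k},1]$) and exhibits a carefully engineered candidate $\mu_k$---with an atom at $\tfrac{1}{k}$, an atom near $\tfrac{1}{2}$, and a heavy tail of the form $\gamma y^{-2}e^{\delta y}$ in log-coordinates---such that one round of BP produces a law that \emph{stochastically dominates} $\mu_k$ by a margin $c/\log k$. The heavy tail, which encodes near-frozen subtrees, is exactly what your moment recursion throws away but is what carries the information. Second, because the BP map is nonlinear, stochastic dominance after one step does not iterate; the paper handles this with a channel-degradation trick (randomly permuting the leaf colours according to two specific families of distributions) that projects the posterior back onto $\mu_k$ at every level, turning $\mu_k$ into an exact fixed point of a degraded recursion. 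Reconstruction then follows because the degraded posterior is a measurable function of $\sigma_{L_n}$. Your intuition that ``children with $|S_u|=2$ carry the dominant contribution'' is in the right direction---it corresponds to the atom at $\tfrac{1}{2}$ in $\mu_k$---but making it work requires the full distributional analysis rather than a linearized mean.
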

For a complete picture, it has been shown \cite{bhatnagar2011reconstruction,sly2009reconstruction}
that the $k$-colouring problem is non-reconstructible on $d$-ary tree with
\[
d<k(\log k+\log\log k+1-\log2+o_{k}(1)),
\]
and the similar result extends to general Galton-Watson
trees under mild restrictions \cite{efthymiou2014reconstruction}. While numerical results
of \cite{zdeborova2007phase} suggest that the actual reconstruction
threshold has a constant term roughly in the middle of $1-\log2$
and $1$, for technical reasons we only show reconstruction for $\beta_*$ close to the freezing threshold $1$. Nonetheless, we believe that our result
is of interest because it suggests a distinct phase transition in the solution space evolution of rCSPs, the existence of which was previously unclear. We will address this point in detail in the next section.

\subsection{Motivation from Statistical Physics}

Random instances of constraint satisfaction problems (rCSPs) have been studied in different areas including theoretical computer science, probability theory, combinatorics and statistical physics. Much of our understanding of the problem over the last two decades comes from the replica/cavity method originally developed by statistical physicist in study of spin glasses, among which perhaps the two most important questions are\emph{ when does a rCSP have solution} and \emph{how can we find/sample one}.
Significant progresses have been made in the last couple of years towards the first question. Exact satisfiability thresholds have been established for $k$-NAESAT \cite{ding2014satisfiability}, maximum independent set \cite{ding2013maximum} and $k$-SAT \cite{ding2014proof}, and the $k$-colourability threshold has been located within an interval of length $(2\log2-1)$ \cite{coja2013upper,coja2013chasing}.

Meanwhile, on the algorithmic side, it has been observed for many models of interest that all polynomial-time algorithms fail to find solutions at densities far below the satisfiability threshold.
This algorithmic barrier is believed to be closely related to the phase transitions in the geometry of the set of solutions. Here we briefly review the heuristic phase diagram developed by statistical physicists \cite{krzakala2007gibbs,zdeborova2007phase}, as we
fix $k$ and increase the average connectivity $d$. The set of solutions start out as a well-connected component containing all but exponentially small fraction of solutions.
At the \emph{clustering threshold} $d_{\mathrm{clust}}$, the solution space splits into an exponential number of ``clusters" where clusters are well-connected inside but well-separated from each other, and no single cluster contains more than an exponentially small fraction of all solutions.
Then at the possibly higher value of $d$, namely the \emph{rigidity threshold} $d_\textup{rigid}$, typical clusters become ``frozen", i.e.\ a linear fraction of variables take the same value in all solutions of that cluster.
Finally at much larger values of $d$ come the condensation threshold and satisfiability threshold, which we will not go into details here.

These predictions have been partially verified in many cases. Apart from the results on satisfiability threshold mentioned before, Molloy~\cite{molloy2012freezing} proved that the rigidity phase transition coincide with the freezing threshold on trees, in the case of $k$-colouring. And in the prominent paper \cite{achlioptas2008algorithmic}, the authors proved that the solution space does split into exponentially many frozen clusters for $k$-colourings models and constraint densities $(1+o_{k}(1))k\log k\le d\le(2-o_{k}(1))k\log k$.

Among the different phase transitions mentioned above, it has been conjectured that the clustering threshold and the rigidity threshold are the two factors resulting the onset of hard random-CSP instances.  However different opinions exist on which one is more responsible \cite{mezard2002random, zdeborova2007phase,zdeborova2008constraint}, if any of them \cite{braunstein2016LDP}. And much is unclear about how they affect the performance of algorithms directly.
One difficulty lies in the fact that the two thresholds are extremely close to each other. According to the physics prediction, \cite{zdeborova2007phase}, both thresholds happen at $k(\log k+\log \log k + \alpha+ o(1))$ for different values of $\alpha$ and no evidence shows even at a heuristic level that such gap is indeed non-vanishing.
In fact, it has been widely believed that the clustering phase transition, marking the onset of long range correlation, coincides with the reconstruction threshold on trees \cite{montanari2011reconstruction}. If that is the case, then previous results in the reconstruction problems \cite{sly2009reconstruction} imply that the gap between the two thresholds can at most be $k(\log 2 +o(1))$ (compared to the leading term of $k\log k$).

We hope that the result of this paper can contribute to the understanding of colourings on random graphs in two directions. First, we show for the first time that the gap between reconstruction threshold and freezing threshold on trees is linear in $k$. This combined with the conjecture that reconstruction coincides with clustering strongly suggests a distinct phase where the solution space are clustered but non-frozen. It will be of great interest to analyze algorithms in this region. Secondly, the distributional recursion involved in the reconstruction problem  (known as the averaged 1RSB equation in physics jargon \cite{mezard2009information}) is closely related to the BP recursion, thus in bounding the fixed point of the reconstruction recursion, we hope to provide additional information on the fixed point of the BP recursion, and in turn improve the understanding of the structure of the clusters.

We conclude this section by noting the implication of our results for sampling algorithms, as non-reconstruction is closely related to the efficiency of MCMC. Typically, local algorithms are efficient only when there is no long-range correlation. Recently, it was shown that Glauber dynamics of $k$-colouring model on $d$-ary trees has $O(n\log n)$ mixing time in the entire non-reconstruction regime \cite{sly2014glauber}. Much less is known on random graphs (Erd\H{o}s-R\'{e}nyi, random $d$-regular graph, etc.). The best bounds for efficient algorithms so far are $k\ge5.5d$ using the Glauber dynamics  are \cite{efthymiou2014mcmc} and $k\ge3d$ using non-MCMC methods \cite{yin2015sampling}, both of which are still below the uniqueness threshold.

\subsection{Outline of the proof}

The proof of Theorem~\ref{thm:col_main_rec} essentially follows from a detailed analysis of the tree recursion. We begin by specifying the distribution of the reconstruction probability $P(\sigma_{\rho}=\cdot\mid\sigma_{L_{n}})$ on $n$-level trees as a function of the distribution on $(n-1)$-level trees $P(\sigma_{\rho}=\cdot\mid\sigma_{L_{n-1}})$. This defines a distributional recursion on the set of probability measures on the $k$ dimensional simplex $\Delta^{k}$. For the purpose of proving reconstruction, it is enough to show that the recursion has a non-trivial fixed point, which is done in two steps: First we show that there exists a non-trivial measure $\mu$ on $\Delta^k$ such that after one step of the recursion the new measure stochastically dominates the original one. This step is done in Section~\ref{sec:col_stoc}. Given the result of stochastic dominance, we provide a randomized algorithm such that the distribution of the reconstruction probability equals $\mu$ on trees of any depth, which is done in Section~\ref{sec:rec}.

\section{Reconstruction algorithm\label{sec:rec}}

We begin by introducing the notations we will be using throughout the proof.  In general, we will use $U,V\dots$ for random variables and $\mu,\nu$ for measures. To avoid complicated subscripts, we will use both $U$ and $\mu_{U}$ for the distribution of $U$ and use $f_{U}$ for its density (using delta functions for atoms). For any function $\phi$, we write $\phi\circ\mu$ for the distribution of $\phi(X)$, where $X$ is a random sample of $\mu$, denoted as $X\sim\mu$.  We will use $B\oplus C$ to denote the (measure of) the sum of two independent copies of $B$ and $C$, and $a\otimes B$ to denote the sum of $a$ i.i.d.\ copies of $B$. One should distinguish these two operators with $+$ and $\cdot$, the usual addition and scaler multiplication of measures. By definition, we have
\[
\mu_{B\oplus C}=\mu_{B}*\mu_{C},
\quad
\mu_{a\otimes B}=\underbrace{\mu_{B}*\mu_{B}*\cdots*\mu_{B}}_{a\text{ times}}.
\]
For any space $\Omega$, we will use $\mathcal{M}(\Omega)$ to denote
the space of probability measures on $\Omega$. A substantial portion
of our proof will be comparing different measures. For that sake,
we define the following partial order on $\mathcal{M}(\eRR)$, where
$\eRR \equiv \mathbb{R}\cup\{-\infty,\infty\}$ is the extended real numbers.
\begin{defn}[Stochastic dominance]
    For $\mu,\nu\in\mathcal{M}(\eRR)$, we say that \textbf{$\nu$ }\emph{stochastically dominates $\mu,$} denoted by $\mu\prec\nu$, if for any $x\in\eRR$, $\mu([-\infty,x])\ge\nu([-\infty,x]).$ Moreover, for any $\epsilon>0$, we say that \textbf{$\nu$} \emph{stochastically dominates $\mu$ by $\epsilon$}\textbf{, }denoted by $\mu\prec_{\epsilon}\nu$, if for any $x\in\eRR$, we have either $\mu([-\infty,x])=1$ or $\nu([-\infty,x])=0$ or $\mu([-\infty,x])-\epsilon\ge\nu([-\infty,x])$.
\end{defn}
The following proposition gives two sufficient conditions of stochastic dominance that will be used throughout the proof. The proof of proposition should be trivial.
\begin{prop} \label{prop:rec_stocfact}
Let $X,Y$ be two arbitrary independent random variables
\begin{enumerate}
\item If $\mu_{X},\mu_{Y}$ are absolutely continuous and $f_{X}(y)\le f_{Y}(y)$ for all $y$ satisfying $P(Y\ge y)>0$, then $X\succ Y$.
\item If $X$ stochastically dominates $Y$ by $\epsilon$, then for any random variable $X'$ such that $P(X\neq X')\le\epsilon$ and $\{x';P(X'<x')=0\}\subseteq\{y;P(Y<y)=0\}$, $X'$ also stochastically dominates $y$.
\end{enumerate}
\end{prop}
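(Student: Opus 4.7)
The plan is to unravel the definitions in each part; no substantive new ingredient should be needed beyond the setup.

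For (1), I would integrate the pointwise density bound. For any $x\in\eRR$, either $x$ is at or below the upper endpoint of $\supp\mu_Y$, in which case the hypothesis gives $f_X(y)\le f_Y(y)$ for every $y\le x$ and integrating yields
\[
P(X\le x)=\int_{-\infty}^{x}f_X(y)\,dy\;\le\;\int_{-\infty}^{x}f_Y(y)\,dy=P(Y\le x);
\]
or $x$ lies strictly above $\supp\mu_Y$, in which case $P(Y\le x)=1$ and the inequality is automatic. In both cases $\mu_Y([-\infty,x])\ge\mu_X([-\infty,x])$ for every $x$, which is precisely $X\succ Y$.

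For (2), I would verify $P(X'\le x)\le P(Y\le x)$ for each $x$ using the three-way split in the definition of $\prec_\epsilon$. Place $X$ and $X'$ on a common probability space realizing $P(X\neq X')\le\epsilon$; this supplies the basic coupling bound
\[
P(X'\le x)\;\le\;P(X\le x)+P(X\neq X')\;\le\;P(X\le x)+\epsilon.
\]
The hypothesis $Y\prec_\epsilon X$ now provides, for each fixed $x$, one of three alternatives: if $P(Y\le x)=1$ the desired inequality is immediate; if $P(Y\le x)\ge P(X\le x)+\epsilon$ the coupling bound settles it directly; and if $P(X\le x)=0$ then the coupling bound alone gives only $P(X'\le x)\le\epsilon$, which is insufficient when $P(Y\le x)$ is very small.

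The only mildly delicate step is this last case, and this is precisely what the support hypothesis is tailored to handle. I would translate the set inclusion involving $\{x':P(X'<x')=0\}$ and $\{y:P(Y<y)=0\}$ into a comparison of essential infima, and use it to argue that $X'$ places no mass in the region lying strictly below $\supp\mu_Y$; this forces $P(X'\le x)\le P(Y\le x)$ in the remaining case as well. Beyond this short verification I do not anticipate a real obstacle.
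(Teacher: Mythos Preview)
The paper gives no proof of this proposition, remarking only that it ``should be trivial.'' Your argument for Part~(1) is correct and is exactly the intended one-line verification.

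For Part~(2), your three-case split and coupling bound are the right framework, but your final step does not go through as written. You claim that the set inclusion $\{x':P(X'<x')=0\}\subseteq\{y:P(Y<y)=0\}$ lets you conclude that ``$X'$ places no mass in the region lying strictly below $\supp\mu_Y$.'' Translate the sets: $\{t:P(X'<t)=0\}=(-\infty,\operatorname{ess\,inf}X']$ and likewise for $Y$, so the inclusion says $\operatorname{ess\,inf}X'\le\operatorname{ess\,inf}Y$, the \emph{opposite} of what you need. With the inclusion as printed, the proposition is in fact false: take $X\equiv1$, $Y\equiv0$, $\epsilon=\tfrac12$, and $X'$ equal to $1$ or $-1$ each with probability $\tfrac12$; then $X\succ_\epsilon Y$, $P(X\neq X')=\tfrac12$, and $(-\infty,-1]\subseteq(-\infty,0]$, yet $P(X'\le-1)=\tfrac12>0=P(Y\le-1)$.

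So either you have silently reversed the inclusion, or (more likely) there is a typo in the paper and the inclusion is meant to read $\supseteq$. Your outline is the natural argument for the corrected statement, but you should flag the discrepancy rather than gloss over it; and even with the inclusion reversed, you should check the sub-case $0<P(Y\le x)<\epsilon$ explicitly, since ``$X'$ has no mass strictly below $\supp\mu_Y$'' only directly handles the sub-case $P(Y\le x)=0$.
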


\subsection{\texorpdfstring{$k$}{k}-colouring model and the tree recursion}

In this section we give the distributional recursion involved in the reconstruction problem.  Let $[k]=\{1,\dots,k\}$ denote the set of $k$-colours and $T=(V,E)\sim \mathcal{T}_\xi$ be an instance of the Galton-Watson tree of offspring distribution $\xi$ with root $\rho$.
For each $n\ge 1$, let $T_n=(V_n,E_n)$ denote the restriction of $T$ to its first $n$ levels and let $L_{n}$ be the leaves of $T_n$.
For each $n$, the $k$-colouring model restricted on $T_n$ is the uniform measure on the set of proper colourings 
$$
\Omega_n:=\set{
    \sigma\in[k]^{V_n}:\sigma_{u}\neq\sigma_{v},
    \textup{ for all } e=(u,v)\in E_n}.
$$  
And we will use $\Omega(L_{n})$ to denote the set of possible configurations on $L_{n}$.

For any $\eta\in\Omega(L_{n})$ and $l\in[k]$, let $f_n$ be the (deterministic) function defined as follows:
\[
f_{n}(l,\eta;T):=P(\sigma_{\rho}=l|T_n, \sigma_{L_{n}}=\eta).
\]
Given tree $T_n$ and the observed configuration $\eta\in\Omega(L_{n})$, the maximum likelihood estimator of $\sigma_{\rho}$ is the colour $l$ that achieves the maximum of $f_{n}(l,\eta;T)$, and this estimation is correct with probability $\max_{l}f_{n}(l,\eta;T)$. Let $d_\rho$ be the degree of the root $\rho$ of $T$, and $u_1,\dots,u_{d_\rho}$ be the $d_\rho$ offspring of the root $\rho$. For each $1\le i\le d_\rho$, let $T_i$ be the subtree rooted at $u_i$ and $L_{i}^{n}=L_{n}\cap T_{i}$ be the subset of $L_{n}$ restricted to $T_{i}$. Given the colour of $u_{i}$, the configuration on $T_{i}$ is independent of the configuration on $T\backslash T_{i}$.  A standard recursive calculation gives that, for each $\eta\in\Omega(L_{n})$
and $l\in[k]$,
\begin{equation}
    f_{n+1}(l,\eta;T)
    =\frac{\prod_{i=1}^{d_{\rho}}
        (1-f_{n}(l,\eta_{i},T_i)}
    {\sum_{m=1}^k \prod_{i=1}^{d_{\rho}}
        (1-f_{n}(m,\eta_{i};T_i))}
    .\label{eq:col_rec_treerec0}
\end{equation}

To study one step of the recursion from a vertex, one first samples the number of offspring from $\xi$ then decides the colour of each offspring accordingly.  Let $\Xi^l=\Xi^l(n;\xi)$ denote the distribution of $(T_n,\sigma_{L_{n}})$ given $\sigma_{\rho}=l$ and let $(T_n,\eta^{l})$ be a sample from $\Xi^l$.  Then the vector of posterior probability $\vX_{n}:=(f_{n}(1,\eta^{1};T),\dots,f_n(k,\eta^{1};T))$ is a random vector in the $k$-dimensional simplex $\Delta^{k}:=\{(x_{1},\dots,x_{n}):x_{i}\ge0,\sum_{i=1}^{n}x_{i}=1\}$.
Let $(T_i,\eta_{i}^{l})$ be the restriction of $(T_n,\eta^l)$ onto $T_{i}$. By the symmetry between branches of Galton-Watson trees and the symmetry between colours,
we have that
\[
(f_{n}(m,\eta^{l};T))_{m=1}^{k}\deq(X_{n}^{(m-l+1)})_{m=1}^{k},
\]
where we uses the notation $x^{(l)}$ to denote the $l$-th entry of vector $\vec{x}$, modulo $k$ when necessary.
Furthermore, conditioned on the value of $\vX_{n}^{(1)}$, $(\vX_{n}^{(2)},\dots,\vX_{n}^{(k)})$ are exchangeable. In particular $\vX_{n}^{(l)}\deq\vX_{n}^{(2)}$ for all $l\neq1$.

The distribution of $\vX_n$ can be solved recursively using the following $\Delta^k$-valued function $\Gamma$ that takes an indefinite number of variables: Let
\begin{equation}
\Gamma^{(m)}(\vec{x}_{i,l},l=1,\dots k,i=1,\dots b_{l})
:=\frac{
    \prod_{l=2}^{k} \prod_{i=1}^{b_{l}}
    (1-\vec{x}_{i,l}^{(m-l+1)})
}{
    \sum_{l'=1}^{k} \prod_{l=2}^{k}\prod_{i=1}^{b_{l}}
    (1-\vec{x}_{i,l}^{(l'-l+1)})}
    ,\ \forall m\in[k],
    \label{eq:col_rec_defGamma}
\end{equation}
where we adopt the convention of $\prod_{i\in\varnothing}a_{i}=1$.
Here $b_{l}$ represent the number of $u_i$'s with colour $l$. Given $d_\rho$ and $\sigma_{\rho}=1$, the joint distribution of $(b_{2},\dots,b_{k})$ follows the multinomial distribution with sum $d_{\rho}$ and probability $(\frac{1}{k-1},\dots,\frac{1}{k-1})$ and $b_1=0$.
Let $D_\rho,(B_1,\dots,B_k)$ be an i.i.d.\ copy of $d_\rho,(b_1,\dots,b_k)$ and $\vX_{i,l}$ be i.i.d.\ samples of $\vX_{n}$, (\ref{eq:col_rec_treerec0}) implies that
\begin{equation}
    \vX_{n+1}
    \deq
    \left(
    \frac{\prod_{l=2}^{k}\prod_{i=1}^{B_{l}}
        (1-\vX_{i,l}^{(m-l+1)})}
    {\sum_{m'=1}^{k}\prod_{l=2}^{k}\prod_{i=1}^{B_{l}}
        (1-\vX_{i,l}^{(m'-l+1)})}
    \right)_{m=1}^k
    =\Gamma(\vX_{i,l},l=1,\dots k,i=1,\dots B_{l})
    .\label{eq:col_rec_treerec}
\end{equation}
Let $\tilde{\Xi}$ be the distribution of $(T_n,\sigma_{L_n})$ without conditioning on the value of $\sigma_\rho$ and define the unconditional posterior probability $\tilde{X}_{n}:=(f_{n}(1,\tilde{\eta};T),\dots,f_{n}(k,\tilde{\eta},T))$ similarly, where $\tilde{\eta}$ is sampled from $\tilde{\Xi}$. The distribution of $\vX_{n}$ and $\tilde{X}_{n}$ satisfies that at each point $x\in\Delta^k$,
\begin{align}
P(\vX_{n}\in dx)
& = kP\left(
    \sigma_{\rho}=1,
    \big(P(\tau_{\rho}=j\mid T_n,\tau_{L_{n}}=\sigma_{L_{n}})
    \big)_{j=1}^{k}\in dx
\right)\nonumber \\
 & =kP(\tilde{X}_{n}\in dx)
    P(\sigma_{\rho}=1\mid
    \big(
        P(\tau_{\rho}=j\mid T_n, \tau_{L_{n}}=\sigma_{L_{n}})
    \big)_{j=1}^{k}\in dx)
\nonumber \\
 & =kx^{(1)}P(\tilde{X}_{n}\in dx)
.\label{eq:col_rec_nutonuplus}
\end{align}

Equation (\ref{eq:col_rec_treerec}) and (\ref{eq:col_rec_nutonuplus}) are all we need to describe the distributional recursion. To be more concrete, we introduce some further notations.
Let $\mathcal{M}_{s}(\Delta^{k})\subset\mathcal{M}(\Delta^{k})$ be the subset of measures in $\mathcal{M}(\Delta^{k})$ that are invariant under permutations of the coordinates. With some abuse of notation, we will also use $\Gamma$ for the transformation it induces on $\mathcal{M}(\Delta^{k})$, i.e.\ for any $\nu\in\mathcal{M}(\Delta^{k})$, we define $\Gamma\nu$ as the distribution of $\Gamma(\vX_{i,l},l=1,\dots,k,i=1,\dots B_{l})$ where $\vX_{i,l}$ are i.i.d.\ copies with distribution $\nu$ and $B_l$ are defined as before.
For each $\nu\in\mathcal{M}_{s}(\Delta^{k})$, let $\Pi_l\nu$ be defined as  $(\Pi_l\nu)(dx):=kx^{(l)}\nu(dx)$ and define 
\begin{equation}
    \Gamma_{s}\nu
    :=\frac{1}{k}\sum_{l=1}^{k}
        (\Gamma\circ\Pi_l)\nu
.\label{eq:defGammas}
\end{equation} 
Under these notations, if $\tilde{X}_{n}\sim\nu$, then
$
\vX_{n}\sim\Pi_1\nu,\
\vX_{n+1}\sim\Gamma\circ \Pi_1\nu
\textup{ and }\tilde{X}_{n+1}\sim\Gamma_{s}\nu.
$

It is easy to check that $\delta_{(\frac{1}{k},\dots,\frac{1}{k})}$ is a trivial fixed point of $\Gamma_{s}$, which corresponds to no information about the root. To show reconstruction, it is enough to prove for $\tilde{X}_{0}\sim\mu_{0}:=\frac{1}{k}[\delta_{(1,0,\dots,0)}+\cdots+\delta_{(0,\dots,0,1)}]$ that $\Gamma_{s}^{n}\mu_{0}$ is weakly bounded away from $\delta_{(\frac{1}{k},\dots,\frac{1}{k})}$.  One of the main difficulties for analyzing graph colourings is that the dimension of the recursion  grows linearly in $k$. Luckily, as it will become clear in the proof, it is sufficient to consider only the largest coordinate of $\tilde{X}_{n}$. All the other entries are w.h.p.\ negligible as $k\to\infty$. Since we are not aiming at the tightest possible bound, we shall discard this extra information  reducing the recursion to $\mathbb{R}$.

Define $\lambda(\vec{x})=(\lambda^{(0)},\lambda^{(1)})(\vec{x}):=(\|\vec{x}\|_{\infty},\arg\max\vec{x})$
and $\Lambda:\Delta^{k}\to\Delta^{k}$ to be
\begin{equation}
\Lambda^{(m)}(\vec{x})=\begin{cases}
\|\vec{x}\|_{\infty} & m=\arg\max\|\vec{x}\|_{\infty}\\
\frac{1-\|\vec{x}\|_{\infty}}{k-1} & \textup{otherwise}
\end{cases}.
\label{eq:Lambda}
\end{equation}
We are mostly interested in the transformation $\lambda$ and $\Lambda$ induces on spaces of probability measures. With some abuse of notation, we allow extra randomness to be used to break ties in the $\arg\max$ of $\lambda$ and $\Lambda$ independently and uniformly randomly. For example if
$X=(\frac{1}{2},\frac{1}{2},0,\dots,0)$ with probability $1$, then $\lambda(X)$ equals $(\frac{1}{2},1)$ or $(\frac{1}{2},2)$ with probability $\frac{1}{2}$. Let $\Lambda^{k}=\Lambda(\Delta^{k})\subset\Delta^{k}$ be the ``star-shaped'' image of $\Lambda$, $\lambda(\vec{x})$ gives a bijection between $\Lambda^{k}\backslash(\frac{1}{k},\frac{1}{k},\dots,\frac{1}{k})$ and $(\frac{1}{k},1]\times[k]$. Hence there is a bijection between
$\mathcal{M}([\frac{1}{k},1])$ and $\mathcal{M}_{s}(\Lambda^{k}):=\mathcal{M}_{s}(\Delta^{k})\cap\mathcal{M}(\Lambda^{k})$
given by:
\begin{align*}
\lambda^{(0)}:\mathcal{M}_{s}(\Lambda^{k})\to\mathcal{M}([\frac{1}{k},1]),
\quad & \mu\to\lambda^{(0)}\circ\mu=\|\mu\|_{\infty};\\
\lambda^{-1}:\mathcal{M}([\frac{1}{k},1])\to\mathcal{M}_{s}(\Lambda^{k}),
\quad & \mu\to\lambda^{-1}\circ\left(\mu\otimes\frac{1}{k}(\delta_{1}+\cdots+\delta_{k})\right).
\end{align*}
Thus $\Lambda\circ\Gamma_{s}$ induces a transformation on $\mathcal{M}_{s}(\Lambda^{k})$ and $\lambda^{(0)}\circ\Lambda\circ\Gamma_{s}\circ\lambda^{-1}=\|\Gamma_{s}\circ\lambda^{-1}\|_{\infty}$ induces a transformation on $\mathcal{M}([\frac{1}{k},1])$. With another abuse of notation, we will use the same notation for both $\mu\in\mathcal{M}([\frac{1}{k},1])$
and its unique correspondence in $\mathcal{M}_{s}(\Lambda_{k})$ and use $\Lambda\circ\Gamma_{s}$ for both transformations. Also for $\mu,\nu\in\mathcal{M}_{s}(\Lambda_{k})$,
we say $\mu\prec\nu$ iff $\mu\prec\nu$ as elements of $\mathcal{M}([\frac{1}{k},1])$.

The main technical result of this paper is the following theorem, which
will be proved in Section~\ref{sec:col_stoc}.
\begin{thm}
\label{thm:col_rec_stocdom}
There exist $\beta^{0}<1,c>0$ such that for any $k>k_{0}$,
$
    d \ge k(\log k +\log\log k + \beta^0), 
$
and $T\sim\mathcal{T}_{\pois(d)}$,
one can constructs $\mu_{k}\in\mathcal{M}([\frac{1}{k},1])$
such that $(\Lambda\circ\Gamma_{s})\mu_{k}$ stochastically dominates
$\mu_{k}$ by $c/\log k$.
\end{thm}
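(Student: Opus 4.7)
The plan is to construct $\mu_k$ explicitly as a mixture of a ``frozen'' atom at $1$ and a small density concentrated near $1$, then verify the one-step domination by Poisson thinning and a direct CDF comparison. Since the regime $\beta^0 < 1$ lies just below the freezing threshold, the natural candidate is
$$\mu_k = p_k\,\delta_1 + (1-p_k)\,\rho_k,$$
where $p_k = 1-\Theta(1/\log k)$ and $\rho_k$ is a probability measure on $[1-\eta_k, 1)$ for some $\eta_k = O(1/\log k)$ whose precise shape will be read off from the recursion. Viewed through $\lambda^{-1}$ as an element of $\mathcal{M}_s(\Lambda^k)$, a sample from $\mu_k$ is a star-shaped vector with coordinate value $x\sim\mu_k$ and independent uniform argmax position in $[k]$.

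First I would reduce to a Poisson-thinning picture. Under the tilt $\Pi_1\mu_k$, the argmax of a star-shaped sample equals $1$ with conditional probability $x$ given $x$. Coupled with $d_\rho\sim\pois(d)$ children whose true colors are i.i.d.\ uniform in $\{2,\dots,k\}$, the entire collection of children splits into independent Poisson streams indexed by the triple (true color, standardized argmax, value). In particular, the number of \emph{correctly-pointed} children of a fixed color $m$ -- those with $l_i=m$ and $a_{i,m}=1$ -- is $\pois(d\bar{x}/(k-1))$ where $\bar{x}=\int x\,d\mu_k \ge 1-O(\eta_k)$. With $d/(k-1) = \log k + \log\log k + \beta^0 + o_k(1)$, a union bound then gives
$$P(E_\ast^c) = \Theta(1/\log k),\qquad E_\ast := \{\text{every } m\in\{2,\dots,k\}\ \text{has at least one correctly-pointed child}\}.$$

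The core calculation concerns the conditional law of $\Gamma^{(1)}$ on $E_\ast$. A correctly-pointed child of color $m$ contributes a factor $1-x_{i,m}\le\eta_k$ (exactly $0$ when it lies on the atom at $1$) to $N_m$, while every child contributes a factor close to $1$ to $N_1$; thus on $E_\ast$ the ratio $\sum_{m\neq 1} N_m/N_1$ is small and $\Gamma^{(1)}$ is close to $1$. Denote by $F\subset E_\ast$ the sub-event that every $m\neq 1$ is blocked by some frozen correctly-pointed child, so that $\Gamma^{(1)}=1$ exactly on $F$. I would then choose $\rho_k$ so that its law matches, to within $o(1/\log k)$ in an appropriate metric, the conditional law of $\Gamma^{(1)}$ given $E_\ast\setminus F$; this makes the recursion approximately self-consistent, with the new atom at $1$ of mass $P(F)$ and the new continuous part agreeing with $\rho_k$ up to the leakage from $E_\ast^c$.

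For the quantitative stochastic domination, I would bound the CDF of $(\Lambda\circ\Gamma_s)\mu_k$ at any $x<1$ by $P(E_\ast^c) + P(E_\ast,\,\Gamma^{(1)}\le x)$ and compare it against $(1-p_k)\rho_k([\tfrac{1}{k},x])$, the CDF of $\mu_k$ at $x$. With the correct choice of $p_k$ and $\rho_k$, careful bookkeeping of $P(F)$, $P(E_\ast^c)$, and the leading-order behaviour of $\Gamma^{(1)}$ on $E_\ast\setminus F$ produces the required $c/\log k$ gap. The main obstacle I anticipate is the precise choice of $\rho_k$: the relevant operator on densities is not manifestly contractive, so the argument may need to parametrise $\rho_k$ within a carefully chosen family and either iterate to a monotone limit or invoke a fixed-point theorem, all while tracking $1/\log k$-order corrections in the Poisson parameter $\bar{x}$ and in $P(F)$ with enough precision to ensure the margin has the correct sign.
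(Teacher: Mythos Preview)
Your construction has a structural obstruction that makes it fail precisely in the regime $\beta^0<1$ you are aiming for. Because you place an atom of mass $p_k$ at $1$, the stochastic-dominance-by-$c/\log k$ requirement, evaluated at $x\uparrow 1$, forces
\[
P(F)\;\ge\;p_k+\frac{c}{\log k},
\]
where $F$ is your event that every colour $m\neq 1$ is blocked by a frozen correctly-pointed child. But $P(F)$ is governed by the freezing recursion: with $q:=1-p_k=C/\log k$ and $D=d/(k-1)=\log k+\log\log k+\beta+o(1)$, the number of frozen correctly-pointed children of colour $m$ is $\pois(Dp_k)$, so
\[
1-P(F)\;=\;(1+o(1))\,(k-1)e^{-Dp_k}\;=\;(1+o(1))\,\frac{e^{C-\beta}}{\log k}.
\]
Thus $P(F)\ge p_k+c/\log k$ becomes $e^{C-\beta}\le C-c$, i.e.\ $(C-c)e^{-C}\ge e^{-\beta}$. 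The left side is at most $e^{-1}$, so this forces $\beta>1$: your atom at $1$ reproduces exactly the freezing fixed-point equation, and below the freezing threshold the atom \emph{shrinks} under one step, violating the required CDF inequality just below $1$. The continuous piece $\rho_k$ on $[1-\eta_k,1)$ cannot repair this, since it contributes nothing to the output mass at exactly $1$.

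The paper's $\mu_k$ avoids this trap by having \emph{no} atom at $1$. In the transformed coordinate $y=\phi(x)=\log\!\big[(1-\tfrac{1-x}{k-1})/(1-x)\big]$ it is (up to scaling by $1/\log k$ and truncation) a mixture of an atom at $0$ (i.e.\ at $x=1/k$, ``no information''), an atom near $\log 2$ (i.e.\ at $x\approx 1/2$), and a heavy continuous tail $\gamma y^{-2}e^{\delta y}\,dy$ on $[M,\infty)$. The analysis then proceeds entirely in $y$-coordinates: one writes $Z_m=\sum\phi(Y_i)$ as the sum over a Poisson point process, bounds its law via convolution estimates, shows that $\sum_{m\ge 2}e^{-Z_m}$ behaves like a $\delta$-stable variable (Lemma~\ref{lem:col_sumzm_stablelaw}), and finally checks that the heavy tail is approximately self-reproducing while the leakage below $M$ is compensated by mass coming from the atom near $\log 2$. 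None of this is close to the frozen/near-frozen ansatz; the mechanism producing the $c/\log k$ gap is the interplay between the stable-law tail and the discrete atom at $\alpha$, not proximity to the freezing fixed point.
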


Using the fact that $\|\Lambda(\vec{x})\|_{\infty}=\|\vec{x}\|_{\infty}$, Theorem~\ref{thm:col_rec_stocdom} is equivalent to the statement that $\|\Gamma_{s}\mu_{k}\|_{\infty}$ stochastically dominates $\mu_{k}$ by $c/\log k$. It follows that if at some level we can reconstruct the root with success probability $\|\tilde{X}_{n}\|_{\infty}$ for some $\tilde{X}_{n}\sim\mu_{k}\in\mathcal{M}_{s}(\Lambda^{k})$,
then in the level above we can do strictly better with success probability $\|\tilde{X}_{n+1}\|_{\infty}\succ\|\tilde{X}_{n}\|_{\infty}$. However this does not directly imply reconstruction due to two reasons.
First, the proof of Theorem~\ref{thm:col_rec_stocdom} depends heavily on the low-dimensional structure of $\mu_{k}\in\mathcal{M}_{s}(\Lambda^{k})$, but in general after one step $\Gamma_{s}\mu_{k}$ no longer belongs to $\mathcal{M}_{s}(\Lambda_{k})$.
Secondly, due to the non-linearity of $\Lambda\circ\Gamma_{s}$, it is not clear whether $(\Lambda\circ\Gamma_{s})\mu_{k}\succ\mu_{k}$ would imply $(\Lambda\circ\Gamma_{s})^{2}\mu_{k}\succ(\Lambda\circ\Gamma_{s})\mu_{k}$.
We address both problems in next subsection by intentionally manipulating the observed configuration and thus manually maintaining a nontrivial fixed point for the ``manipulated recursion''.

\subsection{Manipulating the tree recursions}
In this section we provide a reconstruction algorithm such that its estimator of $\sigma_\rho$ satisfies a modified recursion with the fixed point $\mu_k$ defined in Theorem~\ref{thm:col_rec_stocdom}.
Let $\SS_k$ be the symmetric group of degree $k$. For any $\pi\in \SS_k$, $\eta\in\Omega(L_{n})$ and $X\in \Delta^k$, define $\pi\circ\eta\in\Omega(L_{n})$ to be the configuration specified by $(\pi\circ\eta)_{v}=\pi(\eta_{v})$ and $\pi\circ X\in \Delta_k$ to be the vector with $(\pi \circ X)^{(l)} = X^{(\pi(l))}$.
We first illustrate the main idea with an example:

Suppose that two people, Alice and Bob, are trying to reconstruct $\sigma_\rho$, the colour of the root, from $\sigma_{L_n}$. 
Observing $T$ and  $\sigma_{L_n} = \eta\in\Omega(L_{n})$, Bob knows that root $\rho$ has colour $l$ with probability $f_{n}(l,\eta;T)$. Then Alice tells Bob that the $\eta$ he observed was not the actual $\sigma_{L_n}$, but the $\sigma_{L_n}$ after a randomly selected permutation $\pi$. Namely, $\eta = \pi\circ \sigma_{L_n}$ where $\pi$ is sampled from some distribution $\nu\in \MM(\SS_k)$.
Let $F(\eta):= (f_n(\ell,\eta;T))_{l=1}^k\in\Delta^k$ be the original estimator of the root with $T$ omitted for brevity. Bob's estimation of $\sigma_\rho$ after Alice's permutation becomes 
\begin{align*}
F(\eta;\nu)
&:=
\Big(
    P_{\pi\sim\nu}
    (\sigma_{\rho} = l \mid \pi\circ\sigma_{L_n} = \eta)
\Big)_{l=1}^k
=\sum_{\pi\in \SS_{k}}\nu(\pi)F(\pi^{-1}\circ\eta)
=\sum_{\pi\in \SS_{k}}\nu(\pi)(\pi\circ F)(\eta).
\end{align*}
Thus if Alice chooses the distribution $\nu$ carefully, she can manipulate Bob's estimation to any vector in the convex hull of $\big\{(\pi\circ F)(\eta):\pi\in \SS_k\big\}$. And that's essentially what we will do in this section.
In particular, we consider the following two families of $\nu\in\MM(\SS_{k})$:
\begin{enumerate}
\item For each $l\in[k]$, let $\nu_{1}(l)$ be the uniform distribution on $\SS_{[k]\setminus l } := \set{\pi\in \SS_k: \pi_l=l}$. For any $\eta\in\Omega(L_{n})$
and $m\in[k]$, 
\begin{equation}
    F^{(m)}(\eta;\nu_1(l))
=\begin{cases}
    f_{n}(m,\eta) & m=l\\
    \frac{1}{k-1}\sum_{m\neq l}f_{n}(m,\eta) & m\ne l
\end{cases}
=\begin{cases}
    f_{n}(m,\eta) & m=l\\
    \frac{1}{k-1}\left(1-f_{n}(m,\eta)\right) & m\ne l
\end{cases}.
\label{eq:rec_defnu1}
\end{equation}

\item For each $p\in[0,1]$, let $\nu_{2}(p):=p\nu_{\mathrm{unif}}+(1-p)\delta_{\mathrm{id}}$
where $\nu_{\mathrm{unif}}$ is the uniform distribution on $\SS_{k}$
and $\delta_{\mathrm{id}}$ is the point mass at the identity permutation $\mathrm{id}$.
For any $\eta\in\Omega(L_{n})$,
\begin{equation}
    F(\eta;\nu_2(p))
=(1-p)F(\eta)
+\frac{p}{k!}\sum_{\pi\in \SS_k}(\pi\circ F)(\eta)
=(1-p)F(\eta)
+p\cdot \Big(\frac{1}{k},\dots,\frac{1}{k}\Big)
.\label{eq:rec_defnu2}
\end{equation}
\end{enumerate}
In the proof, we will use $\nu_1(l)$ to simulate the transformation $\Lambda$ defined in \eqref{eq:Lambda} and $\nu_2(p)$ to reduce the distribution $(\Lambda\circ\Gamma_s) \mu_k$ to $\mu_k$. For the later purpose, we show the following lemma.
\begin{lem} \label{lem:col_rec_reduce}
For any $\mu_{1},\mu_{2}\in\mathcal{M}([\frac{1}{k},1])$ such that $\mu_{1}\succ\mu_{2}$, there exist function $q:[\frac{1}{k},1]\times[0,1]\to[\frac{1}{k},1]$, such that $q(y,u)\le y$ for all $y\in[\frac{1}{k},1]$,$u\in[0,1]$ and for any independent random variables $Y\sim\mu_{1}$ and $U\sim\mathrm{Unif}[0,1]$,
$q(Y,U)\sim\mu_{2}$. We say that such function $q$ reduces $\mu_{1}$
to $\mu_{2}$.
\end{lem}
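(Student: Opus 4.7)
The plan is to implement the classical monotone (quantile) coupling between $\mu_1$ and $\mu_2$, with the auxiliary variable $U$ used solely to smear out the atoms of $\mu_1$. To set up, I would write $F_j(x) := \mu_j([\tfrac{1}{k}, x])$ for the CDFs and $F_j^{-1}(t) := \inf\{x\in[\tfrac{1}{k},1] : F_j(x) \ge t\}$ for the left-continuous quantile functions ($j = 1, 2$), and rewrite the hypothesis $\mu_1 \succ \mu_2$ in the equivalent form $F_1(x) \le F_2(x)$ for every $x \in [\tfrac{1}{k}, 1]$.

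The reducing function is then
\[
q(y, u) := F_2^{-1}\bigl(F_1(y-) + u\,(F_1(y) - F_1(y-))\bigr).
\]
Informally: given $y$, first produce a point $V(y,u) \in [F_1(y-), F_1(y)]$ by linear interpolation in $u$, and then push it through the quantile of $\mu_2$. Monotonicity is immediate: $V(y,u) \le F_1(y) \le F_2(y)$ gives $q(y,u) \le F_2^{-1}(F_2(y)) \le y$ for every $(y,u)$. For the distributional claim, with $Y \sim \mu_1$ and $U \sim \mathrm{Unif}[0,1]$ independent, a short case-by-case inverse-transform check (splitting according to whether $Y < y_t$, $Y > y_t$, or $Y = y_t$, with $y_t := F_1^{-1}(t)$) shows $P(V(Y,U) \le t) = t$ for every $t \in [0, 1]$, so $V(Y,U) \sim \mathrm{Unif}[0, 1]$ and hence $q(Y,U) = F_2^{-1}(V(Y,U)) \sim \mu_2$.

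The only real conceptual point, and the reason the randomness $U$ appears in the statement at all, is to handle the atoms of $\mu_1$: at an atom $y_0$ one has $F_1(y_0-) < F_1(y_0)$, and without the linear interpolation in $u$ the image $F_1(Y)$ would carry a point mass at $F_1(y_0)$, so $F_2^{-1}(F_1(Y))$ would not in general be distributed as $\mu_2$. Since the measure $\mu_k$ provided by Theorem~\ref{thm:col_rec_stocdom} can a priori carry atoms (for instance inherited from the initial condition $\mu_0$), this smoothing step is genuinely needed. Beyond that no substantive obstacle arises; the verification is otherwise entirely routine.
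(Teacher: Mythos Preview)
Your construction is exactly the one in the paper: the same quantile-coupling formula $q(y,u)=F_2^{-1}\bigl(F_1(y-)+u(F_1(y)-F_1(y-))\bigr)$, with the same reason for $q(y,u)\le y$ via $F_1\le F_2$. The only cosmetic difference is that you factor the distributional check through the intermediate claim that $V(Y,U)$ is $\mathrm{Unif}[0,1]$, whereas the paper computes $P(q(Y,U)\le x)=F_2(x)$ directly; these are the same argument.
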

\begin{proof}
Let $G_{1}$, $G_{2}$ be the c.d.f.\ of $\mu_{1},\mu_{2}$, and
$G_{1}(x-0)$ be the left limit of $G_{1}$ at $x$. For $y\ge\frac{1}{k}$,
define
\[
q(y,u):=\inf\Big\{x\ge\frac{1}{k}:G_{2}(x)\ge G_{1}(y-0)+u(G_{1}(y)-G_{1}(y-0))\Big\}.
\]
Note that $\mu_{1}\succ\mu_{2}$ implies that $G_{2}(y)\ge G_{1}(y)$
for all $y\ge\frac{1}{k}$. Hence $q(y,u)\in[\frac{1}{k},y]$. Let
$y_{x}=\sup\{y:G_{1}(y-0)\le G_{2}(x)\}$. A direct calculation shows
that for $x\ge\frac{1}{k}$,
\begin{align*}
P(q(Y,U)\le x) & =P(G_{2}(x)\ge G_{1}(Y-0)+U(G_{1}(Y)-G_{1}(Y-0)))\\
 & =G_{1}(y_{x}-0)+(G_{1}(y_{x})-G_{1}(y_{x}-0))\frac{G_{2}(x)-G_{1}(y_{x}-0)}{G_{1}(y_{x})-G_{1}(y_{x}-0)}=G_{2}(x).
\end{align*}
\end{proof}
Recalling the 1-to-1 correspondence between $\MM(\Lambda^k)$ and $\MM([\frac{1}{k},1])$, we define $q_0$ to be the function that reduces $\mu_{0}=
\frac{1}{k}(\delta_{(1,0,\dots0)}+\cdots+\delta_{(0,\dots,0,1)})$ to $\mu_k$ and $q_\star$ to be the function that reduces $(\Lambda\circ \Gamma_s) \mu_k$ to $\mu_k$, where the later one exists because $(\Lambda\circ \Gamma_s) \mu_k \succ \mu_k$. We further define for each $\bullet\in\set{0,\star}$ that 
\begin{equation}
    \tq_{\bullet}(y,u) := \frac{ky - q_\bullet(y,u)}{ky-1}
    \in[0,1]
    \quad \textup{such that }\ 
    (1-\tq_\bullet(y,u)) \cdot y 
        + \tq_\bullet(y,u) \cdot \frac{1}{k} 
    = q_\bullet(y,u).
\label{eq:tq}
\end{equation}

Let us introduce further notations necessary for the algorithm:
Let $\iU:=(U_v)_{v\in T}$ be an array of independent $\mathrm{Unif}[0,1]$ random variables indexed by the vertices of $T$ and let 
$\iU_v :=(U_w)_{w\in T_v}$ be the sub-array indexed over $T_v$, 
the subtree rooted at $v$. 
For each $v\in T$ and $w\in T_v$, we will encode Alice's action on $T_v$ and Bob's information at $w$ after Alice's actions on $T_v$ as 
$$
\ia_v:=(p_v,l_v,\pi_v)\in [0,1]\times [k]\times \SS_k
\quad\textup{and}\quad
\ib_{w,v}:=(p_{w,v},\eta_{w,v})\in [0,1]\times [k].
$$
Let $\iA_v$ and $\iB_v$ be arrays of $\ia_w$ and $\ib_{w,v}$ indexed over $w\in T_v$ respectively. Letting $L^v_1$ denote the set of offspring of $v$, we define $\iB_{L^v_1}:= (\ib_{w,u})_{u\in L^v_1,w\in T_u}$ as the concatenation of $(\iB_u)_{u\in L^v_1}$ for each $v\notin L_n$ and define $\iB_{L^v_1} := (\sigma_v)$ otherwise. With the meaning of $\ia_v$ and $\ib_{w,v}$ to be given in a moment, we formally define
\begin{align*}
    \iP^\circ_{v} 
    &:= 
    \iP^\circ_{v} (\iB_{L^v_1}) =
    \begin{cases}
    (P(\sigma_v=l\mid \sigma_v))_{l=1}^k 
    &  v\in L_n.
    \\
    (
        P(\sigma_{v} = l \mid 
            \iB_{L^v_1}
        )
    )_{l=1}^k
    & v \notin L_n.
    \end{cases}
    ,\quad
    \iP_{v} := 
    \iP_{v}(\iB_v) = (
        P(\sigma_{v} = l \mid 
        \iB_{v}
        )
    )_{l=1}^k
    ,
\end{align*}
as Bob's belief on $\sigma_v$ before and after Alice's actions on $T_v$ (if he is given $\iB_{L_1^v}$ or $\iB_v$ respectively).

We now define the actions of Alice, namely what $\ia_v,\ib_v$ means and how she recursively constructs them from the leaves up to the root as a function of $T_v$, $\sigma_{T_v\cap L_n}$ and $\iU_v$:
\begin{enumerate}
\item For each leaf vertex $v \in L_n$, $T_v = \set{v}$. Bob's belief before Alice's action is simply
\[
    \iP^\circ_{v} 
    = (P(\sigma_v=l\mid \sigma_v))_{l=1}^k 
    = (\Ind{\sigma_v=l})_{l=1}^k
    .
\]
Alice then sets $l_v = \sigma_v$, $p_v = \tq_0(1,U_v)$ and $\pi_v = \pi^2_v\circ \pi^1_v$, where $\pi^1_v$ is a sample of $\nu_1(l_v)$ and $\pi^2_v$ is an independent sample of $\nu_2(p_v)$.
Finally, she permute $\sigma_v$ by $\pi_v$ (which has the same effect as using $\pi^2_v$) and prepares Bob's share of information as $\iB_{v} = (\ib_{v,v})$, where
\[
    \ib_{v,v} = (q_{v,v}, \eta_{v,v}) 
    =(p_v,\pi^2_v(l_v))
    =(p_v,\pi^2_v(\sigma_v))
    .
\]
\item  Suppose that for each $w\in L_{m+1}$, Alice has recorded her actions on $T_w$ as $\iA_w$  and prepared the information for Bob as $\iB_w$, where $\iA_w$ is a function of $(T_w, \sigma_{T_w\cap L_n}, \iU_w)$ and $\iB_w$ is a function of $\iA_w$. We now describe Alice's actions on $T_v$, namely how she constructs $\iA_v$ and $\iB_v$ for each $v\in L_m$ as a function of  $(\iB_u)_{u\in L^v_1}$ and $U_v$.
\begin{enumerate}
    \item First, for each $u\in L^v_1$, Alice calculates $\iP_u$, namely Bob's belief of $\sigma_{u}$ given information $\iB_{u}$.
Given $(\iP_u)_{u\in L^v_1}$, Alice calculates Bob's belief of $\sigma_v$ before her actions on $T_v$.
Following a similar recursion of \eqref{eq:col_rec_treerec0},
\[
    \iP^\circ_{v}
    =
    \Bigg(
    \frac{\prod_{u\in L^v_1}
        (1-\iP_{u}^{(l)})}
    {\sum_{m=1}^k \prod_{u\in L^v_1}
        (1-\iP_{u}^{(m)})}
    \Bigg)_{l=1}^k
    .
\]
\item  Let $U^i_v,i=1,2,3$ be three independent $\textup{Unif}[0,1]$ random variables constructed from $U_v$. Let $l_v=l_v(\iP^\circ_{v},U^1_v)$ be uniformly picked from $\set{l: (\iP^\circ_{v})^{(l)}=\|\iP^\circ_{v}\|_\infty}$, the set of largest coordinates of $\iP^\circ_{v}$, using the randomness of $U^1_v$ and let $p_v = \tq_\star(\|\iP^\circ_{v}\|_\infty, U^2_v)$.
Alice then uses the randomness $U^3_v$ to sample $\pi^1_v$ from $\nu_1(l_v)$ and $\pi^2_v$ from $\nu_2(p_v)$ independently and sets $\pi_v = \pi^2_v\circ\pi^1_v$. This gives $\ia_v = (p_v,l_v,\pi_v)$ and completes the construction of $\iA_v$.
\item Finally, Alice ``permutes'' Bob's current observation of ${T_v\cap L_n}$ and all the previous information she prepares for Bob by $\pi_v$. This, in the language of $\iA_v$ and $\iB_v$, corresponds to setting $q_{v,v} = p_v$, $\eta_{v,v} = \pi^2_v(l_v)$ and setting for each $w\in T_v\backslash\set{v}$ that
$q_{w,v} = p_w$ and 
$$
    \eta_{w,v} 
    = \pi_v (\eta_{w,w_1})
    = \pi_{w_0}(\pi_{w_1}( \cdots \pi_{w_{r-1}}(\pi^2_{w}(l_w))\cdots)\cdots),$$
where $w_0=v,w_1\in L^v_1,\dots, w_{r-1}, w_{r} = w$ is the unique path connecting $v$ to $w$. 
This completes the definition of $\iB_v=(\ib_{w,v})_{w\in T_v}$.
\end{enumerate}
\item As a final step, Alice tells Bob the array $\iB_\rho$ as partial information of her actions, which in particular includes Bob's final observation as $(\eta_{v,\rho})_{v\in L_n}$.
We emphasis that $\iB_\rho$ is the only piece of information given to Bob. All the intermediate $\iB_v$'s exist only in Alice's deduction and remain unknown to Bob.
\end{enumerate}
The main result of the section is the following Theorem.
\begin{thm}\label{thm:AliceBob}
For any $n\ge 1$, let $T$ be a $n$-level tree sampled from $\mathcal{T}_\pois$ and $\sigma_{L_n}$ be generated by the colouring model on $T$. Let $\iU$ be a $T$-indexed array of independent $\mathrm{Unif}[0,1]$ random variables. 
If Alice performs her actions as described above, then Bob's final belief of $\sigma_\rho$ after all Alice's actions, represented as
\[
    \iP_\rho = \iP_\rho(\iB_\rho) 
    =(P(\sigma_{\rho} = l \mid 
        \iB_{\rho}
        ))_{l=1}^k
    \in\Delta^k
    ,
\]
follows the distribution of $\mu_k$.
\end{thm}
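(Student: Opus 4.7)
My plan is to prove this by induction on the height of $v$ in $T_n$, maintaining the strengthened inductive hypothesis that $\iP_v \sim \mu_k$ as an element of $\MM_s(\Lambda^k)$. Since $\iP_v$ is defined as Bob's Bayesian posterior $(P(\sigma_v = l \mid \iB_v))_{l=1}^k$, this hypothesis automatically implies, via the identity $P(\sigma_v = l \mid \iP_v) = \iP_v^{(l)}$, that the conditional law $(\iP_v \mid \sigma_v = l) \sim \Pi_l \mu_k$---which is precisely the input needed to apply the tree recursion $\Gamma_s$ at the parent of $v$.

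At a leaf $v$, $\iP^\circ_v = (\Ind{\sigma_v = l})_{l=1}^k$ is degenerate, so $\pi^1_v$ trivially fixes $l_v = \sigma_v$ and the only substantive action is $\pi^2_v \sim \nu_2(p_v)$ with $p_v = \tq_0(1, U_v)$. A direct Bayes computation from $\ib_{v,v} = (p_v, \pi^2_v(\sigma_v))$ shows that $\iP_v \in \Lambda^k$ has its maximum at $\eta_{v,v}$ of value $1 - p_v(k-1)/k$, which equals $q_0(1, U_v)$ by~\eqref{eq:tq}. Since $q_0$ reduces $\mu_0$ (whose infinity norm is concentrated at $1$) to $\mu_k$ by Lemma~\ref{lem:col_rec_reduce}, we obtain $\|\iP_v\|_\infty \sim \mu_k$; combined with $\eta_{v,v}$ being uniform and independent of $\|\iP_v\|_\infty$, this gives $\iP_v \sim \mu_k$ via the bijection $\lambda^{-1}$.

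For an internal $v$, assume the hypothesis for every child $u \in L^v_1$. The conditional independence of the subtrees given $\sigma_v$, together with $(\iP_u \mid \sigma_u = l) \sim \Pi_l \mu_k$, feeds into the tree recursion~\eqref{eq:col_rec_treerec0}--\eqref{eq:col_rec_treerec} to yield $\iP^\circ_v \sim \Gamma_s \mu_k$ unconditionally and $\iP^\circ_v \mid \sigma_v = 1 \sim \Gamma \circ \Pi_1 \mu_k$. The crucial observation is that conditional on $\iP^\circ_v$, the permutation $\pi_v = \pi^2_v \circ \pi^1_v$ is independent of $\sigma_v$, since it depends on $\iB_{L^v_1}$ only through $\iP^\circ_v$ and on the auxiliary randomness $U_v$. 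This allows the averaging formula $F(\eta; \nu) = \sum_\pi \nu(\pi)(\pi \circ F)(\eta)$ to be applied conditionally: averaging over $\pi^1_v \sim \nu_1(l_v)$ preserves the $l_v$-th coordinate of Alice's posterior and equalises all others, realising exactly the map $\Lambda$; averaging over $\pi^2_v \sim \nu_2(p_v)$ then mixes the result with $(1/k, \ldots, 1/k)$ of weight $p_v$. The outcome is $\iP_v \in \Lambda^k$ with its maximum at a label determined by $\iB_v$ of value $(1-p_v)\|\iP^\circ_v\|_\infty + p_v/k = q_\star(\|\iP^\circ_v\|_\infty, U^2_v)$.

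Since $\|\iP^\circ_v\|_\infty \sim (\Lambda \circ \Gamma_s)\mu_k$ and $q_\star$ reduces this to $\mu_k$ by Theorem~\ref{thm:col_rec_stocdom} combined with Lemma~\ref{lem:col_rec_reduce}, we get $\|\iP_v\|_\infty \sim \mu_k$. The argmax of $\iP_v$ is uniform in $[k]$ by the coordinate-symmetry of $\Gamma_s \mu_k$ and the uniform tie-breaking via $U^1_v$, and is independent of $\|\iP_v\|_\infty$ because the three ingredients---tie-breaking, the reduction $q_\star$, and the permutation sampling---are driven by three disjoint portions of $U_v$; hence $\iP_v \sim \mu_k$ in $\MM_s(\Lambda^k)$, closing the induction. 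The main technical obstacle I anticipate is the Bayes calculation at the internal step: unlike the illustrative Alice--Bob example where the permutation is chosen independently of the observation, here $\pi_v$ depends on $\iB_{L^v_1}$ through $\iP^\circ_v$, so the averaging identity must be applied conditionally on $\iP^\circ_v$ and one has to verify carefully that the argmax label of Bob's posterior---computed from the permuted $\iB_v$ without direct knowledge of $l_v$---really coincides with Alice's $l_v$, thereby keeping the recursion inside the low-dimensional manifold $\Lambda^k$ on which $\mu_k$ lives.
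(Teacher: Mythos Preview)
Your inductive skeleton matches the paper's exactly, and your base case and the reduction step via $q_0,q_\star$ are correct. The gap is in the internal step: the averaging identity you invoke,
\[
F(\eta;\nu)=\sum_{\pi}\nu(\pi)(\pi\circ F)(\eta),
\]
is derived in the paper under the standing fact $F(\pi^{-1}\circ\eta)=\pi\circ F(\eta)$, which for raw leaf colourings is immediate from colour symmetry. For the arrays $\iB_u$, however, this becomes the non-trivial statement
\[
\iP_u(\pi\circ\iB_u)=\pi^{-1}\circ\iP_u(\iB_u),
\]
which the paper carries as an explicit auxiliary induction hypothesis (their equation~\eqref{eq:perinvar}) and re-proves at every level. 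Your conditional-independence observation---that given $\iP^\circ_v$ the permutation $\pi_v$ is independent of $\sigma_v$---is true and is used, but it does not substitute for equivariance: Bob never sees $\iP^\circ_v$, only $\pi_v\circ\iB_{L^v_1}$, and without \eqref{eq:perinvar} you cannot identify the posterior he computes on the permuted array with any coordinate permutation of $\iP^\circ_v(\iB_{L^v_1})$. Concretely, to show that the $\nu_1(l_v)$-average lands on $\Lambda(\iP^\circ_v)$ one needs that the argmax set of $\iP^\circ_v(\tilde\pi\circ\pi^1_v\circ\iB_{L^v_1})$ has constant size over $\tilde\pi\in\SS_k$ and contains $l_v$ whenever $\tilde\pi$ fixes $l_v$; this is exactly what equivariance delivers, and it is also how the paper handles the dependence of $l_v$ and $p_v$ on $\iB_{L^v_1}$ (since $\|\iP^\circ_v\|_\infty$ is then permutation-invariant).

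The difficulty you flag at the end---that $\pi_v$ depends on $\iB_{L^v_1}$---is real but is the symptom rather than the cause. Once you add $\iP_v(\pi\circ\iB_v)=\pi^{-1}\circ\iP_v(\iB_v)$ to your induction hypothesis (and verify it, which is a short calculation from the inductive equivariance of the children together with the group-conjugation identity $\pi\circ\nu_1(l)=\nu_1(\pi(l))$), the rest of your outline goes through verbatim.
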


\begin{proof}
    For each permutation $\pi\in\SS_k$ and $T$-indexed array $\iB = (\ib_v)_{v\in T}\in ([0,1]\times [k])^T$, let $\pi \circ \ib_v := (p_v,\pi(\eta_v))$ and  $\pi\circ\iB := (\pi\circ\ib_v)_{v\in T}$.
    We induct on the number of levels in tree $T$ to prove the claim of Theorem~\ref{thm:AliceBob} together with the result that 
\begin{equation}
    \iP_\rho(\pi\circ \iB_\rho)
    =\Big(
    P(\sigma_\rho=l \mid
    \pi \circ \iB_\rho
    )
    \Big)_{l=1}^k
    =\pi^{-1} \circ \iP_\rho(\iB_\rho).
\label{eq:perinvar}
\end{equation}
    
For $n=0$, $T=\set{\rho}$ is the singleton tree and $\iP^\circ_\rho= (\Ind{\sigma_\rho = l})_{l=1}^k$, Bob's belief before Alice's action, follows distribution $\mu_0$. 
Given $\ib_{\rho,\rho} = (p_\rho,\eta_{\rho,\rho})$, Bob's posterior estimation of $\sigma_\rho$ satisfies
\[
    P(\sigma_\rho = \tilde{\pi}^{-1}(\eta_{\rho,\rho})
    \mid \ib_{\rho,\rho})
    = \nu_2(p_\rho) (\tilde{\pi})
    ,\quad \forall \tilde{\pi}\in\SS_k.
\]
Therefore, applying \eqref{eq:rec_defnu2}, Bob's belief of $\sigma_\rho$ after Alice's action at $\rho$ becomes
\[
    \iP_\rho = \Big(
        P(\sigma_\rho=l \mid
            \pi_{\rho}(\sigma_\rho) 
            = \eta_{\rho,\rho}
        )
    \Big)_{l=1}^k
    = (1-p_\rho) \iP^\circ_\rho 
    + p_\rho\cdot \Big(\frac{1}{k},\dots,\frac{1}{k}\Big).
\]
Observe that by definition $p_\rho = \tq_0(1,U_\rho) = \tq_0(\|\iP^\circ_\rho\|_\infty,U_\rho)$. Lemma~\ref{lem:col_rec_reduce} and \eqref{eq:tq} then imply that $\iP_\rho$ follows the distribution of $\mu_k$.
It is not hard to check that \eqref{eq:perinvar} also holds.

Suppose we have proved Theorem~\ref{thm:AliceBob} and \eqref{eq:perinvar} for trees no greater than $n-1$ levels, we now proceed to trees of $n$ levels. By the induction hypothesis, for each $u\in L_1$, $\iP_u=\iP_u(\iB_u)$, Bob's belief of $\sigma_u$ after Alice's actions on $T_u$, follows the distribution $\mu_k$. 
Following a similar calculation of \eqref{eq:col_rec_nutonuplus},
we can show that conditioning on $\sigma_\rho=l$ but not $T$ and $\sigma_{T\setminus\set{\rho}}$, $(\iP_u)_{u\in L_1}$ has the same joint distribution as $\pois(d)$ independent samples of $\Pi_{l}\mu_k$. Therefore 
\[
    \iP^\circ_\rho =
    \left(
    \frac{\prod_{u\in L_1}
        (1-\iP_{u}^{(l)})}
    {\sum_{m=1}^k \prod_{u\in L_1}
        (1-\iP_{u}^{(m)})}
    \right)_{l=1}^{k}
    \sim
    \Gamma_s \mu_k.
\]

Now we turn to $\iP_\rho = \iP_\rho(\iB_\rho)$. For each $u\in L_1$, let $\iB_{\rho,u}:=(\ib_{w,\rho})_{w\in T_u}$, $\iB_{\rho,L_1}:=(\ib_{w,\rho})_{w\in T_\rho\setminus\set{\rho}}$ be sub-arrays of $\iB_\rho$.
Using the induction hypothesis on \eqref{eq:perinvar}, for each $\pi\in\SS_k$ we have
\begin{align*}
    \iP^\circ_\rho(\pi\circ \iB_{L_1}) 
    &=
    \left(
    \frac{\prod_{u\in L_1}
    (1-\iP_{u}^{(l)}(\pi\circ \iB_{u}))}
    {\sum_{m=1}^k \prod_{u\in L_1}
    (1-\iP_{u}^{(m)}(\pi\circ \iB_{u}))}
    \right)_{l=1}^{k}
    =
    \left(
    \frac{\prod_{u\in L_1}
    (1-\iP_{u}^{(\pi^{-1}(l))}(\iB_{u}))}
    {\sum_{m=1}^k \prod_{u\in L_1}
    (1-\iP_{u}^{(m)}(\iB_{u}))}
    \right)_{l=1}^{k}
    \\&=
    \pi^{-1}\circ \iP^\circ_\rho(\iB_{L_1}) 
    .
\end{align*}
Hence   set
$ \set{l: 
(\iP^\circ_\rho (
    \tilde{\pi} \circ 
    \pi^1_\rho \circ 
    \iB_{L_1})
)^{(l)}
= \| \iP^\circ _\rho (\iB_{L_1}) \|_\infty
}
$
has the same size for all $\tilde{\pi}\in \SS_k$ and contains $l_\rho$ if $\tilde{\pi} \in \supp \nu_1(l_\rho)$.
Furthermore, by the symmetry of $\sigma_{L_n}$, each element of $\set{\pi \circ \iB_\rho}_{\pi\in\SS_k}$ is equally likely to happen.
Therefore by \eqref{eq:rec_defnu1}, the belief of Bob after the first action of Alice on $T_\rho$ satisfies that
\begin{align*}
    \iP^1_\rho =
    \iP^1_\rho(l_\rho, \pi_\rho^1 \circ \iB_{L_1})
    &:=
    \Big( P(\sigma_\rho=l\mid
        l_\rho,
        \pi_\rho^1 \circ \iB_{L_1}
    )\Big)_{l=1}^k
    \\&=
    \sum_{\tilde{\pi}\in \SS_k}
    \nu_1(l_\rho)(\tilde{\pi})
    \iP^\circ_\rho(
        \tilde{\pi}^{-1}\circ
        \pi^{1}_\rho\circ
        \iB_{L_1})
    =
    \Lambda(\iP_\rho^\circ)
    ,
\end{align*}
where the same randomness $U^1_\rho$ is used in breaking ties of $\Lambda$.
It follows that $\iP^1_\rho\sim (\Lambda\circ \Gamma_s) \mu_k$.

Next we note that for any $\tilde{\pi}\in\SS_k$, $\|\iP^\circ_\rho(\tilde{\pi}\circ \iB_{L_1}) \|_\infty = \|\iP^\circ_\rho(\iB_{L_1}) \|_\infty$.
Therefore $p_\rho$, as a function of $\|\iP^\circ_\rho(\iB_{L_1}) \|_\infty$ and $U^2_\rho$, is invariant under permutations of $\iB_{L_1}$.  Given $\ib_{\rho,\rho} = (p_\rho,\eta_{\rho,\rho})$, Bob's posterior estimation of $l_\rho$ and  $\pi^1_\rho \circ \iB_{L_1}$ satisfies that

\[
    P( l_\rho = \tilde{\pi}^{-1}_2(\eta_{\rho,\rho})
    ,
    \pi^1_\rho \circ \iB_{L_1} 
    = \tilde{\pi}^{-1}_2 \circ \iB_{\rho,L_1}
    \mid
    \iB_\rho)
    = \nu_2(p_\rho)(\tilde{\pi}_2)
    .
\]
Applying \eqref{eq:rec_defnu2}, we have that
\[
    \iP_\rho(\iB_\rho)=
    \sum_{\tilde{\pi}_2 \in \SS_k}
    \nu_2(p_\rho)(\tilde{\pi}_2)
    \iP^1_\rho(
        \tilde{\pi}^{-1}_2(\eta_{\rho,\rho}),
        \iB_{\rho,L_1}
    ) 
    = (1-p_\rho) \iP^1_\rho(
        \eta_{\rho,\rho},
        \iB_{\rho,L_1}
    )
    + p_\rho\cdot \Big(\frac{1}{k},\dots,\frac{1}{k}\Big)
    .
\]
Recall that $p_\rho = \tq_\star(\|\iP^\circ_\rho\|_\infty,U^2_\rho) = \tq_\star(\|\iP^1_\rho\|_\infty,U^2_\rho)$ where $q_\star$ is the function that reduces $(\Lambda\circ \Gamma_s)\mu_k$ to $\mu_k$ and $\tq_\star$ is defined in \eqref{eq:tq}.
Lemma~\ref{lem:col_rec_reduce} then implies that
$\iP_\rho$ follows the distribution of $\mu_k$.

Finally we finish the induction hypothesis of \eqref{eq:perinvar}.
Observe that for $\tilde{\pi}\sim \nu_1(l)$, $\pi\circ \tilde{\pi} \circ \pi^{-1} $ follows the distribution $\nu_1(\pi(l))$.
For each $\pi\in\SS_k$, we have
\begin{align*}
    \iP^1_\rho(\pi(l_\rho), \pi(\pi_\rho^1 \circ \iB_{L_1}))
    &=
    \sum_{\tilde{\pi}\in \SS_k}
    \nu_1(\pi(l_\rho))(\tilde{\pi})
    \iP^\circ_\rho(\tilde{\pi}^{-1}\circ \pi \circ\iB_{L_1})
    \\&=
    \sum_{\tilde{\pi}\in \SS_k}
    \nu_1(l_\rho)(\tilde{\pi})
    \iP^\circ_\rho(
        \pi\circ\tilde{\pi}^{-1}\circ \pi^{-1}
        \circ \pi \circ\iB_{L_1}
    )
    \\&=
    \sum_{\tilde{\pi}\in \SS_k}
    \nu_1(l_\rho)(\tilde{\pi})
    \iP^\circ_\rho(
        \pi\circ\tilde{\pi}^{-1} \circ\iB_{L_1}
    )
    = \pi^{-1} \circ \iP^1(l_\rho,\pi^1_\rho\circ \iB_{L_1})
    .
\end{align*}
It follows that
\[
    \iP_\rho(\pi\circ \iB_\rho)
    = (1-p_\rho) \iP^1_\rho(
        \pi(\eta_{\rho,\rho}),
        \pi\circ \iB_{\rho,L_1}
    )
    + p_\rho\cdot \Big(\frac{1}{k},\dots,\frac{1}{k}\Big).
    = \pi^{-1} \circ \iP_\rho(\iB_\rho).
\]
And that finishes the proof the induction hypothesis.
\end{proof}
Theorem~\ref{thm:col_rec_stocdom} and Theorem~\ref{thm:AliceBob} immediately imply the following result.
\begin{cor} \label{cor:AliceBob}
For any $d,k$ such that Theorem~\ref{thm:col_rec_stocdom} holds,
there exist independent random array $\iU$ and measurable function $\iB_\rho(T,\sigma_{L_n},\iU)$ such that
\[
    \liminf_{n\to\infty} \E \sup_{l\in[k]}
    \left|
        P\left(
            \sigma_{\rho}=l \mid
            \iB_\rho(T_n,\sigma_{L_n},\iU)
        \right)
        - \frac{1}{k}
    \right|>0.
\]
\end{cor}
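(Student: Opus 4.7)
The plan is to deduce the corollary directly from Theorem~\ref{thm:AliceBob} combined with the non-triviality of the fixed point $\mu_k$ furnished by Theorem~\ref{thm:col_rec_stocdom}. First I would take $\iU$ and $\iB_\rho(T,\sigma_{L_n},\iU)$ to be exactly the independent random array and the measurable function constructed by Alice's recursive procedure preceding Theorem~\ref{thm:AliceBob}; then for every $n\ge 1$, Theorem~\ref{thm:AliceBob} applies and says that Bob's posterior vector $\iP_\rho(\iB_\rho) = (P(\sigma_\rho=l\mid \iB_\rho))_{l=1}^k$ has distribution exactly $\mu_k\in\mathcal{M}_s(\Lambda^k)$, independently of $n$.

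Next I would evaluate the left-hand side of the corollary. Because every element of $\Lambda^k$ has one coordinate equal to $\|x\|_\infty \ge 1/k$ and $k-1$ coordinates equal to $(1-\|x\|_\infty)/(k-1) \le 1/k$, the deterministic identity $\sup_{l\in[k]}|x^{(l)} - 1/k| = \|x\|_\infty - 1/k$ holds for all $x\in\Lambda^k$. Applying this to $\iP_\rho$ and using the $1$-to-$1$ identification of $\mu_k$ with a measure on $[1/k,1]$, I obtain
\[
    \E \sup_{l\in[k]}\Big|P(\sigma_\rho=l\mid \iB_\rho) - \frac{1}{k}\Big|
    = \E\|\iP_\rho\|_\infty - \frac{1}{k}
    = \E_{Y\sim\mu_k} Y - \frac{1}{k},
\]
which is constant in $n$; consequently the $\liminf$ in the corollary equals this single number.

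It remains to check that $\mu_k\ne\delta_{1/k}$, so that the constant is strictly positive. The explicit construction of $\mu_k$ in Section~\ref{sec:col_stoc} places its support bounded away from $1/k$ (indeed the statement $(\Lambda\circ\Gamma_s)\mu_k\succ_{c/\log k}\mu_k$ is only meaningful when $\mu_k$ charges $(1/k,1]$), and hence $\E_{Y\sim\mu_k}Y>1/k$. I do not expect any genuine obstacle in this corollary itself: all the real work lies in producing the nontrivial fixed point $\mu_k$ (Theorem~\ref{thm:col_rec_stocdom}) and in verifying that Alice's recursive manipulation exactly preserves this distribution at every level (Theorem~\ref{thm:AliceBob}); what remains here is a short reading of definitions.
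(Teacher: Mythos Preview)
Your argument is correct and matches the paper's approach; indeed the paper gives no proof beyond the one-line remark that Theorem~\ref{thm:col_rec_stocdom} and Theorem~\ref{thm:AliceBob} ``immediately imply'' the corollary, and you have filled in exactly the intended details. One small caution: the parenthetical claim that $(\Lambda\circ\Gamma_s)\mu_k\succ_{c/\log k}\mu_k$ is ``only meaningful'' when $\mu_k$ charges $(1/k,1]$ is not quite right, since by the definition of $\succ_\epsilon$ the trivial measure $\delta_{1/k}$ does satisfy $\delta_{1/k}\succ_\epsilon\delta_{1/k}$ vacuously; your primary justification via the explicit construction of $\mu_k$ in Section~\ref{sec:col_stoc} is the correct one and suffices.
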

\subsection{Regular trees}
The result of Theorem~\ref{thm:AliceBob} and Corollary~\ref{cor:AliceBob} can be modified to regular trees 
by, roughly speaking, truncating $T\sim\mathcal{T}_{d}$ into a smaller tree:
Let $\tpois(d',d)$ be the truncated Poisson distribution defined as the distribution of $D'\cdot \Ind{D'\le d}$ where $D'\sim \pois(d')$ and let $\mathcal{T}_{\tpois(d'd)}$ be the Galton-Watson tree of offspring distribution $\tpois(d',d)$. There exists a natural coupling between $T_1\sim\mathcal{T}_{\tpois(d'd)}$, $T_2\sim \mathcal{T}_{\pois(d')}$ and $T \sim \mathcal{T}_d$ such that $T_1$ is a subtree of $T_2$ and $T$ with probability $1$.

Recall that $\MM(\Delta^k)$-operator $\Gamma$ defined in \eqref{eq:col_rec_treerec} depends implicitly on the offspring distribution $\xi$. We differentiate the two operators under $\xi =\mathcal{T}_{\pois(d')}$ and $\xi =\mathcal{T}_{\tpois(d',d)}$ as $\Gamma^{p}$ and  $\Gamma^{t}$ respectively. 
Fix $\beta^*\in(\beta^{0},1)$. For any $d,k$ satisfying \eqref{eq:col_main_recregime}, let $d':=\lfloor d-(\beta^*-\beta^{0})k\rfloor$. For $k\ge k_0(\beta^\star,c)$, 
\begin{equation}\label{eq:dTVpt}
d_{\mathrm{TV}}(\Lambda\circ\Gamma^{p}_s\mu_{k},\Lambda\circ\Gamma^{t}_s\mu_{k})
\le P(\pois(d')>d)<c(k\log k)^{-1}.
\end{equation}
Therefore if $(d',k)$ further satisfies Theorem~\ref{thm:col_rec_stocdom}, then $(\Lambda\circ\Gamma^{t})_s\mu_{k}$ stochastically dominates $\mu_{k}$. Thus we can find function $q_t$ that reduces $(\Lambda\circ\Gamma^{t})_s\mu_{k}$ to $\mu_{k}$ and define $\tq_t$ similarly.

Let $T\sim \mathcal{T}_d$ be the $n$-level $d$-ary tree and $\iD:=(D_v)_{v\in T}$ be a $T$-indexed array of independent $\textup{tPois}(d',d)$ random variables. 
We now describe the necessary modification such that $\itA_v$, $\itB_v$, $\itP^\circ_v$, $\itP_v$ can be constructed in a similar fashion as $\iA_v, \iB_v$, $\iP^\circ_v$, $\iP_v$. The construction remains the same for each $v\in L_n$. For each $v\notin L_n$, we proceed with the following changes:
\begin{enumerate}
    \item In step 2(a), instead of considering all $u\in L^v_1$, Alice now only uses the first $D_v$ vertices and discards the rest. Namely, letting $u_1,\dots,u_d$ be the $d$ offspring of $v$, she calculates
\[
    \itP^\circ_{v}
    :=
    \Bigg(
    \frac{\prod_{i=1}^{D_v}
        (1-\itP_{u_i}^{(l)})}
    {\sum_{m=1}^k \prod_{i=1}^{D_v}
        (1-\itP_{u_i}^{(m)})}
    \Bigg)_{l=1}^k
    ,
\]
and sets $\itb_{w,v}=(\star,\star)$ for each $w\in T_{u_i}, i>D_v$. She then continues to set $\tilde{\ia}_v$ and the rest of $\itB_{v}$ using $\itP^\circ_v$ and $U_v$.
\item In step 2(b), instead of setting $p_v = \tq_\star(\|\iP^\circ_{v}\|_\infty, U^2_v)$, Alice sets $p_v = \tq_t(\|\itP^\circ_{v}\|_\infty, U^2_v)$.
\end{enumerate}
In short, Bob now has to reconstruct $\sigma_\rho$ based only on the information $\itB_\rho$ of a truncated tree of $T$ sampled from $\mathcal{T}_{\tpois(d',d)}$, as the information on the rest of the vertices are erased and set to $(\star,\star)$.

\begin{cor}
    \label{cor:rec_regulartree}Fix $\beta^\star\in(\beta^0,1)$. For any $d$, $k$ such that $(d':=\lfloor d-(\beta^*-\beta^{0})k\rfloor,k)$ satisfies Theorem~\ref{thm:col_rec_stocdom} and \eqref{eq:dTVpt},
    there exist independent random arrays $\iU,\iD$ and measurable function $\itB_\rho(\sigma_{L_n},\iU,\iD)$ such that
\[
    \liminf_{n\to\infty} \E \sup_{l\in[k]}
    \left|
        P\left(
            \sigma_{\rho}=l \mid
            \itB_\rho(\sigma_{L_n},\iU,\iD)
        \right)
        - \frac{1}{k}
    \right|>0.
\]
\end{cor}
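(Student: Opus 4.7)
The plan is to repeat the induction of Theorem~\ref{thm:AliceBob} with the truncated construction described immediately before the corollary, and to verify that Theorem~\ref{thm:col_rec_stocdom} still provides a useful stochastic dominance after truncation. First I would observe that since $D_v\sim\tpois(d',d)$ is independent of everything else and is almost surely bounded by $d$, the first $D_v$ offspring of $v$ inside the $d$-ary tree $T$ give rise to a copy of the Galton--Watson tree $\mathcal{T}_{\tpois(d',d)}$ that is simultaneously embedded in $T$. Consequently, from Alice's perspective the sub-array $\itB_\rho$ is a deterministic function of $(T,\sigma_{L_n},\iU,\iD)$, but everything outside the truncated tree is discarded. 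Hence Bob's inference is precisely the inference he would make on the truncated Galton--Watson tree $\mathcal{T}_{\tpois(d',d)}$.

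Next I would replay the induction of Theorem~\ref{thm:AliceBob}, substituting $\Gamma^t$ for $\Gamma^p$ wherever the recursion appears: for each $v\notin L_n$, the belief before Alice's action becomes
\[
    \itP^\circ_v \sim \Gamma^t_s\mu_k,
\]
because conditional on $\sigma_v=l$ the number of used children is $\tpois(d',d)$ and each used child contributes an independent sample from $\Pi_l\mu_k$ by the induction hypothesis. The permutation-covariance identity \eqref{eq:perinvar} is purely algebraic and is unaffected by truncation. To close the loop, I need $(\Lambda\circ\Gamma^t_s)\mu_k\succ\mu_k$ so that Alice can apply $\tq_t$ at step 2(b) of the modified construction. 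This is exactly where \eqref{eq:dTVpt} enters: Theorem~\ref{thm:col_rec_stocdom} gives $(\Lambda\circ\Gamma^p_s)\mu_k\succ_{c/\log k}\mu_k$, and the total-variation bound
\[
    d_{\mathrm{TV}}\bigl((\Lambda\circ\Gamma^p_s)\mu_k,\,(\Lambda\circ\Gamma^t_s)\mu_k\bigr)
    \le P(\pois(d')>d) < c/(k\log k)
\]
combined with the second part of Proposition~\ref{prop:rec_stocfact} upgrades the dominance to $(\Lambda\circ\Gamma^t_s)\mu_k\succ\mu_k$ (the $c/\log k$ cushion of the $\pois(d')$ recursion absorbs the $o(1/\log k)$ perturbation from truncation, since $d'$ satisfies the hypothesis of Theorem~\ref{thm:col_rec_stocdom}). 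This validates the existence of $q_t$ and $\tq_t$.

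With $q_t$ in hand, the induction hypothesis that $\itP_\rho \sim \mu_k$ propagates level by level by the same argument as in Theorem~\ref{thm:AliceBob}: step (2a) produces $\itP^\circ_v\sim\Gamma^t_s\mu_k$, the $\nu_1(l_v)$ action realises the operator $\Lambda$, and the $\nu_2(p_v)$ action together with Lemma~\ref{lem:col_rec_reduce} and the definition of $\tq_t$ pulls $(\Lambda\circ\Gamma^t_s)\mu_k$ back to $\mu_k$. Once this induction is established, the conclusion follows exactly as in Corollary~\ref{cor:AliceBob}: since $\mu_k$ is non-trivial on $[\tfrac{1}{k},1]$ and remains fixed under the manipulated recursion, the expected $\ell_\infty$-distance of $\iP_\rho$ from the uniform distribution is bounded away from $0$ uniformly in $n$. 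The main obstacle, and the only genuinely new ingredient beyond Theorem~\ref{thm:AliceBob}, is the quantitative comparison \eqref{eq:dTVpt}: one must ensure $d$ is close enough to $d'$ that the truncation error is strictly smaller than the stochastic-dominance margin $c/\log k$, which is precisely the role of the linear shift $d'=\lfloor d-(\beta^*-\beta^0)k\rfloor$ and of taking $k\ge k_0(\beta^*,c)$ large.
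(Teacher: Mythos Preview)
Your proposal is correct and follows essentially the same approach as the paper's proof, which simply states that ``by an essentially parallel argument of Theorem~\ref{thm:AliceBob}'' one has $\itP^\circ_v\sim\Gamma^t_s\mu_k$ and hence $\itP_v\sim\mu_k$. You have merely spelled out in more detail the ingredients the paper leaves implicit (in particular, the use of \eqref{eq:dTVpt} and Proposition~\ref{prop:rec_stocfact}(2) to justify the existence of $q_t$ is already recorded in the text preceding the corollary rather than in its proof).
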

\begin{proof}
By an essentially parallel argument of Theorem~\ref{thm:AliceBob},  we can inductively show that $\itP^\circ_v$, as a function of $(T,\sigma_{T_v\cap L_n},\iU_v,\iD_v)$, follows the distribution of $\Gamma_s^t \mu_k$ and hence $\itP_v \sim \mu_k$ for each $v\in T$.
Corollary~\ref{cor:rec_regulartree} then follows immediately.
\end{proof}

\begin{proof}[Proof of Theorem~\ref{thm:col_main_rec}]
    Let $\beta^0,c$ be the constant in Theorem~\ref{thm:col_rec_stocdom} and $\beta^*$ be selected in Corollary~\ref{cor:rec_regulartree}.  For any $k\ge k_0$ and  $d,k$ satisfying \eqref{eq:col_main_recregime},  they also satisfy the conditions of Theorem~\ref{thm:col_rec_stocdom} and Corollary~\ref{cor:AliceBob}.
Therefore if the $k$-colouring model on $T\sim\mathcal{T}_{\pois(d)}$ is not reconstructible for some $d,k$ in the same region,
then  we must have
    \[
        \limsup_{n\to\infty} \E_{T\sim\mathcal{T}_{\pois(d)}}
        [\Var(\sigma_\rho\mid 
            \iB_\rho(T_n,\sigma_{L_n},\iD)
        )]
        \le
        \limsup_{n\to\infty} \E_{T\sim\mathcal{T}_{\pois(d)}}
        [\Var(\sigma_\rho\mid T_n, \sigma_{L_n})]
        = 0
        ,
    \]
where the first step follows from the fact that $\iB_\rho=\iB_\rho(T_n, \sigma_{L_n},\iU)$ is independent of $\sigma_{\rho}$ given $\sigma_{L_n}$.
But that conflicts with the result of Corollary~\ref{cor:AliceBob}. The same confliction exists with $T\sim \mathcal{T}_d$, $\itB_\rho= \itB_\rho(\sigma_{L_n},\iU,\iD)$ and Corollary~\ref{cor:rec_regulartree}. Therefore both models are reconstructible.
\end{proof}

\section{Proof of Theorem~\ref{thm:col_rec_stocdom} \label{sec:col_stoc}}

In this section we prove the stochastic dominance result of Theorem~\ref{thm:col_rec_stocdom}. In Section \ref{sec:rec23}, we first analyse the transformation $\Gamma$ induced on $\MM(\Lambda^k)$ by (\ref{eq:col_rec_treerec}) and give a parameterized candidate of $\mu_k$. In the remaining sections, we verify that the candidate does indeed satisfy Theorem~\ref{thm:col_rec_stocdom}.

\subsection{Reformulating the recursion}\label{sec:rec23}
Recall the notations in the definition of $\Gamma\mu$ in (\ref{eq:col_rec_treerec}), where $\mu = \Pi_1 \mu_s$ for some  $\mu_s\in\MM_s(\Lambda^k)$.
For each $l\in[k], 1\le i\le B_l$, let $m_{i,l}:= m(\vX_{i,l},l) :=\arg\max_{m\in[k]} \vX^{(m-l+1)}_{i,l}$ be the coordinate of $\vX_{n+1}$ that contains the largest entry of $\vX_{i,l}$ and draw $m_{i,l}$ from $[k]$ uniformly at random if $\vX_{i,l} = (\frac{1}{k},\dots,\frac{1}{k})$.
Since $\mu$ is tilted from some symmetric measure $\mu_s$, similar to \eqref{eq:col_rec_nutonuplus},
\[
    P(m_{i,l}=m
    \mid \|\vX_{i,l}\|_{\infty}=x)=
    \Bigg\{
    \begin{array}{ll}
        x & m=l\\
        \frac{1-x}{k-1} & m\neq l
    \end{array}.
\]
Let $\mu^{=}(dx):=x\mu(dx)$ and $\mu^{\neq}(dx):=(1-x)\mu(dx)$.  The joint distribution of $(\|\vX_{i,l}\|_{\infty}, m_{i,l})$ satisfies
\[
P(\|\vX_{i,l}\|_{\infty}\in dx,m_{i,l}=m)
=
\Bigg\{
\begin{array}{ll}
    \mu^{=}(dx) & l=m\\
    \frac{1}{k-1}\mu^{\ne}(dx) & l\neq m
\end{array}
,\quad \forall x\in[0,1],\ m\in[k]
.
\]
For each $m\in[k]$, define
\[
    C_m^=:=\set{(i,m): m_{i,m} = m},\quad
    C_m^{\neq}:=\set{(i,l): l\neq m, m_{i,l} = m}
    \quad\textup{and}\quad
    C_m := C_m^= \cup C_m^{\ne}.
\]
Let $c_{m}^{=}$, $c_{m}^{\neq}$ be the cardinality of $C_{m}^{=}$ and $C_{m}^{\neq}$ respectively and set $p_{\neq}:=\mu^{\neq}([\frac{1}{k},1])=1-\mu^{=}([\frac{1}{k},1])$ to be the probability of $\{(i,l)\notin C_{l}^{=}\}$.
Note that no offspring of the root has colour $1$. Given $d_\rho=\sum_{l=1}^k B_l$, $(c_{1}^{=},c_{2}^{=},\dots,c_{k}^{=},c_{1}^{\ne},c_{2}^{\neq},\dots,c_{k}^{\neq})$
follows multinomial distribution of sum $d_{\rho}$ and probability
\begin{equation}
\frac{1}{k-1}\left(0,1-p_{\ne},\dots,1-p_{\ne},p_{\neq},\frac{k-2}{k-1}p_{\neq},\dots,\frac{k-2}{k-1}p_{\neq}\right).\label{eq:col_rec_cmrate}
\end{equation}

We now use the new notations to rewrite \eqref{eq:col_rec_treerec}. For each $\vX_{i,l}\neq(\frac{1}{k},\dots,\frac{1}{k})$, the entries of $\vX_{i,l}$ take only two values: $\|\vX_{i,l}\|_{\infty}$ and $(1-\|\vX_{i,l}\|_{\infty})/(k-1)$. And $\vX_{i,l}^{(m-l+1)}=\|\vX_{i,l}\|_{\infty}$ if and only if $m=m_{i,l}$.  Let
$
    \phi(x):=\log\big[(1-\frac{1-x}{k-1})/(1-x)\big],
$
which is an increasing function mapping $[0,1]$ to $[-\infty,\infty]$. By taking out the common factor of $\prod_{l,i}(1-\frac{1-\|\vX_{i,l}\|_{\infty}}{k-1})$, we rewrite (\ref{eq:col_rec_treerec}) as
\begin{align}
\vX_{n+1}^{(m)}
&\deq
\frac{\prod_{(i,l)\in C_{m}}(1-\|\vX_{i,l}\|_{\infty})/(1-\frac{1-\|\vX_{i,l}\|_{\infty}}{k-1})}
{\sum_{m'=1}^{k}\prod_{(i,l)\in C_{m'}}(1-\|\vX_{i,l}\|_{\infty})/(1-\frac{1-\|\vX_{i,l}\|_{\infty}}{k-1})}
=\frac{\prod_{(i,l)\in C_{m}}e^{-\phi(\|\vX_{i,l}\|_{\infty})}}
{\sum_{m'=1}^{k}\prod_{(i,l)\in C_{m'}}e^{-\phi(\|\vX_{i,l}\|_{\infty})}}
.
\label{eq:col_rec_treerec2}
\end{align}
Note that the exact value of $m_{i,l}$ when $\vX_{i,l} = (\frac{1}{k},\dots,\frac{1}{k})$ does not matter since $\phi(\frac{1}{k}) = 0$.
We further rewrite (\ref{eq:col_rec_treerec2}) as
\begin{equation}
\vX_{n+1}^{(m)} \deq
\frac{\left(
    \prod_{i=1}^{c_{m}^{=}}\exp(-\phi(Y_{i,m}^{=}))
    \prod_{i=1}^{c_{m}^{\neq}}\exp(-\phi(Y_{i,m}^{\neq}))
\right)} {
    \sum_{l=1}^{k} \left(
        \prod_{i=1}^{c_{l}^{=}}\exp(-\phi(Y_{i,l}^{=}))
        \prod_{i=1}^{c_{l}^{\neq}}\exp(-\phi(Y_{i,l}^{\neq}))
    \right)
}
=:\frac{\exp(-Z_m)}{\sum_{m=1}^k \exp(-Z_m)}
.
\label{eq:col_rec_treerec3}
\end{equation}
where $Y_{i,l}^{=}$ and $Y_{i,l}^{\neq}$ are i.i.d.~samples of $\frac{1}{1-p_{\neq}}\mu^{=}$ and $\frac{1}{p_{\neq}}\mu^{\neq}$ respectively and
\[
Z_{m}:=\sum_{i=1}^{c_{m}^{=}}\phi(Y_{i,m}^{=})+\sum_{i=1}^{c_{m}^{\neq}}\phi(Y_{i,m}^{\neq}).
\]
We conclude our calculation so far in the following claim.
\begin{prop}\label{prop:col_rec_rewrite}
    For any $d,k$, if there exists $\nu_k\in\MM([\frac{1}{k}, 1])$ (with its unique correspondence in $\MM(\Lambda^k)$) and $c>0$,  such that $\mu_s = \Pi_1^{-1}(\phi^{-1}\circ\nu_k)\in\MM_s(\Lambda^k)$ and for the $(Z_m)_{m=1}^k$ defined as above using $\mu_s$,
\begin{equation}
    W:=
    \log\bigg[
        \frac{k-2}{k-1}+
        \frac{1}{\sum_{m=2}^{k}\exp(Z_{1}-Z_{m})}
    \bigg] \vee 0
    \succ_{c/logk} \nu_k
    ,
\label{eq:col_rec_W}
\end{equation}
then $\mu_s$ satisfies the requirement of Theorem~\ref{thm:col_rec_stocdom}.
\end{prop}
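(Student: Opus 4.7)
The proposition is essentially a translation statement: it rewrites the required stochastic dominance $(\Lambda\circ\Gamma_s)\mu_s \succ_{c/\log k} \mu_s$ on symmetric measures in $\MM_s(\Lambda^k)$ into a one-dimensional statement about the scalar random variable $W$. My plan is to unravel the identifications and check an almost-sure pointwise inequality between $W$ and $\phi(\|\vX_{n+1}\|_\infty)$, and then transfer stochastic dominance through two monotone transformations.

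First, under the bijection between $\MM_s(\Lambda^k)$ and $\MM([\tfrac{1}{k},1])$ the operator $\Lambda\circ\Gamma_s$ acts on $\mathcal{M}([\tfrac{1}{k},1])$ as $\|\Gamma_s(\cdot)\|_\infty$, so it suffices to prove $\|\Gamma_s\mu_s\|_\infty \succ_{c/\log k} \mu_k$, where $\mu_k := \phi^{-1}\circ\nu_k$ is the target distribution of $\|\vX\|_\infty$. Since $\mu_s \in \MM_s(\Lambda^k)$ is permutation-symmetric, $\|\Gamma_s\mu_s\|_\infty$ and $\|\Gamma\Pi_1\mu_s\|_\infty$ have the same distribution, so I can work with $\vX_{n+1}\sim\Gamma\Pi_1\mu_s$ as analyzed in \S\ref{sec:rec23}. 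Because $\phi$ is strictly increasing on $[\tfrac{1}{k},1]$, the statement $\|\vX_{n+1}\|_\infty \succ_{c/\log k} \mu_k$ is equivalent to $\phi(\|\vX_{n+1}\|_\infty) \succ_{c/\log k} \nu_k$.

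Next I verify the key algebraic identity: using the representation $\vX_{n+1}^{(m)} = e^{-Z_m}/\sum_{m'} e^{-Z_{m'}}$ from (\ref{eq:col_rec_treerec3}) and expanding $\phi(x) = \log\bigl[(k-2+x)/((k-1)(1-x))\bigr]$, a direct calculation gives
\[
\phi\bigl(\vX_{n+1}^{(m)}\bigr)
= \log\!\left[\frac{k-2}{k-1} + \frac{1}{\sum_{m'\ne m}\exp(Z_m - Z_{m'})}\right],
\]
so that $W = \phi(\vX_{n+1}^{(1)})\vee 0$. Since $\phi$ is monotone, $\phi(\|\vX_{n+1}\|_\infty) = \max_{m}\phi(\vX_{n+1}^{(m)})$, and since $\|\vX_{n+1}\|_\infty \ge 1/k$ implies $\phi(\|\vX_{n+1}\|_\infty)\ge 0$, we obtain the pointwise inequality $\phi(\|\vX_{n+1}\|_\infty) \ge W$ almost surely.

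Finally I transfer stochastic dominance: any pointwise inequality $A\ge B$ preserves the $\prec_\epsilon$ relation in the sense that $B \succ_\epsilon \nu$ implies $A \succ_\epsilon \nu$. This is a quick case check of the three clauses in the definition of $\prec_\epsilon$: the CDF of $A$ is pointwise below that of $B$, and if the CDF of $B$ at some $x$ is $0$, then so is that of $A$. Combining this with the hypothesis $W\succ_{c/\log k}\nu_k$ yields $\phi(\|\vX_{n+1}\|_\infty)\succ_{c/\log k}\nu_k$, hence $\|\vX_{n+1}\|_\infty\succ_{c/\log k}\mu_k$ by applying $\phi^{-1}$, which is exactly the assertion of Theorem~\ref{thm:col_rec_stocdom}. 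The proposition is essentially bookkeeping — the only non-trivial obstacle is the identity $\phi(\vX_{n+1}^{(m)}) = \log\bigl[\tfrac{k-2}{k-1} + \tfrac{1}{\sum_{m'\ne m}e^{Z_m - Z_{m'}}}\bigr]$, which isolates the coordinate $m=1$ precisely so that the subsequent stochastic-dominance analysis in the following subsections can be carried out on a scalar rather than a $k$-dimensional object.
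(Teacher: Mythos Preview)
Your argument is correct and is essentially the same as the paper's proof. The paper lower-bounds $\|\vX_{n+1}\|_\infty \ge \vX_{n+1}^{(1)}\vee \tfrac{1}{k} = \bigl(1+\sum_{m\ge 2}e^{Z_1-Z_m}\bigr)^{-1}\vee\tfrac{1}{k}$ and then applies $\phi$, whereas you first apply $\phi$ and derive the exact identity $\phi(\vX_{n+1}^{(1)}) = \log\bigl[\tfrac{k-2}{k-1}+1/\sum_{m\ne 1}e^{Z_1-Z_m}\bigr]$; these are the same step in different order, and your explicit verification that $\succ_{\epsilon}$ is preserved under a pointwise almost-sure inequality fills in a detail the paper leaves implicit.
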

\begin{proof}
    Maximizing \eqref{eq:col_rec_treerec3} over $m\in[k]$, we have that
\begin{align*}
\|\vX_{n+1}\|_{\infty}
=\frac{\max\{1,\exp(Z_{1}-Z_{m}),m=2,\dots,k\}}
    {1+\sum_{m=2}^{k}\exp(Z_{1}-Z_{m})}
\ge
\frac{1}{1+\sum_{m=2}^k \exp(Z_1 - Z_m)}
    \vee\frac{1}{k}.
\end{align*}
Composing $\phi$ to both side yields that $\phi(\|\vX_{n+1}\|_\infty)\succ W$.
Theorem~\ref{thm:col_rec_stocdom} then follows from the fact that $\|\Lambda(\vX_{n+1})\|_{\infty}=\|\vX_{n+1}\|_{\infty}$.
\end{proof}

We now propose a parameterized candidate of $\nu_k$:
Let $\delta,\kappa\in(0,1),M\gg 0,0<\gamma,\alpha_{0},\sigma,\epsilon\ll 1$
be parameters to be determined in the order of $(\delta,\kappa,\alpha_{0},M,\sigma,\gamma,\epsilon)$
and write $\alpha=\phi\big(\frac{1}{2}-\alpha_{0}\big)=\log2-O(\alpha_{0})+o_{k}(1)$.
Let $\nu_\star$ be an infinite-volume measure defined as (recalling that $\phi(\frac{1}{k})=0$)
\begin{equation}\label{eq:nu_star}
\nu_\star(dy):=\kappa\delta_{0}(dy)+(1-\kappa)\delta_{\alpha}(dy)+\frac{\gamma}{y^{2}}e^{\delta y}\Ind{y>M}dy,
\end{equation}
where $\delta_{x}$ is the Dirac measure at $x$, and write $\nu_{r}(dy):=\frac{\gamma}{y^{2}}e^{\delta y}\Ind{y>M}dy$
for the right tail of $\nu_\star$. We will use $\nu_\star$ as a ``scaling limit'' of $\nu_{k}$ and show that the assumption of Prop.~\ref{prop:col_rec_rewrite} is satisfied with
\[
\nu_{k}(dy):=\frac{1}{\log k}\nu_\star(dy)\Ind{0\le y\le a_{k}},
\]
for some choice of $(\delta,\kappa,\alpha_{0},M,\sigma,\gamma,\epsilon)$ and $k\ge k_0 = k_0(\delta,\kappa,\alpha_{0},M,\sigma,\gamma,\epsilon)$, where $a_{k}$ is the constant such that $\nu_{k}$ is a probability measure.

For convenience of notation, we will write $k\ge k_0$ where $k_0$ depends on all six parameters. We will use $1_{\le a_{k}}$ or $1_{\ge c_{k}}$ to cut (part of) a measure above or below such that the total mass is 1. The exact value of $a_{k}$ and $c_{k}$ can be derived implicitly and may vary from line to line.
 Let $\nu_\star^{=}(dy):=\phi\circ\mu^{=}(dx)=\phi\circ x\mu(dx)=\phi^{-1}(y)\nu_\star(dy)$, where $\phi^{-1}(y)=1-\left(e^{y}+(k-1)^{-1}\right)^{-1}$ and define
$\nu_\star^{\neq},\nu_{r}^{=},\nu_{r}^{\neq},\nu_{k}^{=},\nu_{k}^{\neq}$
similarly. We define the tail weights
\begin{align*}
p_{r}^{\neq} & :=\frac{1}{\gamma}\nu_{r}^{\neq}([M,\infty))=\int_{M}^{\infty}\frac{e^{\delta y}}{y^{2}(e^{y}+\frac{1}{k-1})}dy<\infty\\
p_{k}^{\neq} & :=
\mu_{k}^{\neq}([1/k,1))=
\nu_{k}^{\neq}([0,\infty))
\\
& \le\frac{1}{\log k}\left[\frac{1}{k}(1-\kappa)+\Big(\frac{1}{2}-\alpha_{0}\Big)\kappa+\gamma p_{r}^{\neq}\right]
= (1-o(1))\frac{\gamma p^{\neq}_r}{\log k}.
\end{align*}

\subsection{Distribution of \texorpdfstring{$Z_m$}{Zm}}
In this section we bound the distribution of $Z_m$ in terms of $\nu_\star$.
Let $D:=d/(k-1)=\log k+\log\log k+\beta$. For $T\sim\mathcal{T}_{\pois(d)}$,
(\ref{eq:col_rec_cmrate}) implies that $(c_{m}^{=},c_{m}^{\neq})$'s are independent Poisson random variables with rate $(0,p_{k}^{\neq}D)$ for $m=1$ and $((1-p_{k}^{\neq})D,\frac{k-2}{k-1}p_{k}^{\neq}D)$ for $m\ge2$.
Hence, for $m\ge2$,
\begin{align*}
Z_{m} &\deq
\bigg(
    \pois((1-p_{k}^{\neq})D)
    \otimes\frac{1}{1-p_{k}^{\neq}}\nu_{k}^{=}
\bigg)
\oplus \bigg(
    \pois\Big(\frac{k-2}{k-1}p_{k}^{\neq}D\Big)
    \otimes\frac{1}{p_{k}^{\neq}}\nu_{k}^{\neq}
\bigg)
\\ & =\pois\bigg(\Big(1-\frac{p_{k}^{\neq}}{k-1}\Big)D\bigg)\otimes\frac{\nu_{k}^{=}+\frac{k-2}{k-1}\nu_{k}^{\neq}}{(1-\frac{1}{k-1}p_{k}^{\neq})}
\succ
\pois\bigg(\Big(1-\frac{p_{k}^{\neq}}{k-1}\Big)D\bigg)\otimes\frac{\nu_{k}}{(1-\frac{1}{k-1}p_{k}^{\neq})}1_{\le a_{k}},
\end{align*}
where the last line follows from that $(\nu_{k}^{=}+\frac{k-2}{k-1}\nu_{k}^{\neq})(dy)\le \nu_{k}(dy)$.
Namely, $Z_m$ stochastically dominates the sum of points in a Poisson point process with intensity $D\nu_k1_{\le a^0_k}$, where $a_k^0$ satisfies $\nu_k([0,a_k^0])=1-\frac{1}{k-1}p_{k}^{\neq}$. We expand the summation according to the three parts of $\nu_k$ as in \eqref{eq:nu_star}. Firstly, $\delta_0$ does not contribute to the summation. For the second term, we define
$ S_1:=\pois(\kappa)\otimes \delta_\alpha $ and note that $\kappa\le\frac{\kappa D}{\log k}$. Finally for $k\ge k_0$, the total intensity coming from the right tail of $\nu_k$ satisfies
\[
    D\nu_k([M,a^0_k])
    = D(\nu_k([0,a^0_k])-\log^{-1}k)
    = D - 1 - O(k^{-1}\log k) \ge D-1-\gamma
    .
\]
and $(1-\frac{1}{k-1}p_{k}^{\neq})^{-1}\nu_k(dy)\le \frac{1+\gamma}{D-1-\gamma} \nu_r(dy)$. Therefore defining
\[
    S_0 := \pois\left(D-1-\gamma\right)\otimes\frac{1+\gamma}{D-1-\gamma}\nu_{r}1_{\le a_{k}},
\]
it follows that $Z_m\succ S_0+ S_1$.  We first show the following bound for $S_{0}$.
\begin{lem}
\label{lem:col_zm_S0bound}For any $M>M(\alpha_{0})\vee\frac{2}{\delta}$,
there exists constant $C_{M}>0$ such that
\begin{align}
S_{0} & \succ\frac{e^{\gamma+1-\beta}}{k\log k}(\delta_{0}+(1+C_{M}\gamma)\nu_{r}1_{\le a^1_{k}}),\label{eq:col_zm_S0bound}
\end{align}
where $a^1_k$ satisfies $1+(1+C_M\gamma)\nu_r([M,a^1_k])= k\log k e^{-(\gamma+1-\beta)}$.
\end{lem}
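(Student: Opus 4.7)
The plan is to verify \eqref{eq:col_zm_S0bound} by matching the atom at $0$ exactly and then dominating the continuous density of $S_0$ pointwise by that of the target, invoking Proposition~\ref{prop:rec_stocfact}(1). Since $S_0$ is the total mass of a Poisson point process with intensity $(1+\gamma)\nu_r 1_{\le a_k}$, of total rate $\lambda_T:=(1+\gamma)\nu_r([M,a_k])=D-1-\gamma$, we immediately have $P(S_0=0)=e^{-\lambda_T}=e^{\gamma+1-\beta}/(k\log k)$, matching the coefficient of $\delta_0$ on the right of \eqref{eq:col_zm_S0bound}. Both measures vanish on $(0,M)$ (jumps of $S_0$ are $\ge M$), so the task reduces to bounding the density on $[M,a_k^1]$. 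Writing $\tilde{\rho}(z):=\gamma z^{-2}e^{\delta z}$ for the density of $\nu_r$, the compound-Poisson expansion gives
\[
f_{S_0}(z) = e^{-\lambda_T}\sum_{n\ge 1}\frac{(1+\gamma)^n}{n!}\bigl(\nu_r 1_{\le a_k}\bigr)^{*n}(z) \le e^{-\lambda_T}\sum_{n\ge 1}\frac{(1+\gamma)^n}{n!}\nu_r^{*n}(z),
\]
so it suffices to show $\sum_{n\ge 1}(1+\gamma)^n\nu_r^{*n}(z)/n!\le(1+C_M\gamma)\tilde{\rho}(z)$ for all $z\in[M,a_k^1]$.

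The key technical step is a uniform convolution bound $\nu_r^{*n}(z)\le A_2^{n-1}\tilde{\rho}(z)$ with $A_2=O(\gamma/M)$. For $n=2$ and $z\ge 2M$, substitute $w=uz$ in
\[
\nu_r^{*2}(z)=\gamma^2 e^{\delta z}\int_M^{z-M}\frac{dw}{w^2(z-w)^2}=\frac{\gamma^2 e^{\delta z}}{z^3}\int_{M/z}^{1-M/z}\frac{du}{u^2(1-u)^2};
\]
expanding via partial fractions $1/(u^2(1-u)^2)=1/u^2+2/(u(1-u))+1/(1-u)^2$ and integrating bounds the inner integral by $O(z/M)$, yielding $\nu_r^{*2}(z)\le(C\gamma/M)\tilde{\rho}(z)$ for an absolute constant $C$; for $z<2M$ the convolution vanishes. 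The hypothesis $M>2/\delta$ ensures $\tilde{\rho}$ is increasing on $[M,\infty)$, which is what makes the convolution boundary-dominated. For $n\ge 3$, I induct: substituting $\nu_r^{*(n-1)}(z-w)\le A_2^{n-2}\tilde{\rho}(z-w)$ into $\nu_r^{*n}(z)=\int_M^{z-(n-1)M}\nu_r(w)\nu_r^{*(n-1)}(z-w)\,dw$ and enlarging the domain to $[M,z-M]$ (the extra integrand is nonnegative) reduces the right-hand side to $A_2^{n-2}\nu_r^{*2}(z)\le A_2^{n-1}\tilde{\rho}(z)$.

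Summing the resulting series,
\[
\sum_{n\ge 1}\frac{(1+\gamma)^n A_2^{n-1}}{n!}=\frac{e^{(1+\gamma)A_2}-1}{A_2}=(1+\gamma)+O(\gamma A_2)\le 1+\bigl(1+O(1/M)\bigr)\gamma,
\]
which yields $C_M=1+O(1/M)$, a bounded constant depending only on $M$. The truncation parameter $a_k^1$ is well defined because $\nu_r([M,\infty))=\infty$, so the normalization equation $(1+C_M\gamma)\nu_r([M,a_k^1])=e^{\lambda_T}-1$ has a unique finite solution. The main obstacle is obtaining the $O(\gamma/M)$ prefactor in the $n=2$ convolution estimate uniformly in $z\ge 2M$: any larger prefactor (polynomial in $z$ or in $\log k$) would destroy the geometric decay needed in the summation step and leave $C_M$ growing with $k$.
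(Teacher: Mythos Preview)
Your proof is correct and follows essentially the same approach as the paper: match the atom at $0$ exactly, then bound the continuous density of $S_0$ on $[M,\infty)$ via the convolution estimate $\nu_r^{*n}(z)\le C^{n-1}\tilde\rho(z)$ and sum the compound-Poisson series. The only cosmetic difference is that the paper isolates the convolution bound as a separate Fact~\ref{fact:col_zm_int_x} and proves the $n=2$ case by the symmetry observation that one of $y_1,\,z-y_1$ exceeds $z/2$, rather than your partial-fraction computation; both yield an $O(1/M)$ constant and the same final estimate.
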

\begin{proof}
Let $B_{0}\sim\pois\left(D-1-\gamma\right)$ and $Y_{i}$ be i.i.d.\ samples of distribution $\frac{1+\gamma}{D-1-\gamma}\nu_{r}1_{\le a_{k}}$. We have
\[
    P(S_{0}=0) = P(B_{0}=0) = e^{-(D-1-\gamma)}
    \le\frac{1}{k\log k}e^{1+\gamma-\beta}
    .
\]
Since $\nu_{r}$ is supported on $[M,\infty)$ and is absolutely continuous,
for $z\ge M$,
\begin{align*}
f_{S_{0}}(z)
& =\frac{d}{dz}P\Big(\sum_{i=1}^{B_{0}}Y_{i}\le z\Big)
\le \sum_{n=1}^{\lfloor z/M\rfloor}
P(B_{0}=n)
    \frac{d}{dz}\bigg[
    \int_{\sum y_{i}\le z}
    \left(\frac{1+\gamma}{D-1-\gamma}\right)^{n}
    \nu_{r}(dy_{1})\cdots\nu_{r}(dy_{n})
   \bigg]
\\&\le \frac{e^{1+\gamma-\beta}}{k\log k}
   \sum_{n=1}^{\lfloor z/M\rfloor}
   \frac{1}{n!}
   \frac{d}{dz}\bigg[
        \int_{y_{i}\ge M,\sum_{i=1}^{n}y_{i}\le z}
        \frac{(1+\gamma)^{n}\gamma^{n}}
        {y_{1}^{2}y_{2}^{2}\cdots y_{n}^{2}}
        e^{\delta(y_{1}+\cdots+y_{n})}
        dy_{1}\cdots dy_{n}
    \bigg]
\\& =\frac{e^{1+\gamma-\beta}}{k\log k}\sum_{n=1}^{\lfloor z/M\rfloor}\frac{(1+\gamma)^{n}\gamma^{n}}{n!}e^{\delta z}\int_{y_{i}\ge M,\sum_{i=1}^{n-1}y_{i}\le z-M}\frac{1}{y_{1}^{2}\cdots y_{n-1}^{2}(z-\sum_{i=1}^{n-1}y_{i})^{2}}dy_{1}\cdots dy_{n-1}.
\end{align*}
Applying Fact~\ref{fact:col_zm_int_x} below for $n\ge2$, we have that
for $z\ge M$,
\begin{align*}
f_{S_{0}}(z)dz & \le\frac{e^{1+\gamma-\beta}}{k\log k}
\bigg((1+\gamma)\gamma+\sum_{n=2}^{\infty}\frac{((1+\gamma)\gamma C_{M})^{n}}{n!}\bigg)\frac{1}{z^{2}}e^{\delta z}dz
\\&\le \frac{e^{1+\gamma-\beta}}{k\log k}(1+C_{M}'\gamma)\frac{\gamma}{z^{2}}e^{\delta z}dz=\frac{e^{1+\gamma-\beta}}{k\log k}(1+C'_{M}\gamma)\nu_{r}(dz).
\end{align*}
The desired result follows from the last equation and the fact that
$P(S_{0}\in(0,M))=0$.
\end{proof}
\begin{fact}
\label{fact:col_zm_int_x}There exist constant $C_{M}$ such that
for $n\ge2$ and $z\ge nM$,
\[
\int_{y_{i}\ge M,\sum_{i=1}^{n-1}y_{i}\le z-M}\frac{1}{y_{1}^{2}\cdots y_{n-1}^{2}(z-\sum_{i=1}^{n-1}y_{i})^{2}}dy_{1}\cdots dy_{n-1}\le\frac{C_{M}^{n}}{z^{2}}.
\]
\end{fact}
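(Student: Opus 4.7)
The plan is to prove the claim by induction on $n\ge 2$, using the fact that each one-dimensional slice of the integral is itself a two-variable integral of the same type. The engine of the induction is the partial fraction identity $\frac{1}{y^2(Z-y)^2} = \frac{1}{Z^2}\bigl(\frac{1}{y^2}+\frac{1}{(Z-y)^2}\bigr) + \frac{2}{Z^3}\bigl(\frac{1}{y}+\frac{1}{Z-y}\bigr)$, which after integration over $y\in[M,Z-M]$ yields a bound of the form $C_M^{(0)}/Z^2$ with $C_M^{(0)}$ independent of $Z\ge 2M$ (the logarithmic term $\log(Z/M)/Z$ is uniformly bounded on $[2M,\infty)$, and $1/(MZ^2)$ is already of the right order). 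This disposes of the base case $n=2$.

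For the inductive step, with $n\ge 3$, $z\ge nM$, and the induction hypothesis $I_{n-1}(z')\le C_M^{n-1}/(z')^2$ for all $z'\ge(n-1)M$ (where $I_m(\cdot)$ denotes the left-hand side of the displayed inequality with $n-1$ replaced by $m-1$), I would fix $y_1,\dots,y_{n-2}$ and integrate out $y_{n-1}$ first. Writing $s=\sum_{i=1}^{n-2} y_i\ge(n-2)M$, the non-trivial range is $s\le z-2M$, so $z-s\ge 2M$ and the base-case estimate applied with $Z=z-s$ gives $\int_M^{z-s-M} \frac{dy_{n-1}}{y_{n-1}^2(z-s-y_{n-1})^2}\le C_M^{(0)}/(z-s)^2$. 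Pulling this factor out and enlarging the remaining domain from $\{y_i\ge M,\ s\le z-2M\}$ to $\{y_i\ge M,\ s\le z-M\}$ (permissible since the integrand is positive) yields exactly $C_M^{(0)}\cdot I_{n-1}(z)$. The induction hypothesis then gives $I_n(z)\le C_M^{(0)}\cdot C_M^{n-1}/z^2$, and setting $C_M := \max(C_M^{(0)},1)$ closes the induction.

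This is routine analytic bookkeeping rather than a conceptually hard step, so no real obstacle is expected. The one subtlety worth flagging is to ensure that the recursion constant does not grow with $n$: the two-variable slice must genuinely be dominated by an $M$-dependent but $z$-independent multiple of $1/(z-s)^2$, which requires observing that both $\tfrac{2}{M(Z)^2}$ and $\tfrac{4\log(Z/M)}{Z^3}$ are uniformly bounded by $C_M^{(0)}/Z^2$ on $Z\ge 2M$, and that the resulting single constant $C_M^{(0)}$ then compounds cleanly through the induction into the advertised geometric factor $C_M^n$.
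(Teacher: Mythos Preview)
Your proof is correct and follows essentially the same induction-on-$n$ strategy as the paper: bound the innermost one-dimensional integral $\int_M^{Z-M}\frac{dy}{y^2(Z-y)^2}$ by a constant times $1/Z^2$, then peel off variables one at a time. The only difference is cosmetic: for the base case the paper uses the symmetry observation that one of $y,\,Z-y$ is at least $Z/2$, giving $\int_M^{Z-M}\frac{dy}{y^2(Z-y)^2}\le \frac{2}{(Z/2)^2}\int_M^\infty y^{-2}\,dy=\frac{8}{MZ^2}$, whereas you compute the same integral via partial fractions; both yield a constant of the form $C/M$ and the inductive step is identical.
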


The proof of Fact~\ref{fact:col_zm_int_x} is postponed to Section~\ref{sec:appd}.  Next consider the independent sum of $S_{0}+S_{1}$.
\begin{lem}
\label{lem:col_zm_S0S1bound}For any $M>M(\alpha_{0})\vee\frac{2}{\delta}$
and constant $C_{M}$ specified in Lemma~\ref{lem:col_zm_S0bound},
\begin{equation}
Z_{m}\succ S_{0}+S_{1}\succ\frac{e^{\gamma+1-\beta}}{k\log k}\big[\nu_{S_{1}}+(1+\alpha_{0})(1+C_{M}\gamma)\exp\big(\kappa(e^{-\alpha\delta}-1)\big)\nu_{r}1_{\le a_{k}}\big].\label{eq:col_zm_zmbound}
\end{equation}
\end{lem}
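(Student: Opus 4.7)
The first inequality $Z_m\succ S_0+S_1$ has already been established in the paragraph preceding Lemma~\ref{lem:col_zm_S0bound}, so the plan is to prove only the second inequality. The natural tool is Proposition~\ref{prop:rec_stocfact}(1), comparing the density of $S_0+S_1$ with that of the target measure on the support of the latter. Because convolution with an independent summand preserves stochastic dominance, applying Lemma~\ref{lem:col_zm_S0bound} we may replace $S_0$ with a variable whose density is bounded above by $\frac{e^{1+\gamma-\beta}}{k\log k}(\delta_0 + (1+C_M\gamma)\nu_r(y) 1_{y \le a_k^1})$ and prove the density bound for its convolution with $\mu_{S_1}$.

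Write $S_1=N\alpha$ with $N\sim\pois(\kappa)$. The atoms $\{j\alpha:j\ge 0\}$ in the convolution inherit mass exactly $\frac{e^{1+\gamma-\beta}}{k\log k} P(N=j)$, which matches the $\nu_{S_1}$ term of the target precisely. On the continuous region $[M,a_k]$, the convolution carries density $\sum_{j\ge 0}\frac{\kappa^j e^{-\kappa}}{j!}(1+C_M\gamma)\frac{e^{1+\gamma-\beta}}{k\log k}\nu_r(z-j\alpha)$, so the continuous comparison reduces to
\[
\sum_{j\ge 0} \frac{\kappa^j e^{-\kappa}}{j!}\,\nu_r(z-j\alpha) \le (1+\alpha_0)\exp\bigl(\kappa(e^{-\alpha\delta}-1)\bigr)\nu_r(z), \qquad z\in[M,a_k].
\]

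Substituting $\nu_r(y) = \gamma e^{\delta y}/y^2$, cancelling $\gamma e^{\delta z}$, and using the Poisson moment generating identity $\sum_j \frac{\kappa^j e^{-\kappa}}{j!} e^{-\alpha\delta j} = \exp(\kappa(e^{-\alpha\delta}-1))$, this rewrites as
\[
\E_{\tilde N}\!\left[\frac{1}{(z-\tilde N\alpha)^2}\right] \le \frac{1+\alpha_0}{z^2}, \qquad \tilde N \sim \pois(\kappa e^{-\alpha\delta}).
\]
The plan is to bound this expectation by splitting on whether $\tilde N\alpha \le \alpha_0 z/4$. On that event, $(1-\tilde N\alpha/z)^2 \ge 1-\alpha_0/2$ directly yields $1/(z-\tilde N\alpha)^2 \le (1+\alpha_0)/z^2$; on the complementary event, the support constraint $z-\tilde N\alpha \ge M$ inherited from $\nu_r$ furnishes the crude bound $1/(z-\tilde N\alpha)^2 \le 1/M^2$, and the contribution is controlled by a Poisson tail estimate $P(\tilde N > \alpha_0 z/(4\alpha)) \le \alpha_0 M^2/z^2$.

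The main obstacle is verifying this tail estimate uniformly across the whole range $z\in[M,a_k]$. For large $z$, standard Poisson tail bounds deliver super-exponential decay in $z$, which easily dominates the $1/z^2$ factor. The delicate regime is $z$ close to $M$, where one needs $\alpha_0 M/(4\alpha)$ to already sit well inside the tail of $\pois(\kappa e^{-\alpha\delta})$; this forces $M$ to be taken large compared to $\alpha/\alpha_0$ and to $\kappa$. The hierarchy of parameter choices $(\delta,\kappa,\alpha_0,M,\sigma,\gamma,\epsilon)$ declared in Section~\ref{sec:rec23}, in which $M$ is chosen only after $\alpha_0$ and $\kappa$, is exactly what permits this calibration.
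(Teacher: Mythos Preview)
Your overall architecture matches the paper's exactly: invoke Lemma~\ref{lem:col_zm_S0bound}, convolve with $\nu_{S_1}$, observe that the atomic part reproduces $\nu_{S_1}$ on the nose, and reduce the continuous part to the density inequality
\[
\sum_{j\ge 0}\frac{\kappa^{j}e^{-\kappa}}{j!}\,\nu_r(z-j\alpha)\;\le\;(1+\alpha_0)\exp\bigl(\kappa(e^{-\alpha\delta}-1)\bigr)\,\nu_r(z),\qquad z\ge M.
\]
Your tilted-Poisson rewriting of this inequality is correct and is a nice way to isolate the $(1-j\alpha/z)^{-2}$ factor.

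Where you diverge from the paper is in how you dispatch the tail of the sum. You propose a $z$-proportional cutoff $\tilde N\alpha\le \alpha_0 z/4$ and then need the uniform Poisson tail estimate $P(\tilde N>\alpha_0 z/(4\alpha))\le \alpha_0 M^2/z^2$ for all $z\ge M$. This does work (e.g.\ via $\sup_{u\ge 0}u^{2}P(\tilde N>u)<\infty$ and taking $M$ large in terms of $\alpha_0,\alpha,\kappa$), but it is more laborious than necessary and, as you note, the calibration near $z=M$ is delicate. The paper instead uses a \emph{fixed} cutoff $N=N(\alpha_0)$ chosen so that $\sum_{n>N}1/n!\le \alpha_0/(2e)$, and for $n>N$ invokes the observation that the density $\gamma e^{\delta z}/z^{2}$ is monotone increasing on $(2/\delta,\infty)$; hence $\nu_r(z-n\alpha)\le \nu_r(z)$ whenever $z-n\alpha\ge M>2/\delta$. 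This handles the tail uniformly in $z$ with no Poisson estimate at all, and is precisely why the hypothesis $M>2/\delta$ appears in the statement. Your route never uses $M>2/\delta$, so you are effectively reproving a slightly different sufficient largeness condition on $M$.

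One small bookkeeping point: with your split the two contributions add to $(1+2\alpha_0)/z^{2}$ rather than $(1+\alpha_0)/z^{2}$; you should halve the constants in the cutoff or in the tail target to land exactly on the stated bound.
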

\begin{proof}
Letting $\nu_{S_{0}}^{+}:=\nu_{r}1_{\le a^1_{k}}$ where $a^1_{k}$ is defined in (\ref{eq:col_zm_S0bound}), we have
\begin{equation}
\nu_{S_{0}+S_{1}}=\frac{e^{1+\gamma-\beta}}{k\log k}(\delta_{0}*\nu_{S_{1}}+(1+C_{M}\gamma)\nu_{S_{0}}^{+}*\nu_{S_{1}})=\frac{e^{1+\gamma-\beta}}{k\log k}(\nu_{S_{1}}+(1+C_{M}\gamma)\nu_{S_{0}}^{+}*\nu_{S_{1}}).\label{eq:col_zm_S0S1}
\end{equation}
It is left to verify that $\nu_{S_{0}+S_{1}}^{+}:=\nu_{S_{0}}^{+}*\nu_{S_{1}}\succ(1+\alpha_{0})\exp\big(\kappa(e^{-\alpha\delta}-1)\big)\nu_{r}1_{\le a_{k}}$
where $a_{k}$ is chosen such that RHS of (\ref{eq:col_zm_S0S1}) has total mass 1. Recall that $S_{1}\deq\alpha\cdot\pois(\kappa)$. $\nu_{S_{0}+S_{1}}^{+}$ is absolutely continuous and supported on $[M,\infty)$.
For $z\ge M$ we have
\begin{align*}
f_{S_{0}+S_{1}}^{+}(z)
&=\sum_{n=0}^{\infty}
    \frac{\kappa^{n}e^{-\kappa}}{n!}
    f_{S_{0}}^{+}(z-n\alpha)
\le\sum_{n=0}^{\infty}
\frac{\kappa^{n}e^{-\kappa}}{n!}
\frac{\gamma e^{\delta(z-n\alpha)}}{(z-n\alpha)^{2}}
\Ind{z-n\alpha\ge M}\\
\end{align*}
To control the $(z-n\alpha)^{-2}$ term, we first choose for any $\alpha>0$ a $N=N(\alpha_0)$ such that $\sum_{n=N+1}^{\infty}\frac{1}{n!}\le\frac{1}{2e}\alpha_{0}$ and then choose $M(\alpha_{0})$ such that for $M>M(\alpha_{0})$, $n\le N$ and $z\ge M$,
\begin{equation}
    (1-n\alpha/z)^{-2}\le (1-n\alpha/z)^{-2} \le 1 + \alpha_{0}/2
.  \label{eq:col_zm_Mconstraint1}
\end{equation}
Observe that $\frac{\gamma}{z^{2}}e^{\delta z}$ is monotone increasing for $z\in(\frac{2}{\delta},\infty)$. For all $M>M(\alpha_{0})\vee\frac{2}{\delta}$ and $z\ge M$,
\begin{align*}
f_{S_{0}+S_{1}}^{+}(z)dz
&\le \frac{\gamma e^{\delta z}}{z^{2}}dz
    \sum_{n=0}^{N} \frac{\kappa^{n}e^{-\kappa}}{n!}
    \frac{e^{-n(\alpha\delta)}}{(1-n\alpha/z)^{2}}
+ \frac{\gamma e^{\delta z}}{z^{2}}dz\sum_{n=N+1}^{\infty}\frac{\kappa^{n}e^{-\kappa}}{n!}\\
 & \le(1+\alpha_{0})\exp[\kappa(e{}^{-\alpha\delta}-1)]\nu_{r}(dz).
\end{align*}
The proof finishes by cutting $\nu_{r}$ at the place such that (\ref{eq:col_zm_S0S1}) has the total mass 1.
\end{proof}
Finally, for $m=1$ and $k\ge k_0$ such that $\frac{D}{\log k}\le(1+\gamma)\vee(1+\alpha_{0})$,
we have
\begin{equation}
Z_{1}
\deq \pois(p_{k}^{\neq}D)
    \otimes \frac{1}{p_{k}^{\neq}}\nu_{k}^{\neq}
\prec
\left(
    \pois\left(\frac{1}{2}\kappa\right)\otimes\delta_{\alpha}
\right)\oplus\left(
    \pois(\gamma p_{r}^{\neq})
    \otimes\frac{1}{\gamma p_{r}^{\neq}}\nu_{r}^{\neq}
\right)
,\label{eq:col_zm_z1}
\end{equation}
where the second term is $0$ with probability $\exp(-\gamma p_{r}^{\neq})$.

\subsection{Distribution of \texorpdfstring{$\sum_{m=2}^{k}\exp(-Z_{m})$}{the sum of exp(-Zm)}}

In this section we analysis the distribution of $\sum_{m=2}^{k}\exp(-Z_{m})= (k-1)\otimes \exp(-Z_m)$.  Let $\psi(x):=e^{-x}$. An easy calculation gives that
\[
    \psi\circ\nu_\star(dx)
    = (1-\kappa)\delta_{1}(dx)
    + \kappa\delta_{\psi(\alpha)}(dx)
    + \frac{\gamma}{(\log x)^{2}}\frac{1}{x^{1+\delta}}
        \Ind{0<x<\psi(M)}dx.
\]
Define
\begin{equation}\label{eq:col_sumzm_CZ}
    C_Z:=C_Z(\delta,\kappa,\alpha_0,M,\gamma)
    =(1+\alpha_{0})(1+C_{M}\gamma)\exp\big(\kappa(e^{-\alpha\delta}-1)\big).
\end{equation}
Now (\ref{eq:col_zm_zmbound}) can be rewritten  as
\begin{equation}\label{eq:col_sumZm_psiZm}
    \psi(Z_{m}) \prec
    \frac{1}{k\log k}e^{\gamma+1-\beta}
    \Big[
        \psi\circ\nu_{S_{1}}
        + C_{Z}\frac{\gamma}{(\log x)^{2}}\frac{1}{x^{1+\delta}}
            \Ind{c_{k}<x<\psi(M)}
    \Big]
    .
\end{equation}

As $k$ grows, the density of $\psi(Z_{m})$ diverges quickly around $0$ and the probability of seeing $Z_m\ge x$ for more than one $m\in[k]$ is $o(\frac{1}{k})$ for any fixed $x>0$. Hence intuitively,
\[
\nu_{k\otimes\psi(Z_{m})}\approx\nu_{\max_{m\in[k]}\psi(Z_{m})}\approx k\cdot\nu_{\psi(Z_{m})}.
\]
\begin{lem}
\label{lem:col_sumzm}Fix $\delta=\frac{1}{2}$. For any $M>M(\alpha_{0})\vee\frac{2}{\delta}$ such that (\ref{eq:col_zm_S0S1}) holds and $\sigma,\epsilon>0$, $k\ge k_0$,
\begin{align*}
(k-1)\otimes\psi(Z_{m})
&\prec
\frac{e^{\gamma+1-\beta}}{\log k}
\left[
    (\psi+\sigma)\circ\nu_{S_{1}}
    + (1+\epsilon)C_{Z}
        \frac{\gamma}{(\log x)^{2}}\frac{1}{x^{1+\delta}}
        1_{x\le\psi(M)}
\right]1_{\ge c_{k}}
+ \frac{\epsilon}{\log k}\delta_{\infty},
\end{align*}
where $(\psi+\sigma)(x):=\psi(x)+\sigma$ and $C_Z$ is defined in \eqref{eq:col_sumzm_CZ}.
\end{lem}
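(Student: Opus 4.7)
The plan is to exploit the two-scale structure of the dominating measure $\mu_\psi$ on the right-hand side of \eqref{eq:col_sumZm_psiZm}: an ``atomic'' component of total mass $\rho := e^{\gamma+1-\beta}/(k\log k)$ supported on $\{\psi(j\alpha)\}_{j\ge 0}\subset(0,1]$ with Poisson weights from $\psi\circ\nu_{S_1}$, and a ``continuous tail'' component of mass $1-\rho$ supported in $(c_k,\psi(M))$, which lies well below $\sigma$ since $\psi(M)\ll\sigma$ for $M$ large. Since $\psi(Z_m)\prec\mu_\psi$ and stochastic dominance passes through independent convolution, it suffices to bound $(k-1)\otimes\mu_\psi$.

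I would realize each $X_m\sim\mu_\psi$ as a Bernoulli mixture with indicator $I_m\sim\mathrm{Bern}(\rho)$: if $I_m=1$ draw $Y_m\sim\psi\circ\nu_{S_1}$, otherwise draw $W_m$ from the normalized continuous tail. Let $N:=\sum_m I_m\sim\mathrm{Bin}(k-1,\rho)$, so $\lambda:=\E[N]=(1+o(1))e^{\gamma+1-\beta}/\log k$, and $P(N\ge 2)=O(\lambda^2)=O(1/\log^2 k)\le\epsilon/(3\log k)$ for $k\ge k_0(\epsilon)$. This ``multiple atomic draws'' event is absorbed into the $(\epsilon/\log k)\delta_\infty$ term of the target.

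On $\{N\le 1\}$, write $S=A+T$, where $A\in\{0\}\cup\mathrm{supp}(\psi\circ\nu_{S_1})$ is the atomic contribution and $T=\sum_{m:I_m=0}W_m$. A direct computation gives $\E[W_m]=O(\rho)$ using that $\int_{c_k}^{\psi(M)}y^{-\delta}(\log y)^{-2}\,dy$ remains bounded for $\delta=1/2$, hence $\E[T]=O(1/\log k)$; a Markov/Bernstein bound, exploiting the uniform bound $W_m\le\psi(M)$, then yields $P(T>\sigma)\le\epsilon/(3\log k)$ for $\gamma$ chosen small relative to $\sigma\epsilon$. This event is likewise pushed into $\delta_\infty$. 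On the remaining ``good'' event of probability $1-O(\epsilon/\log k)$, we have $T\le\sigma$ and so $S\le A+\sigma$, whence the atomic piece of $S$ is stochastically dominated by $(\psi+\sigma)\circ\nu_{S_1}$ with Poissonized weight $\lambda\approx\frac{e^{\gamma+1-\beta}}{\log k}$, matching the first term of the target. For the continuous tail piece, when $A=0$ the event $\{S>x\}$ for $x\in(c_k,\psi(M))$ reduces to $\{T>x\}$; the heavy-tail density $\propto x^{-3/2}$ makes $T$ dominated by $\max_m W_m$, and a first-order union bound gives $P(\max_m W_m>x)\le(1+\epsilon/2)\,\frac{e^{\gamma+1-\beta}}{\log k}C_Z\int_x^{\psi(M)}\frac{\gamma}{(\log y)^2 y^{1+\delta}}\,dy$, which is precisely the target tail density with the $(1+\epsilon)$ slack absorbing the lower-order ``two simultaneous large draws'' correction. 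Proposition~\ref{prop:rec_stocfact}(2) then glues the atomic and tail estimates into the claimed stochastic dominance.

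The main obstacle is the heavy tail of the continuous part $\mu_\psi^t$: the density $\propto x^{-3/2}$ means straightforward concentration is weak when $c_k\to 0$, so the Markov bound on $T$ must be coupled with a truncation calibrated by the parameter hierarchy $\epsilon\ll\gamma\ll\sigma\ll\alpha_0\ll M^{-1}$ from the definition of $\nu_\star$. A secondary subtlety is quantifying the ``sum equals max'' approximation for the tail: one must verify that the probability of two simultaneous large tail draws is $o(1/\log k)$ times the one-sample tail density so that it fits into the $(1+\epsilon)$ multiplicative slack. These two calibrations are exactly what force the explicit parameter ordering implicit in the statement.
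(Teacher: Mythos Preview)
Your decomposition into an atomic part (governed by $N\sim\mathrm{Bin}(k-1,\rho)$) and a continuous heavy-tail part $T$ is exactly how the paper organizes the argument, and your treatment of the events $\{N\ge 2\}$ and $\{N=1,\ T>\sigma\}$ is essentially correct (the latter has probability $O(\lambda/\log k)=O(1/\log^2 k)$, so no constraint linking $\gamma$ and $\epsilon$ is needed; your remark ``$\gamma$ chosen small relative to $\sigma\epsilon$'' is both unnecessary and inconsistent with your own stated hierarchy $\epsilon\ll\gamma$).

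The genuine gap is the continuous tail. You write that ``the heavy-tail density $\propto x^{-3/2}$ makes $T$ dominated by $\max_m W_m$, and a first-order union bound gives $P(\max_m W_m>x)\le\cdots$''. But $T\ge\max_m W_m$ almost surely, so a bound on $P(\max_m W_m>x)$ does not control $P(T>x)$; what you actually need is
\[
P\Bigl(\textstyle\sum_{m}W_m>x\Bigr)\le(1+\epsilon)\,kP(W>x)+\frac{\epsilon}{\log k}
\quad\text{uniformly for }x\in(c_k,\psi(M)).
\]
This is precisely the content of the paper's Lemma~\ref{lem:col_sumzm_ktimesU}, and it cannot be obtained from a union bound plus a ``two simultaneous large draws'' correction. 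The difficulty is at the \emph{small} end $x\sim t_k\sim c_k$: there the event $\{\sum W_m>x\}$ is not driven by one or two large jumps but by the accumulation of many contributions of order $t_k$, i.e.\ the stable-law regime. The paper handles this by proving (Lemma~\ref{lem:col_sumzm_stablelaw}) that $t_k^{-1}\sum W_m$ converges to the L\'evy law of index $\delta$ and then checking the explicit inequality $P(\widetilde U>c)\le c^{-\delta}$ from the closed-form L\'evy CDF; this comparison is why $\delta=\tfrac12$ is fixed in the hypothesis. For intermediate $x\in[Nt_k,(1-\sigma)\psi(M)]$ the paper uses the decomposition $\sum W_m=W_{(1)}+W_R$ and a Chebyshev bound on $W_R$ conditional on $W_{(1)}$, which controls the ``many small draws exceed $cx$'' event that your second-order correction misses. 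Your proposal treats this entire step as a heuristic; filling it in is most of the work.
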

\begin{proof}
We recall the RHS of \eqref{eq:col_sumZm_psiZm} and treat its discrete part and continuous part separately. Let $p_{1}:=\frac{e^{\gamma+1-\beta}}{k\log k}$, $\mu_{Z}^{1}:=\psi\circ\nu_{S_{1}}$ and $\mu_{Z}^{2}(dx):=\frac{p_{1}}{1-p_{1}}\frac{\gamma}{(\log x)^{2}}x^{-(1+\delta)}1_{c_{k}<x\le\psi(M)}dx$.
Among the $(k-1)$ i.i.d.\ samples from the RHS of \eqref{eq:col_sumZm_psiZm},
$b\sim\mathrm{Binom}(k-1,p_{1})$ of them comes from $\mu_{Z}^{1}$ and the rest comes from $\mu_{Z}^{2}$. Choose $C_{b}>0$ such that for any $k\ge k_0$, $P(b\ge2)\le C_{b}\log^{-2}k$. It follows that
\begin{align}
(k-1)\otimes\psi(Z_{m}) & \prec\left(\mathrm{Binom}(k,p_{1})\otimes\mu_{Z}^{1}\right)\oplus\left(k\otimes\mu_{Z}^{2}\right)\nonumber \\
&\prec
\Big[
    (1-kp_{1})\cdot k\otimes\mu_{Z}^{2}
    + kp_{1}\cdot\left(\mu_{Z}^{1}\oplus(k\otimes\mu_{Z}^{2})\right)
\Big]1_{\ge c_k}
    +\frac{C_{b}}{\log^{2}k}\delta_{\infty}
    .\label{eq:col_sumZm_temp2}
\end{align}
We will show in Lemma~\ref{lem:col_sumzm_ktimesU} that for any $\epsilon>0$ and $k\ge k_0$,
\begin{equation}
k\otimes\mu_{Z}^{2}
\prec(1+\epsilon)k\cdot\mu_{Z}^{2}1_{\ge c^0_{k}}
+\frac{\epsilon}{2\log k}\delta_{\infty}.\label{eq:col_sumZm_contpart}
\end{equation}
Therefore for any $\sigma>0$, there exists $C_{\sigma}>0$ such that for $k\ge k_0$, $P(k\otimes\mu_{Z}^{2}\ge\sigma)\le C_{\sigma}\log^{-1}k$ and
\begin{align*}
\textup{RHS of \eqref{eq:col_sumZm_temp2}}
&\prec
\Big[
(1-kp_{1})\cdot k\otimes\mu_{Z}^{2}
+ kp_{1}\cdot(\mu_{Z}^{1}*\delta_{\sigma})
+ \frac{kp_{1}C_\sigma}{\log k}\cdot\delta_{\infty}
\Big]1_{\ge c_k}
+\frac{C_{b}}{\log^{2}k}\delta_{\infty}
 \\&\prec
\Big[
(1+\epsilon)k(1-kp_{1})\cdot\mu_{Z}^{2}1_{\ge c^0_k}
+ kp_{1}\cdot(\mu_{Z}^{1}*\delta_{\sigma})
\Big]1_{\ge c_k}
+\frac{\epsilon}{\log k}\delta_{\infty}
\\&\prec
\frac{e^{\gamma+1-\beta}}{\log k}
\left[(\psi+\sigma)\circ\nu_{S_{1}}
    +(1+\epsilon)C_{Z}\frac{\gamma}{(\log x)^{2}}\frac{1}{x^{1+\delta}}
\right] 1_{\ge c_{k}}
+\frac{\epsilon}{\log k}\delta_{\infty}
.
\end{align*}
where in the last step, we observe that removing the $1_{\ge c^0_k}$ after $\mu^2_Z$ will only make the measure inside the square bracket stochastically larger after cutting from below.
\end{proof}
In the remaining of the section, we check that \eqref{eq:col_sumZm_contpart} is true. We will
henceforth omit the $O(1)$ factor $(k\log k)\cdot\frac{p_{1}}{1-p_{1}}$
by absorbing it into $\gamma$ and let
\begin{equation}\label{eq:col_sumZm_defU}
U\sim\mu_{U} :=\mu_{Z}^{2}
=\frac{1}{k\log k}\frac{\gamma}{(\log x)^{2}}
x^{-(1+\delta)}\Ind{c_{k}<x\le\psi(M)}dx.
\end{equation}
Measure $\mu_U$ resembles distributions that converge to stable law. However, we can not directly apply the usual proof of convergence for stable laws (cf.\ Section 3.7 of \cite{durrett2010probability}, or the reference there) to $k\otimes U$, since the expression of $\mu_{U}$ also depends on $k.$ With some modification, we show the following result.
\begin{lem}
\label{lem:col_sumzm_stablelaw}For any $\delta,\gamma\in(0,1),$
$M>\frac{2}{\delta}$, let $t_{k}:=\inf\{t:\mu_{U}([t,\infty))<1/k\}$,
then $k\otimes(t_{k}^{-1}U)$ converges weakly to the stable law with
index $\delta$ and characteristic function
\[
    \exp\{-b_\star|t|^{\delta}(1+i\mathrm{sgn}(t)\tan(\pi\delta/2))\},
\]
where $\mathrm{sgn}$ is the sign function and $b_\star=\delta\int_{0}^{\infty}(\cos x-1)x^{-(1+\delta)}dx=-\cos(\frac{\pi}{2}\delta)\Gamma(1-\delta)$.
\end{lem}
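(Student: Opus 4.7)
My plan is to adapt the standard characteristic-function proof of stable-law convergence (cf.~Theorem~3.7.2 of \cite{durrett2010probability}) to the present setting, where $\mu_U$ itself depends on $k$ through the cutoff $c_k$ and the slowly varying factor $(\log x)^{-2}$. Writing $\chi_k(t):=\E e^{itU/t_k}$, convergence of $[\chi_k(t)]^k$ to the claimed stable characteristic function is equivalent, via the estimate $(1+z_k/k)^k\to e^z$ whenever $z_k\to z$, to establishing the pointwise limit
\[
\lim_{k\to\infty} k(\chi_k(t)-1) \;=\; -b_\star|t|^\delta\bigl(1+i\,\mathrm{sgn}(t)\tan(\pi\delta/2)\bigr),
\]
so this is what I would target.

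The heart of the argument is a vague scaling limit for $k\mu_U$ at scale $t_k$. Integrating the explicit density of $\mu_U$ on $(t,\psi(M)]$ and using $M>2/\delta$ to keep the integrand monotone gives $F(t):=\mu_U([t,\infty))\sim \frac{\gamma}{\delta k\log k}\cdot t^{-\delta}/(\log t)^{2}$ as $t\to 0$; setting $F(t_k)\sim 1/k$ yields $-\log t_k\sim \frac{1}{\delta}\log\log k$ together with the asymptotic identity $\gamma t_k^{-\delta}/\bigl(\delta\log k\,(\log t_k)^2\bigr)\to 1$. Substituting $y=x/t_k$ and multiplying by $k$, the density of $k\mu_U(t_k\,dy)$ rewrites as
\[
\delta\cdot\frac{(\log t_k)^2}{(\log t_k+\log y)^2}\cdot y^{-(1+\delta)}\,\Ind{c_k/t_k<y\le \psi(M)/t_k}\,dy\cdot(1+o(1)),
\]
and since $-\log t_k\to\infty$, the prefactor tends to $1$ pointwise on $(0,\infty)$, giving vague convergence $k\mu_U(t_k\,dy)\Rightarrow \delta y^{-(1+\delta)}\,dy$. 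I then write $k(\chi_k(t)-1)=\int(e^{ity}-1)\,k\mu_U(t_k\,dy)$, use the envelope $|e^{ity}-1|\le \min(|t|y,2)$, which is integrable against $y^{-(1+\delta)}$ on $(0,\infty)$ for $\delta\in(0,1)$, and pass to the limit by dominated convergence. The resulting integral is evaluated via the classical identities $\int_0^\infty(\cos u-1)u^{-(1+\delta)}du=-\cos(\pi\delta/2)\Gamma(1-\delta)/\delta$ and $\int_0^\infty \sin u\cdot u^{-(1+\delta)}du=\sin(\pi\delta/2)\Gamma(1-\delta)/\delta$, whose combination yields precisely the claimed form with constant $b_\star$.

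The main technical obstacle I anticipate is justifying dominated convergence uniformly in $k$ against the slowly varying prefactor $(\log t_k)^2/(\log t_k+\log y)^2$, which blows up as $y\to 0$ for each fixed $k$. The resolution is a Potter-type bound: for any $\epsilon>0$, this prefactor is dominated by $(1+\epsilon)(y^{\epsilon}\vee y^{-\epsilon})$ for all sufficiently large $k$ and all $y>0$, and choosing $\epsilon<\delta\wedge(1-\delta)$ keeps the envelope $\min(|t|y,2)(y^{\epsilon}\vee y^{-\epsilon})y^{-(1+\delta)}$ integrable on $(0,\infty)$. The lower truncation at $c_k$ is harmless because a parallel computation, using that $c_k$ is chosen to make $\mu_U$ a probability measure, gives $c_k\sim k^{-1/\delta}$ up to slowly varying factors, whereas $t_k\sim(\log k)^{-1/\delta}$; hence $c_k/t_k\to 0$ polynomially, which neither obstructs the vague limit nor spoils the dominated-convergence bound.
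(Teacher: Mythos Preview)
Your approach is correct but genuinely different from the paper's. The paper proceeds by a big-jump/small-jump decomposition at a truncation level $\omega t_k$: it first shows that the number of summands exceeding $\omega t_k$ converges to $\pois(\omega^{-\delta})$ and that their (rescaled) conditional law converges, giving a compound-Poisson limit for the large part; it then controls the centered sum of the small part by a direct second-moment bound from Fact~\ref{fact:col_sumzm_stableintegr}, obtaining an error of order $\omega^{1-\delta/2}$; finally it sends $\omega\to 0$ and evaluates the resulting integral by a contour-rotation argument. Your route instead establishes the vague limit $k\,\mu_U(t_k\,\cdot)\Rightarrow \delta y^{-(1+\delta)}dy$ and pushes the whole computation through the single identity $k(\chi_k(t)-1)=\int(e^{ity}-1)\,k\mu_U(t_k\,dy)$, invoking dominated convergence with a Potter-type envelope to absorb the slowly varying factor $(\log t_k)^2/(\log t_k+\log y)^2$. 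Both arguments ultimately rest on the same tail asymptotics (your $F(t)\sim\cdots$ is exactly Fact~\ref{fact:col_sumzm_stableintegr}), but your method trades the elementary two-scale truncation plus Chebyshev for a single global integrability estimate; it is slicker once the Potter bound is in hand, while the paper's version is more self-contained and avoids any appeal to regular-variation machinery. Your treatment of the lower cutoff $c_k$ (polynomial versus logarithmic in $k$) is also fine and matches what the paper uses implicitly.
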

In the proof we use the following calculus result, the proof of which is deferred to Section \ref{sec:appd}.
\begin{fact}
\label{fact:col_sumzm_stableintegr}Let $t_{k}$ be defined as in
Lemma \ref{lem:col_sumzm_stablelaw}, we have
\begin{enumerate}
\item $t_{k}=(1+o_{k}(1))(\frac{\gamma\delta}{\log k(\log\log k)^{2}})^{1/\delta}$
and therefore
\[
\frac{\gamma}{\delta}t_{k}^{-\delta}\log^{-2}t_{k}=(1+o_{k}(1))\log k.
\]

\item For any constant $c>0$,
\begin{align*}
\lim_{k\to\infty}kP(U\ge ct_{k}) & =\lim_{k\to\infty}t_{k}^{-1}\int_{ct_{k}}^{\infty}\frac{1}{k\log k}\frac{\gamma}{\log^{2}x}\frac{1}{x^{1+\delta}}dx=c^{-\delta},\\
\lim_{k\to\infty}k\E(t_{k}^{-1}U1_{U\le ct_{k}}) & =\lim_{k\to\infty}t_{k}^{-1}k\int_{0}^{ct_{k}}\frac{1}{k\log k}\frac{\gamma}{\log^{2}x}\frac{x}{x^{1+\delta}}dx=c^{1-\delta}\frac{\delta}{1-\delta},\\
\lim_{k\to\infty}k\E(t_{k}^{-2}U^{2}1_{U\le ct_{k}}) & =\lim_{k\to\infty}t_{k}^{-2}k\int_{0}^{ct_{k}}\frac{1}{k\log k}\frac{\gamma}{\log^{2}x}\frac{x^{2}}{x^{1+\delta}}dx=c^{2-\delta}\frac{\delta}{2-\delta}.
\end{align*}

\end{enumerate}
\end{fact}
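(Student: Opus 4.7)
The plan is to reduce the entire fact to a single one-step integration by parts. The change of variable $v=-\log x$ converts every integrand of the form $x^{p-1-\delta}/(\log x)^2$ on an interval $[a,b]\subset(0,1)$ into $e^{v(\delta-p)}/v^2$ on the interval $[-\log b,-\log a]$. After this substitution, each of the four integrals in the Fact becomes one of
\[
    \int_M^A \frac{e^{v\sigma}}{v^2}\,dv
    \quad\text{or}\quad
    \int_A^{L} \frac{e^{-v\sigma}}{v^2}\,dv
    \quad (\sigma>0),
\]
with $A=-\log(c\,t_k)\to\infty$ playing the role of the ``active'' endpoint; a single IBP on either factor gives
\[
    \sigma^{-1}A^{-2}e^{\pm A\sigma}\bigl(1+O(A^{-1})\bigr)
\]
as $A\to\infty$. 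This is the only analytic input needed; the rest is algebra.

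For part (1), I apply this estimate with $\sigma=\delta$ to the defining identity $\mu_U([t_k,\psi(M)])=(1+o(1))/k$, which after substitution reads $\frac{\gamma}{k\log k}\int_M^{-\log t_k}\frac{e^{v\delta}}{v^2}dv=(1+o(1))/k$. The endpoint asymptotic above converts this directly into $\frac{\gamma}{\delta}\,t_k^{-\delta}\log^{-2}t_k=(1+o_k(1))\log k$, which is already the second half of (1). To extract the explicit form of $t_k$, I take logs and iterate once: at zeroth order $\log t_k\sim-\delta^{-1}\log\log k$, hence $\log^2 t_k\sim\delta^{-2}(\log\log k)^2$; plugging back in yields $t_k^\delta=(1+o_k(1))\gamma\delta/[\log k\,(\log\log k)^2]$, the first half.

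For part (2), each of the three limits is $\gamma\, t_k^{-p}/\log k$ times an integral $I_p$ of $x^{p-1-\delta}/(\log x)^2$ with $p=0,1,2$, taken over $[ct_k,\psi(M)]$ for $p=0$ and over $[c_k,ct_k]$ for $p\in\{1,2\}$. Applying the IBP asymptotic at $A=-\log(ct_k)\sim-\log t_k$ (with $\sigma=\delta$ for $p=0$ and $\sigma=p-\delta$ for $p\in\{1,2\}$) gives uniformly
\[
    I_p=\frac{(ct_k)^{p-\delta}}{|p-\delta|\,\log^2 t_k}\,(1+o_k(1)).
\]
Multiplying through and using part (1) in the form $\gamma/[\log k\cdot t_k^\delta\log^2 t_k]=\delta(1+o_k(1))$ collapses the three limits to $c^{-\delta}$, $c^{1-\delta}\delta/(1-\delta)$, and $c^{2-\delta}\delta/(2-\delta)$ respectively, matching the statement.

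The only place that warrants care, and which I expect to be the main (minor) bookkeeping obstacle, is controlling the ``inactive'' endpoint of each integral. For $p=0$ the inactive endpoint sits at $v=M$ and contributes a term of order $M^{-2}e^{M\delta}$, which is bounded in $k$ and therefore dominated by the active-endpoint term $e^{A\delta}/A^2\to\infty$. For $p\in\{1,2\}$ the inactive endpoint is at $v=-\log c_k$; using the total-mass normalization $\mu_U([c_k,\psi(M)])=1$ together with the same IBP asymptotic one sees that $-\log c_k\sim\delta^{-1}\log k$, much larger than $-\log t_k\sim\delta^{-1}\log\log k$, so the tail beyond the active endpoint is exponentially suppressed in $A$ and absorbed into $(1+o_k(1))$. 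Once these two endpoint checks are in place, the whole Fact follows from a single application of integration by parts.
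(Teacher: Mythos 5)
Your argument is correct, and it reaches the conclusion by a mildly different (and more economical) route than the paper. The paper's proof of Part~1 never evaluates the tail integral asymptotically: it writes down the candidate $s_{k}=(\gamma\delta/(\log k(\log\log k)^{2}))^{1/\delta}$ and then sandwiches $t_{k}$ in $((1-\epsilon)s_{k},(1+\epsilon)s_{k})$ by freezing the slowly varying factor $\log^{-2}x$ on suitably chosen subintervals (e.g.\ restricting to $[(1-\epsilon)s_k,cs_k]$ for the lower bound, splitting at $s_k'=(c'\log k)^{-1/\delta}$ for the upper), and it dismisses Part~2 with ``can be derived similarly.'' You instead prove a single endpoint (Watson/Laplace-type) asymptotic $\int_M^A e^{v\sigma}v^{-2}\,dv=\sigma^{-1}A^{-2}e^{A\sigma}(1+O(A^{-1}))$ and its decreasing-exponential counterpart, invert the defining identity $\mu_U([t_k,\psi(M)])=1/k$ to get $\frac{\gamma}{\delta}t_k^{-\delta}\log^{-2}t_k=(1+o_k(1))\log k$, and bootstrap once to extract $t_k$ explicitly; the same asymptotic then yields all three limits in Part~2 after the observation that $\log^2(ct_k)=(1+o(1))\log^2 t_k$. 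Your endpoint checks are the right ones: the boundary term at $v=M$ is $O(1)$ and hence negligible against $e^{A\delta}/A^2$, and $-\log c_k\sim\delta^{-1}\log k\gg -\log t_k\sim\delta^{-1}\log\log k$ (indeed for $p\in\{1,2\}$ you could even extend the integral to $x=0$ since it converges there). What your approach buys is a uniform treatment of all four integrals, including a genuine proof of Part~2, which the paper omits. The only point to spell out in a final write-up is that ``a single IBP'' leaves the remainder $\frac{2}{\sigma}\int e^{v\sigma}v^{-3}\,dv$, and one more line (a second IBP, or splitting the range at $A/2$) is needed to see that this is $O(A^{-1})$ times the main term; this is standard and not a gap.
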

\begin{proof}[Proof of Lemma~\ref{lem:col_sumzm_stablelaw}]
Let $U_{i},i=1,2,\dots,k$ be i.i.d.\ copies of $U$ and let $S_{k}:=\sum_{i=1}^{k}U_{i}$.
Given $\omega\in(0,1)$, let $m_{\le\omega}:=\E(U1\{U\le\omega t_{k}\})$,
$S_{k}^{\omega}:=\sum_{i=1}^{k}U_{i}\Ind{U_{i}\ge\omega t_{k}}$
and $T_{k}^{\omega}:=\sum_{i=1}^{k}U_{i} \Ind{U_{i}<\omega t_{k}}-km_{\le\omega}$.
We have
\[
S_{k}=S_{k}^{\omega}+T_{k}^{\omega}+k\cdot m_{\le\omega}.
\]
For the first term $S_{k}^{\omega}$, let $F_{k}^{\omega}$ and
$\psi_{k}^{\omega}$ be the c.d.f.\ and characteristic function
of $t_{k}^{-1}U_{i}$ conditioned on $\{t_{k}^{-1}U_{i}\ge\omega\}$.
By Fact~\ref{fact:col_sumzm_stableintegr}(2), for any $\omega>0$ and
any $x>\omega$,
\[
1-F_{k}^{\omega}(x)
=(1+o_{k}(1))(x/\omega)^{-\delta}
\to(\omega/x)^{\delta}
,\quad\textup{as }k\to\infty.
\]
Hence for any $t\in\mathbb{R}$, $\psi_{k}^{\omega}(t)\to\psi^{\omega}(t):=\int_{\omega}^{\infty}e^{itx}\cdot\delta\omega^{\delta}x^{-(\delta+1)}dx$.
Meanwhile by Fact~\ref{fact:col_sumzm_stableintegr}(2), the distribution
of the number of $i\in[k]$ such that $U_{i}\ge \omega t_{k}$ converges
weakly to $\pois(\omega^{-\delta})$, hence
\[
\lim_{k\to\infty}\E\exp(itS_{k}^{\omega}/t_{k})=\exp[-\omega^{-\delta}(1-\psi^{\omega}(t))]=\exp\left(\int_{\omega}^{\infty}(e^{itx}-1)\delta x^{-(\delta+1)}dx\right).
\]
For the second term $T_{k}^{\omega}$, observe that $\E T_k^\omega = 0$. By Fact~\ref{fact:col_sumzm_stableintegr},
\begin{align*}
t_{k}^{-2}\E(T_{k}^{\omega})^{2} & =t_{k}^{-2}\Var(T_{k}^{\omega})\le kt_{k}^{-2}\E U_{i}^{2}1\{U_{i}<\omega t_{k}\}\le(1+o_{k}(1))\frac{\delta}{2-\delta}\omega^{2-\delta}.
\end{align*}
For each $t\in\mathbb{R}$, $\exp(itx)$ is a Lipschitz function
with Lipschitz constant $t$. By Jensen's inequality,
\[
|\E\exp(it(t_{k}^{-1}S_{k}))-\E\exp(it(t_{k}^{-1}S_{k}^{\omega}))|\le t\left(\E|t_{k}^{-1}T_{k}^{\omega}|+t_{k}^{-1}km_{\le\omega}\right)\le O(\omega^{1-\delta/2}).
\]
Let $\omega\to0$. By dominated convergence theorem, we have
\[
\lim_{k\to\infty}\E(\exp(itS_{k}/t_{k}))=\exp\left(\int_{0}^{\infty}(e^{itx}-1)\delta x^{-(\delta+1)}dx\right).
\]
The rest of the proof follows from complex analysis: Let $\Gamma$
denote the gamma function (not to be confused with the recursion $\Gamma_{s}$
defined before). For $t>0$, (the case of $t<0$ is parallel)
\begin{align*}
\int_{0}^{\infty}(e^{itx}-1)\delta x^{-(\delta+1)}dx & =t^{\delta}\int_{0}^{\infty}(e^{ix}-1)\delta x^{-(1+\delta)}dx\\
 & =it^{\delta}\int_{0}^{\infty}x^{-\delta}e^{ix}dx=i^{\delta}t^{\delta}\int_{0}^{\infty}(ix)^{-\delta}e^{ix}d(ix)\\
 & =\Gamma(1-\delta)i^{\delta}t^{\delta}=\cos(\pi\delta/2)\Gamma(1-\delta)t^{\delta}(1+i\tan(\pi\delta/2)),
\end{align*}
where the second equality follows by integration by part and the last
equality follows by doing contour integral on region $\{re^{i\theta}:\omega\le r\le R,\theta\in[0,\frac{\pi}{2}]\}$
and letting $\omega\to0,R\to\infty$.
\end{proof}
Let $\widetilde{U}$ denote the limiting stable law specified in Lemma~\ref{lem:col_sumzm_stablelaw}.  When $\delta=\frac{1}{2}$, $\widetilde{U}$ follows the Levy distribution with parameter $\frac{\pi}{2}$. Since this is the only value of $\delta$ for which we have a closed formula for $f_{\widetilde{U}}$, here and henceforth we will take $\delta=1/2$.  The result, however, should hold for all $\delta\le\frac{1}{2}$ as long as \eqref{eq:col_sumzm_generaldelta} holds.
Plugging in the formula of Levy distribution and comparing with Fact~\ref{fact:col_sumzm_stableintegr}, we have
\begin{align}
P(\widetilde{U}\le c)
&=\frac{2}{\sqrt{\pi}}\int_{\frac{1}{2}\sqrt{\pi/c}}^{\infty}e^{-t^{2}}dt\le\frac{2}{\sqrt{\pi}}\frac{1}{2}\sqrt{\frac{\pi}{c}}e^{-\pi/2c}\le c^{-1/2}e^{-\pi/2c}\nonumber \\
 & \stackrel{}{<}c^{-1/2}=(1+o_{k}(1))kP(U<ct_{k})
.\label{eq:col_sumzm_generaldelta}
\end{align}
Thus we can upper-bound $\mu_{k\otimes U}(dx)$ by $(1+o_{k}(1))k\cdot\mu_{U}(dx)$ for small $x\approx O(t_{k})$. In the next lemma, we bound larger values of $k\otimes U$ using the intuition of $k\otimes U\approx\max_{i=1,\dots,k}U_{i}$.
\begin{lem} \label{lem:col_sumzm_ktimesU}
Fix $\delta=1/2$. For any $M\ge\frac{2}{\delta}$,
$\gamma,\epsilon\in(0,1)$, and $k\ge k_0$,
\begin{equation}
k\otimes\mu_{U}\prec(1+\epsilon)k\cdot\mu_{U}1_{\ge c_{k}}+\frac{\epsilon}{\log k}\delta_{\infty}.\label{eq:col_sumzm_ktimesU}
\end{equation}
\end{lem}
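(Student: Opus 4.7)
The claim is equivalent to the one-sided tail bound
\[
P(S_k > x) \le (1+\epsilon)\, k\, \mu_U\bigl((x,\psi(M)]\bigr) + \epsilon/\log k
\quad \text{for all } x\ge c_k,
\]
where $S_k := U_1+\cdots+U_k$ and the case $x<c_k$ is trivial since the RHS then has full mass. I would split the analysis into three regimes according to where $x$ sits relative to the stable scale $t_k$ of Lemma~\ref{lem:col_sumzm_stablelaw} and the support endpoint $\psi(M)$.

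\emph{Stable-law bulk} ($c_k \le x \le A_0 t_k$ for $A_0$ a large constant): Lemma~\ref{lem:col_sumzm_stablelaw} yields $S_k/t_k\Rightarrow \widetilde U$, and the convergence is uniform on the compact range $[c_k/t_k,A_0]$ by Polya's theorem since the Levy limit is continuous. Combined with $kP(U>ct_k)\to c^{-\delta}$ from Fact~\ref{fact:col_sumzm_stableintegr}(2) and the estimate \eqref{eq:col_sumzm_generaldelta}, we obtain $P(\widetilde U>c)=(1+o_k(1))\,kP(U>ct_k)$ uniformly in $c$ on this range, giving the claim with the $(1+\epsilon)$ factor for $k\ge k_0$.

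\emph{Heavy-tail regime} ($A_0 t_k \le x\le\psi(M)$): I would use the standard one-big-jump decomposition. Fix small $\eta>0$ (to be chosen depending on $\epsilon$) and bound $P(S_k>x)$ by the probabilities of (a) $\#\{i:U_i>\eta x\}\ge 2$, (b) exactly one $U_i$ exceeds $\eta x$ and $S_k>x$, and (c) all $U_i\le\eta x$ but $S_k>x$. A union bound gives (a) $\le \binom{k}{2}P(U>\eta x)^2=O(\eta^{-2\delta}/k)$, absorbed by the $\epsilon/\log k$ term. Markov gives (c) $\le k\E[U\mathbf{1}_{U\le\eta x}]/x=O(\eta^{1-\delta})$, absorbed into the multiplicative $(1+\epsilon)$. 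For (b), conditioning on the heavy index and applying Chebyshev to the remaining sum (which has mean $O(\eta^{1-\delta}x)\ll x$), the heavy $U_i$ must lie in $(x(1-O(\eta^{(1-\delta)/2})),x]$, so (b) $\le kP\bigl(U>x(1-O(\eta^{(1-\delta)/2}))\bigr)=(1+o_\eta(1))\,kP(U>x)$ by the slow variation of $1/(\log x)^2$ in $\mu_U$.

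\emph{Beyond the support} ($x>\psi(M)$): Here $\mu_U((x,\psi(M)])=0$, so we need $P(S_k>x)\le\epsilon/\log k$. Direct integration of $\mu_U$ gives $\E U=O(1/(k\log k))$ and $\E S_k=O(1/\log k)$, so Markov yields the bound for $x$ of constant order $\gtrsim 1/\epsilon$. For the remaining range $\psi(M)\le x\lesssim 1/\epsilon$, the boundedness $U_i\le\psi(M)$ permits a Bernstein-type concentration delivering $\epsilon/\log k$ for $k\ge k_0$. The most delicate step is in the heavy-tail regime: the $(1+\epsilon)$ multiplicative constant forces the slow-variation correction $kP(U>x(1-O(\eta^{(1-\delta)/2})))/kP(U>x)$ to be driven close to $1$ uniformly in $x$, which requires $\eta$ to be chosen sufficiently small after $\epsilon, M, \gamma$ in the parameter ordering of Section~\ref{sec:rec23}.
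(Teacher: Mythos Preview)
Your three-regime split (stable-law bulk, one-big-jump heavy tail, boundary) and the use of Lemma~\ref{lem:col_sumzm_stablelaw} for the bulk match the paper's strategy. However, two of your quantitative claims are incorrect and leave genuine gaps near the endpoints of the heavy-tail range.

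First, your bound on (a) is miscalculated. At the bottom of the heavy-tail range, $x=A_0t_k$, Fact~\ref{fact:col_sumzm_stableintegr}(2) gives $P(U>\eta x)\sim(\eta A_0)^{-\delta}/k$, so
\[
\binom{k}{2}P(U>\eta x)^2\;\sim\;\tfrac12(\eta A_0)^{-2\delta},
\]
a \emph{constant}, not $O(\eta^{-2\delta}/k)$. It therefore cannot be absorbed into the additive $\epsilon/\log k$ term as you claim. It \emph{can} be absorbed into the multiplicative factor, since $kP(U>x)\sim A_0^{-\delta}$ and the ratio is $\tfrac12\eta^{-2\delta}A_0^{-\delta}\le\epsilon$ once $A_0$ is chosen large after $\eta$; but that is not the mechanism you invoked.

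Second, and more seriously, your treatment of the upper boundary fails. The ``slow variation'' step in (b), $kP\bigl(U>x(1-r)\bigr)=(1+o_\eta(1))\,kP(U>x)$, breaks down as $x\uparrow\psi(M)$ because $U$ has a hard upper cutoff: for $x=(1-\epsilon')\psi(M)$ with $\epsilon'\ll r$ the ratio is $\approx(\epsilon'+r)/\epsilon'\to\infty$. And your Bernstein argument for $\psi(M)<x\lesssim 1/\epsilon$ cannot deliver $\epsilon/\log k$: with $|U_i|\le\psi(M)$ and $k\Var U=O(1/\log k)$, Bernstein's exponent at $t\approx\psi(M)$ is $\approx-3t/(2\psi(M))$, a fixed constant, so the bound is $\Theta(1)$, not $o(1/\log k)$. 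The paper handles both issues by one device you omit: it stops the heavy-tail case at $(1-\sigma)\psi(M)$ for a small $\sigma=\sigma(\epsilon,M,\gamma)$ chosen so that $kP\bigl(U\ge(1-\sigma)\psi(M)\bigr)\le\epsilon/(2\log k)$, and then disposes of all $x\ge(1-\sigma)\psi(M)$ by monotonicity,
\[
P(S_k\ge x)\le P\bigl(S_k\ge(1-\sigma)\psi(M)\bigr)\le(1+\epsilon)\,kP\bigl(U\ge(1-\sigma)\psi(M)\bigr)\le\epsilon/\log k,
\]
the middle inequality coming from the heavy-tail case already established. With that single modification (and the corrected bookkeeping for (a)) your outline goes through; the paper's one-big-jump is phrased via $U_{(1)}=\max_i U_i$ rather than your threshold decomposition, but that difference is cosmetic.
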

\begin{proof}
Let $U_{1},\dots,U_{k}$ be i.i.d.\ copies of $U$ and define $U_{(1)}:=\max_{i=1,\dots,k}U_{i}$, $U_{R}:=\sum_{i=1}^{k}U_{i}-U_{(1)}$. Let $c=c(\delta,M,\gamma,\epsilon)>0$ be some small constant to be determined. We write
\begin{equation}
P\Big(\sum_{i=1}^{k}U_{i}\ge z\Big)
\le P(U_{(1)}\ge(1-c)z)
+ \int_{0}^{(1-c)z}f_{U_{(1)}}(x)P(U_{R}\ge z-x\mid U_{(1)}=x)dx
,\label{eq:col_kU_sumU}
\end{equation}
where $f_{U_{(1)}}(z)=kf_{U}(z)(F_{U}(z))^{k-1}\le kf_{U}(z)$. Fix $\sigma=\sigma(\delta,M,\gamma,\epsilon)\in(0,\frac{1}{2})$
such that
\[
P(U\ge(1-\sigma)\psi(M))\le\frac{1}{\log k}\int_{(1-\sigma)\psi(M)}^{\psi(M)}\frac{\gamma}{\log^{2}x}x^{-(1+\delta)}dx\le\frac{\epsilon}{2\log k}.
\]
 We will split the proof into three cases: $x\in[c_{k},Nt_{k}]$,
 $x\in[Nt_{k},(1-\sigma)\psi(M)]$ and $x\ge(1-\sigma)\psi(M)$ where $N=N(\delta,M,\gamma,\epsilon,\sigma,c)$ is a large constant to be determined.
\begin{enumerate}
\item $x\in[Nt_{k},(1-\sigma)\psi(M)]$:
To bound the first term of \eqref{eq:col_kU_sumU}, we observe that $f_{U}$ is a decreasing function and for $z\le (1-\sigma)\psi(M)$, $(1+\sigma)z \le \psi(M)\in\supp U$. Therefore
\[
    \frac{P(U_{(1)}\in [(1-c)z,z])}
    {P(U_{(1)}\in [z,(1+\sigma)z])}
    \le \frac{czf_U((1-c)z)F^{k-1}(z)}
        {\sigma zf_U((1+\sigma)z)F^{k-1}(z)}
    \le \frac{c}{\sigma}
    \frac{f_U(z/2)} {f_U((1+\sigma)z)}
    \le C_{\sigma,M}\cdot c,
\]
for all $c\le 1/2$ and $z\le (1-\sigma)\psi(M)$. It follows that
\begin{equation} \label{eq:col_kU_sumU_part1}
P(U_{(1)}\ge (1-c)z)
\le (1+C_{\sigma,M}\cdot c) P(U_{(1)}\ge z)
\le (1+C_{\sigma,M}\cdot c) kP(U\ge z)
.
\end{equation}
For the second term of \eqref{eq:col_kU_sumU}, a similar calculation of Fact~\ref{fact:col_sumzm_stableintegr} gives that for any $x\le\psi(M)$,
\begin{align*}
k\log k\E(U\mid U\le x)
&=\frac{k\log k}{F_{U}(x)}
\int_{0}^{x}zf_{U}(z)dz
\le\frac{\gamma}{1-\delta}\frac{1}{F_{U}(x)}\frac{1}{\log^{2}x}x^{1-\delta},\\
k\log k\E(U^{2}\mid U\le x)
& =\frac{k\log k}{F_{U}(x)}\int_{0}^{x}z^{2}f_{U}(z)dz
\le \frac{\gamma}{2-\delta} \frac{1}{F_{U}(x)}
    \frac{1}{\log^{2}x} x^{2-\delta}.
\end{align*}
Recall the expression of $t_k$ from Fact~\ref{fact:col_sumzm_stableintegr}. For any $c>0$ we choose
$N=N(M,\gamma,\epsilon,c)$ such that for $k\ge k_0$ and $x\ge Nt_{k}$,
\begin{equation}
k\E(U\mid U\le x)
\le
\frac{1+o_k(1)}{\log k}
\frac{\gamma}{1-\delta}
\frac{x(Nt_{k})^{-\delta}}{\log^2 t_{k}}
=(1+o_{k}(1))N^{-\delta}
\frac{\delta}{1-\delta}
x\le\frac{1}{2}cx.\label{eq:col_kU_Nconstraint1}
\end{equation}
Given $U_{(1)} =x$, $U_R$ is distributed as the sum of $(k-1)$ i.i.d.\ copies of $U$ conditioned on $U\le x$. By Chebyshev inequality, for any $z\in[2Nt_{k},\psi(M)]$ and $x\le (1-c)z$,
\[
    P(U_{R}\ge z-x\mid U_{(1)}=x)
    \le
    \frac{k\cdot\E(U^2\mid U\le x)}{(z-x-k\E(U\mid U\le x))^{2}}
    \le
    \frac{4}{c^2z^2}
    \frac{1}{\log k}
    \frac{\gamma}{2-\delta}\frac{1}{F_{U}(x)}
    \frac{x^{2-\delta}}{\log^{2}x}
    ,
\]
where in the second step, we use the fact that $\E(U\mid U\le x)$ is monotone decreasing in $x$. Plugging the estimation into the RHS of \eqref{eq:col_kU_sumU}, for $z\le \psi(M)$, we have that
\begin{align}
\int_{0}^{(1-c)z}kf_{U}(x)F_U(x)^{k-1}
&P(U_{R}\ge z-x\mid U_{(1)}=x)dx
\nonumber\\& \le\int_{c_k}^{(1-c)z}
    \frac{1}{\log k} \frac{\gamma}{\log^2 x} x^{-1-\delta}
    \cdot
    \frac{4}{c^2z^2}
    \frac{1}{\log k}
    \frac{\gamma}{2-\delta}
    \frac{x^{2-\delta}}{\log^{2}x}
    dx
\nonumber \\ &\le
\frac{C_{c,\gamma}}{\log^2 k}
\frac{1}{z^2}
\int_{c_k}^{(1-c)z}
    \frac{1}{\log^{4}x}x^{1-2\delta}
dx
\le
\frac{C_{c,\gamma,M}}{\log^{2}k\cdot z^{2\delta}\log^{4}z}
.\label{eq:col_kU_chebyshev}
\end{align}
Meanwhile, for $z\le (1-\sigma)\psi(M)$,
\begin{equation}\label{eq:col_kU_part2}
    kP(U\ge z) \ge k\cdot \sigma z f_U((1+\sigma)z)
    =
    \frac{C_{\gamma,\sigma,M}}{\log k\cdot z^\delta\log^2 z}.
\end{equation}
Comparing \eqref{eq:col_kU_chebyshev} and \eqref{eq:col_kU_part2} and using Fact~\ref{fact:col_sumzm_stableintegr}(1), we have for all $z\ge Nt_k$ that
\begin{align}
\int_{0}^{(1-c)z}
f_{U_{(1)}}(x)
P\bigg(\sum_{i=1}^{n}U_{i}\ge z\mid U_{(1)}=x\bigg)dx \le
C_{c,\gamma,\sigma,M}N^{-\delta} kP(U\ge z)
.\label{eq:col_kU_sumU_part2}
\end{align}
Combine (\ref{eq:col_kU_sumU_part1}) and (\ref{eq:col_kU_sumU_part2}). For each $\epsilon>0$, we can first pick $c \le \epsilon/2C_{\sigma,M}$ and then choose $N=N(M,\gamma,\epsilon,\sigma,c)$ such that (\ref{eq:col_kU_Nconstraint1})
is true and for all $k\ge k_0$, $z\in[Nt_{k},(1-\sigma)\psi(M)]$,
\begin{align}
P\bigg(\sum_{i=1}^{k}U_{i}\ge z\bigg)
&\le kP(U\ge z)
\bigg(
    1 + C_{\sigma,M}\cdot c
    + \frac{C_{c,\gamma,\sigma,M}}{N^{\delta}}
\bigg)
\le (1+\epsilon)kP(U\ge z)
.\label{eq:col_kU_case1}
\end{align}
\item $z\in[c_k,Nt_{k}]$:
Lemma~\ref{lem:col_sumzm_stablelaw} implies that for $z'\in(1,N]$, $P(\sum_{i=0}^k U_i \ge z't_k)$ converges uniformly to $1 \wedge P(\widetilde{U}>z')$ as $k\to\infty$ and $\widetilde{U}$ follows the Levy distribution with parameter $\frac{\pi}{2}$.
Comparing the $c_{k}$ in the RHS of \eqref{eq:col_sumzm_ktimesU} to the definition of $t_k$ yields that $c_k\ge t_k$ for any $\epsilon>0$.
Therefore  for $k>k_0$ and $z\in[c_{k},Nt_{k}]$ with $z'=z/t_{k}\in(1,N]$,
\begin{equation}
P\bigg(\sum_{i=1}^{k}U_{i}\ge z\bigg)
\le(1+\epsilon/2)P\big(\tilde{U}>z'\big)
\le(1+\epsilon)kP(U\ge z't_{k})
,\label{eq:col_kU_case2}
\end{equation}
where the last step uses \eqref{eq:col_sumzm_generaldelta}.
\item Finally using \eqref{eq:col_kU_case1} and recall the definition of $\sigma$, we have for all $z\ge(1-\sigma)\psi(M)$ that
\begin{equation}\label{eq:col_kU_case3}
P\bigg(\sum_{i=1}^{k}U_{i}\ge z\bigg)
\le P\bigg(\sum_{i=1}^{k}U_{i}\ge(1-\sigma)\psi(M)\bigg)
\le (1+\epsilon)k P(U\ge (1-\sigma)\psi(M))
\le \frac{\epsilon}{\log k}.
\end{equation}
\end{enumerate}
Combining \eqref{eq:col_kU_case1}, \eqref{eq:col_kU_case2} and \eqref{eq:col_kU_case3} completes the proof.
\end{proof}

\subsection{Distribution of \texorpdfstring{$\log(\sum_{m=2}^{k}\exp(Z_{1}-Z_{m}))$}{log sum exp(Z1-Zm)}}
In this section we bound the distribution of $W_0:=-\log(\sum_{m=2}^{k}e^{Z_{1}-Z_{m}})$. First we rewrite (\ref{eq:col_zm_z1}) as
\[
Z_{1}\prec
\left(
    \pois\left(\frac{1}{2}\kappa\right)\otimes\delta_{\alpha}
\right)\oplus\left(
    \pois(\gamma p_{r}^{\neq})
    \otimes\frac{1}{\gamma p_{r}^{\neq}}\nu_{r}^{\neq}
\right)
=:R_{0}+R_{r}=:\widetilde{Z}_{1}
    ,
\]
and let $\tilde{\nu}_{-Z_{1}}$ be the distribution of $-\widetilde{Z}_{1}$. Then we define $V:=-\log(\sum_{m=2}^{k}e^{-Z_{m}})$. The conclusion of Lemma~\ref{lem:col_sumzm} can be rewritten as
\begin{align}
\nu_{V}
&\succ
\frac{e^{\gamma+1-\beta}}{\log k}
\left[\psi^{-1}\circ (\psi+\sigma)\circ\nu_{S_{1}}+(1+\epsilon)C_{Z}\nu_{r}\right]1_{\le a_{k}}
+\frac{\epsilon}{\log k}\delta_{-\infty}
\nonumber\\&
=:\tilde{\nu}_{V}^{1}+\tilde{\nu}_{V}^{r}+\tilde{\nu}_{V}^{\infty}
=:\tilde{\nu}_{V}
.\label{eq:col_W_nuV}
\end{align}
Let $\widetilde{V}$ be sampled from $\tilde{\nu}_{V}$.
  Note that $Z_1$ is independent of $\sum_{m=2}^k Z_m$. We finally define
\begin{equation}
\widetilde{W}_{0}
:= \widetilde{V}-\widetilde{Z}_{1}
\prec V-Z_{1} = W_0
,\label{eq:col_W_tildeW}
\end{equation}
\begin{lem}
\label{lem:col_W_Wbound} Assume that $(\delta,\kappa,\alpha_{0},M,\sigma,\gamma,\epsilon)$ satisfies the conditions of Lemma~\ref{lem:col_zm_S0S1bound} and \ref{lem:col_sumzm}.
\begin{enumerate}
\item If $\delta\le\frac{1}{2}$, then there exists constant $C_{\delta,M}>0$ such that for each $y\ge M$,
\[
(\nu_{r}*\tilde{\nu}_{-Z_{1}})(dy)\le(1+C_{\delta,M}\gamma)\exp(\kappa(e^{\alpha\delta}-1)/2)\nu_{r}(dy).
\]

\item There exists constant $C_{\delta,\alpha,M}^{\star}>0$ such that $(\nu_{r}*\tilde{\nu}_{-Z_{1}})((-\infty,M])\le\gamma\cdot C_{\delta,\alpha,M}^{\star}$.
\item For any fixed $\kappa,\alpha_{0}$ and $y_{1},y_{2}\ge M$,
\[
\liminf_{\sigma,\gamma\to0}
(\tilde{\nu}_{V}^{1}*\tilde{\nu}_{-Z_{1}}) ([y_{1},y_{2}])
\ge\frac{e^{-\kappa/2}}{2\log k}
P\big(\pois(\kappa)\cdot\alpha\in(y_{1},y_{2})\big)
.
\]

\end{enumerate}
\end{lem}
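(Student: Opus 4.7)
My plan is to separate Parts (1) and (2), which are moment-generating-function estimates on $\tilde Z_{1}=R_{0}+R_{r}$, from Part (3), which is a weak-convergence argument as $(\sigma,\gamma)\to 0$.

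For Parts (1) and (2) I would start by writing out the convolution: since $\tilde Z_{1}\ge 0$ and $\nu_{r}$ is supported on $[M,\infty)$,
\begin{align*}
(\nu_{r}*\tilde{\nu}_{-Z_{1}})(dy)
= \Big(\int \frac{\gamma\,e^{\delta(y+z)}}{(y+z)^{2}}\Ind{y+z>M}\,d\mu_{\tilde Z_{1}}(z)\Big)\,dy.
\end{align*}
For $y\ge M$ and $z\ge 0$ we have $(y+z)^{2}\ge y^{2}$, which at once yields Part (1) with prefactor $\E e^{\delta\tilde Z_{1}}$. For Part (2), Fubini rewrites $(\nu_{r}*\tilde{\nu}_{-Z_{1}})((-\infty,M])=\int\nu_{r}([M,M+z])\,d\mu_{\tilde Z_{1}}(z)$, and the crude bound $\nu_{r}([M,M+z])\le \gamma e^{\delta(M+z)}/(\delta M^{2})$ again reduces matters to $\E e^{\delta\tilde Z_{1}}$. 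I would then factor $\E e^{\delta\tilde Z_{1}}=\E e^{\delta R_{0}}\cdot\E e^{\delta R_{r}}$: a direct Poisson calculation gives $\E e^{\delta R_{0}}=\exp(\kappa(e^{\alpha\delta}-1)/2)$, yielding precisely the exponential prefactor in Part (1), while the compound-Poisson formula gives $\E e^{\delta R_{r}}=\exp(\int(e^{\delta y}-1)\,d\nu_{r}^{\neq}(y))$. Using $1-\phi^{-1}(y)\le e^{-y}$ bounds $\int e^{\delta y}\,d\nu_{r}^{\neq}(y)$ by $\gamma\int_{M}^{\infty}e^{(2\delta-1)y}/y^{2}\,dy$, finite precisely because $\delta\le\tfrac{1}{2}$ (the hypothesis of Part (1)); hence $\E e^{\delta R_{r}}=1+O_{\delta,M}(\gamma)$, completing Parts (1) and (2).

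For Part (3), each factor of the convolution has a clean $(\sigma,\gamma)\to 0$ limit: the prefactor $e^{\gamma+1-\beta}/\log k\to e^{1-\beta}/\log k\ge 1/\log k$ since $\beta\le 1$; the map $y\mapsto -\log(e^{-y}+\sigma)$ converges to the identity on $\mathbb{R}_{\ge 0}$, so $\psi^{-1}\circ(\psi+\sigma)\circ\nu_{S_{1}}$ converges weakly to $\nu_{S_{1}}$, the law of $\alpha N$ with $N\sim\pois(\kappa)$ (the truncation $1_{\le a_{k}}$ is harmless since $y_{2}<a_{k}$ eventually); and $R_{r}\to 0$ in probability as $\gamma\to 0$, so $\tilde{\nu}_{-Z_{1}}$ converges weakly to the law of $-R_{0}=-\alpha N'$ with $N'\sim\pois(\kappa/2)$. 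Combining these,
\begin{align*}
\liminf_{\sigma,\gamma\to 0}(\tilde{\nu}_{V}^{1}*\tilde{\nu}_{-Z_{1}})([y_{1},y_{2}])
\ge \frac{1}{\log k}\,P(\alpha N-\alpha N'\in[y_{1},y_{2}]).
\end{align*}
Conditioning on $\{N'=0\}$, which has probability $e^{-\kappa/2}$, yields the lower bound $e^{-\kappa/2}\,P(\alpha N\in(y_{1},y_{2}))$ on the right, and the $\tfrac{1}{2}$ in the statement absorbs any loss from atoms landing on the boundary.

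The main obstacle I anticipate is making the limiting argument in Part (3) genuinely rigorous: the limiting laws are purely atomic on $\alpha\mathbb{Z}_{\ge 0}$, so weak convergence of the mass on $[y_{1},y_{2}]$ needs care around the endpoints --- exactly why the statement uses the half-open interval $(y_{1},y_{2})$ on the right. Parts (1) and (2) should then be essentially routine once the integrability threshold $\delta\le\tfrac{1}{2}$ is identified as the decisive ingredient.
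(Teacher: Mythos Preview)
Your proposal is correct and follows essentially the same route as the paper: Parts (1) and (2) are reduced to bounding $\E e^{\delta\widetilde Z_{1}}=\E e^{\delta R_{0}}\cdot\E e^{\delta R_{r}}$ via the same convolution/Fubini manipulations, with the Campbell/compound-Poisson identity and the integrability condition $2\delta-1\le 0$ controlling the $R_{r}$ factor; Part (3) is obtained by conditioning on $\widetilde Z_{1}=0$ (equivalently $N'=0$ after $\gamma\to 0$) and letting $\sigma\to 0$ collapse $-\log(e^{-y}+\sigma)$ to $y$, using the discreteness of $\alpha\mathbb{Z}_{\ge 0}$ to handle the endpoints. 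One small remark: the factor $\tfrac{1}{2}$ in the statement is pure slack---the open interval $(y_{1},y_{2})$ on the right already absorbs any boundary atom, and the paper's own computation in fact yields the bound without the $\tfrac{1}{2}$.
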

\begin{proof}
\textbf{Part 1: }By definition, for any $y\ge M$
\begin{equation}
\nu_{r}*\tilde{\nu}_{-Z_{1}}(dy)
= \int_{-\infty}^{0}
\frac{\gamma e^{\delta(y-z)}}{(y-z)^{2}}
\tilde{\nu}_{-Z_{1}}(dz)
\le \frac{\gamma e^{\delta y}}{y^{2}}dy
\cdot
\int_{-\infty}^{0}e^{-\delta z}\tilde{\nu}_{-Z_{1}}(dz)
= \nu_{r}(dy) \E e^{\delta\widetilde{Z}_{1}}
.
\label{eq:col_W_part1}
\end{equation}
Hence it is enough to bound $\E\exp(\delta\widetilde{Z}_{1})=\E\exp(\delta R_{0})\E\exp(\delta R_{r}).$
For the first term,
\begin{equation}
\E\exp(\delta R_{0})=\E\exp\big(\delta\alpha\cdot\pois(\kappa/2)\big)=\exp\big(\kappa(e^{\alpha\delta}-1)/2\big).\label{eq:col_W_T0}
\end{equation}
For the second term, $R_{r}$ has the same distribution as the sum of points from the Poisson point process with intensity $\nu_{r}^{\neq}(dy)$. Recall that
\[
\nu_{r}^{\neq}(dy)
=\left(e^{y}+(k-1)^{-1}\right)^{-1} \nu_{r}(dy)
\le \frac{\gamma}{y^{2}}e^{(\delta-1)y}dy
\]
and $p_{r}^{\neq}=\frac{1}{\gamma}\nu_{r}^{\neq}([M,\infty))$ depends
only on $\delta,M$. By Campbell's Theorem, for any $\delta\le\frac{1}{2}$
and $\gamma\le1$,
\begin{equation}
\E\exp(\delta R_{r})
= \exp\left( \int_{M}^{a_{k}}
    (e^{\delta y}-1)\nu_{r}^{\neq}(dz)
\right)
\le \exp\left(
    \gamma\int_{M}^{\infty}y^{-2}e^{(2\delta-1)y}dy
\right)
\le 1+\gamma C_{\delta,M},\label{eq:col_W_Tr}
\end{equation}
where in the last step we use the inequality $e^{x}\le1+xe^{x},\forall x\ge0$.
Plugging (\ref{eq:col_W_T0}) and (\ref{eq:col_W_Tr}) back into (\ref{eq:col_W_part1})
yields the desired result.

\textbf{Part 2: }Expanding the convolution of $\nu_{r}*\tilde{\nu}_{-Z_{1}}$ yields that
\begin{align*}
\nu_{r}*\tilde{\nu}_{-Z_{1}}((-\infty,M])
&\le \int_{0}^{\infty}
\int_{M}^{z+M}\frac{\gamma}{y^{2}}e^{\delta y}
\cdot \tilde{\nu}_{Z_{1}}(dz) dy
\le \frac{\gamma e^{\delta M}}{\delta M^{2}}
\int_{0}^{\infty}e^{\delta z}\tilde{\nu}_{Z_{1}}(dz)
= \frac{\gamma e^{\delta M}}{\delta M^{2}}
\E e^{\delta\widetilde{Z}_{1}}
.
\end{align*}
Applying (\ref{eq:col_W_T0}) and (\ref{eq:col_W_Tr}) to $\E e^{\delta \widetilde{Z}_1}$ gives one possible $C^\star_{\delta,\alpha,M} = \exp(e^{\alpha\delta}-1) (1+C_{\delta,M}) e^{\delta M}/(\delta M^2)$.

\textbf{Part 3: }Noting that $\psi^{-1}(\psi(y)+\sigma)=-\log(e^{-y}-\sigma)$,
we have that
\begin{align*}
\tilde{\nu}_{V}^{1}*\tilde{\nu}_{-Z_{1}}([y_{1},y_{2}])
&\ge\frac{e^{\gamma+1-\beta}}{\log k}
P(\widetilde{Z}_{1}=0)
\cdot P\big(\pois(\kappa)\cdot\alpha
    \in[\log(e^{-y_{1}}-\sigma),\log(e^{-y_{2}}-\sigma)]
\big)
\\& \ge\frac{1}{\log k}
e^{-\frac{1}{2}\kappa-\gamma p_{r}^{\neq}}
P\left( \pois(\kappa)\cdot\alpha
\in(\log(-e^{-y_{1}}-\sigma),-\log(e^{-y_{2}}-\sigma))
\right).
\end{align*}
$\pois(\kappa)\cdot\alpha$ takes values from the discrete set $\alpha\mathbb{Z}_+$. For any fixed $y_1,y_2$, there exists $\sigma=\sigma(\alpha,y_1,y_2)$ such that there is no points of $\alpha\mathbb{Z}_+$ between $-\log(e^{-y_i}-\sigma)$ and $y_i$, $i=1,2$. Hence in the last line we can substitute the probability by $P(\pois(\kappa)\cdot\alpha\in(y_{1},y_{2}))$.
Letting $\gamma\to0$ finishes the proof.
\end{proof}

\subsection{Final step}

Finally we are ready to prove Theorem~\ref{thm:col_rec_stocdom}.
\begin{proof}[Proof of Theorem~\ref{thm:col_rec_stocdom}]
By Proposition~\ref{prop:col_rec_rewrite}, it suffices to show that under certain choice of parameters $(\delta,\kappa,\alpha_{0},M,\sigma,\gamma,\epsilon)$,
the random variable $W$ defined in \eqref{eq:col_rec_W} stochastically dominates $\nu_{k}$ by $c/\log k$ for some fixed
$c>0$. For any $\alpha_{0}>0$ and $\alpha=\phi(\frac{1}{2}-\alpha_{0})$,
we first choose $\sigma<\sigma_{1}(\alpha_{0})$ such that $\log(1+e^{-\sigma})>\frac{1}{2}(1-\alpha_{0})$.
Thus for $k\ge k_0$ we can write
\begin{equation}
W\succ\log\left(\frac{k-2}{k-1}+\exp(\widetilde{W}_{0})\right)\vee0\ge\begin{cases}
\widetilde{W}_{0} & \widetilde{W}_{0}\ge M\\
\alpha & M>\widetilde{W}_{0}\ge-\sigma\\
0 & -\sigma>\widetilde{W}_{0}
\end{cases}
.\label{eq:col_final_W0toW}
\end{equation}
Comparing the RHS of last equation with the definition of $\nu_k$, it is suffices show that
\begin{align}
&P(\widetilde{W}_{0}<-\sigma)
\le\frac{1}{\log k}(1-\kappa) -\frac{c}{\log k}
\quad \textup{and }
\label{eq:col_final_stocpart1}
\\
&P(\widetilde{W}_{0}\le x)\le\nu_{k}([0,x])-\frac{c}{\log k}
\quad \textup{for all } x\ge 0 \textup{ such that } \nu_{k}([0,x])<1
.
\label{eq:col_final_stocpart2}
\end{align}
Recall the three parts of $\tilde{\nu}_V$ in \eqref{eq:col_W_nuV} and define $\tilde{\nu}_{W_{0}}^{\bullet}(dx):=\tilde{\nu}_{V}^{\bullet}*\tilde{\nu}_{-Z_{1}}(dx)$ for $\bullet \in \set{1,r,\infty}$.
Figure~\ref{fig:pic} gives an illustration of $\tilde{\nu}_{W_0}$ and $\nu_k$, where bars represent the discrete parts, curves represent the continuous parts and the left two dotted boxes corresponds to last two cases of \eqref{eq:col_final_W0toW}.
\begin{figure}[t]
\begin{center}
    \includegraphics[width = \textwidth]{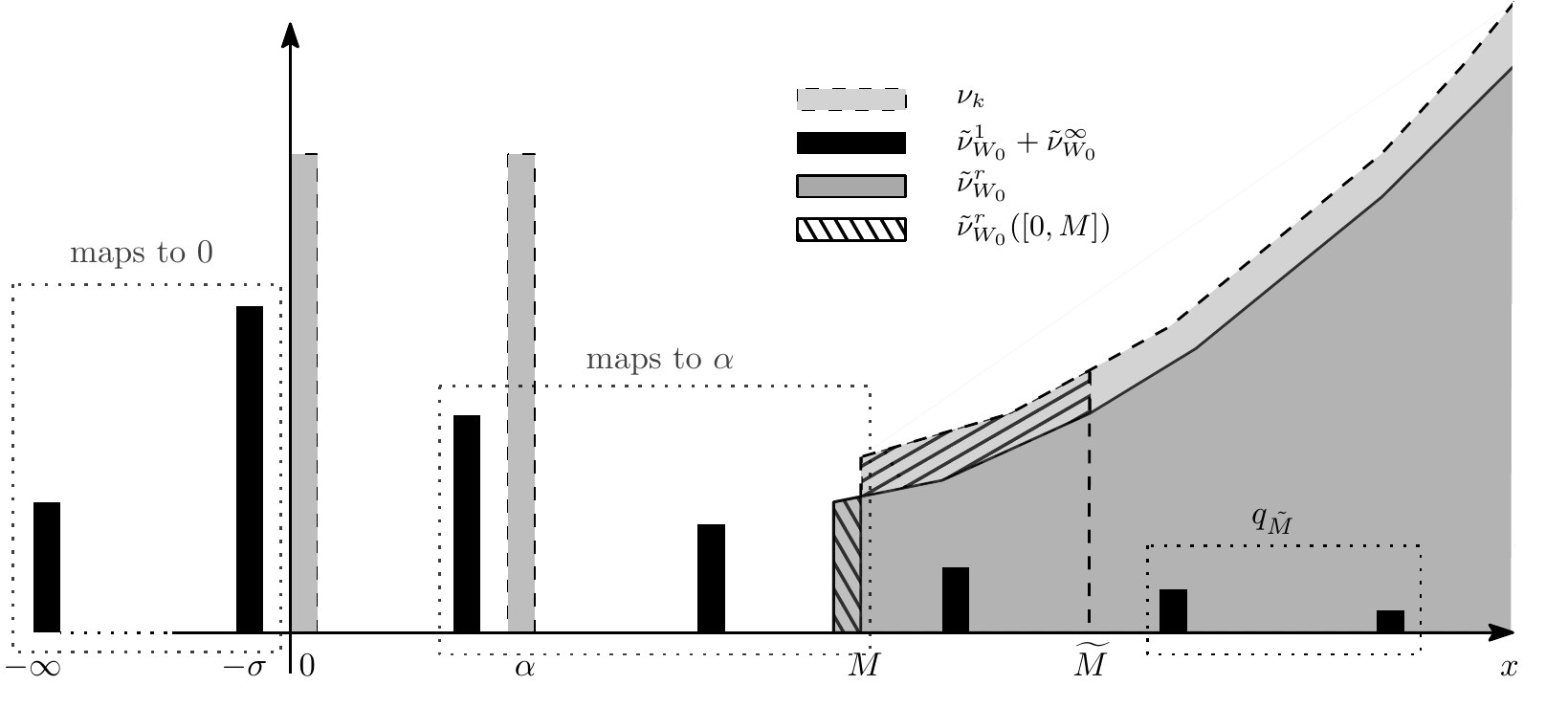}
    \caption{$\tilde{\nu}_{W_0}$ and $\nu_k$}\label{fig:pic}
\end{center}
\end{figure}
Fix $\delta=\frac{1}{2}$. To show \eqref{eq:col_final_stocpart1} is to show that the weight in the first dotted box is strictly smaller than $\nu_k(\set{0})=\kappa$. We set $\kappa=\frac{1}{2}$ such that
\[
P(\pois(\kappa/2)=0)=e^{-1/4}>\frac{3}{4}>\frac{1}{2}=\kappa.
\]
Recall the definition of $C_Z=C_Z(\delta,\kappa,\alpha_0,M,\gamma)$ in \eqref{eq:col_sumzm_CZ}. By Lemma~\ref{lem:col_W_Wbound}(2), for each fixed $\delta,\kappa,\alpha_0,M$, we can choose  $\epsilon_0,\gamma_0,\beta_0$ such that for all $\epsilon<\epsilon_{0},\gamma<\gamma_{0}$, $\beta_0<\beta<1$ and $c_{0}=\frac{1}{10}$,
\begin{align*}
P(\widetilde{W}_{0}<-\sigma)
&\le \frac{e^{\gamma+1-\beta}}{\log k}
\left[
    P(\widetilde{Z}_{1}\neq 0)
    +(1+\epsilon)C_{Z}\gamma\cdot C_{\delta,\alpha,M}^{*}
    +\epsilon
\right]
\\&\le
\frac{e^{\gamma+1-\beta}}{\log k}
\left[
    1-e^{-\frac{1}{2}\kappa-\gamma p_{\ne}^{r}}
    +2C_Z C_{\delta,\alpha,M}^{*}\gamma
    +\epsilon
\right]
\\&=
\frac{4/3}{\log k}
\left[
    \frac{1}{4}+\epsilon+o_{\gamma}(1)
\right]
<\frac{2}{5}\frac{1}{\log k}
=\left(\frac{1}{2}-c_{0}\right)\frac{1}{\log k}.
\end{align*}

The proof of \eqref{eq:col_final_stocpart2} is roughly done in three parts. We first show that the asymptotically, $\tilde{\nu}_{W_0}^r$ is smaller than $\nu_k^r$ by a multiplicative constant factor.
Then we show that the underflow of $\tilde{\nu}_{W_0}^r$ below $M$ (the vertical stripped area in Figure~\ref{fig:pic}) can be compensated by the overflow of $\tilde{\nu}_{W_0}^1$ above $M$ (the $q_{\widetilde{M}}$ box in Figure~\ref{fig:pic}). Finally we make sure that the compensation is can be absorbed into the gap of $\tilde{\nu}_{W_0}^r$ and $\nu_k^r$ (the wide stripped area in Figure~\ref{fig:pic}).

We first look at sufficiently large values of $x$.
By Lemma~\ref{lem:col_W_Wbound}(1),
\begin{equation}
\tilde{\nu}_{W_0}^r(dx)
\le\frac{e^{\gamma+1-\beta}}{\log k}
(1+\alpha_{0})(1+C_{\delta,M}(\gamma+\epsilon))
\exp(\kappa(e^{\alpha\delta}+2e^{-\alpha\delta}-3)/2)\nu_{r}(dx)
,\quad \forall x\ge M
.\label{eq:col_final_tail}
\end{equation}
Let $\alpha_{0}$ be a small constant such that (note that $\phi(\frac{1}{2})=\log2-o_{k}(1)$
and $\exp(\sqrt{2}-3/2)\approx0.92<\frac{12}{13}$)
\[
(1+\alpha_{0})\exp(\kappa(e^{\alpha\delta}+2e^{-\alpha\delta}-3)/2)=(1+o_{\alpha_{0}}(1))\exp(\sqrt{2}-3/2)<\frac{12}{13}<1,
\]
and let $M>M(\alpha_{0})\vee\frac{2}{\delta}$ such that Lemma~\ref{lem:col_zm_S0S1bound}
is satisfied. Recall the definition of constant $C_{\delta,M}$ from the constants in Lemma~\ref{lem:col_zm_S0bound} and Lemma~\ref{lem:col_W_Wbound}.
Given our choice of $\delta,\kappa,\alpha_0,M$ so far,
we can choose $\gamma_{1},\epsilon_1,\beta_1$ such that for all $\gamma\le\gamma_{1},\epsilon\le\epsilon_{1}$, $1-\beta<1-\beta_{1}$ and all $x\ge M$,
\begin{equation}
\textup{RHS of (\ref{eq:col_final_tail})}
\le\frac{12}{13}\cdot\frac{1}{\log k}e^{1+\gamma-\beta}\nu_{r}(dx)\le\frac{14}{15}\frac{1}{\log k}\nu_{r}(dx).\label{eq:col_final_taildensity2}
\end{equation}

Next we consider the values of $x$ near $M$. We first choose $\widetilde{M}=\widetilde{M}(\delta,\alpha,M)>M\vee2\alpha$ such that
\begin{equation}
\frac{1}{15}\nu_{r}([M,\widetilde{M}])=\frac{1}{15}\int_{M}^{\widetilde{M}}\frac{\gamma}{y^{2}}e^{\delta y}dy\ge e^{\gamma_{1}+1-\beta_{1}}(C_{\delta,\alpha,M}^{*}+2e^{\gamma_{1}}+\epsilon_{1})\gamma
,\label{eq:col_final_tildeMproperty}
\end{equation}
where $C_{\delta,\alpha,M}^{*}$ is the constant in Lemma~\ref{lem:col_W_Wbound}(2).
Let $q_{\widetilde{M}}:=\frac{1}{2}P(\pois(\kappa)\cdot\alpha\in(\widetilde{M},2\widetilde{M}))$. $q_{\widetilde{M}}$ is strictly positive since $\widetilde{M}>2\alpha$. By Lemma~\ref{lem:col_W_Wbound}(3), we can choose $\sigma_2,\gamma_2$ such that for all $\sigma<\sigma_{2}$, $\gamma<\gamma_2$,
\begin{equation}
\tilde{\nu}_{W_{0}}^{1}([\widetilde{M},2\widetilde{M}])
=\tilde{\nu}_{V}^{1}*\tilde{\nu}_{-Z_{1}}
    ([\widetilde{M},2\widetilde{M}])
\ge q_{\widetilde{M}}>0
.\label{eq:col_final_qMlowerbound}
\end{equation}
We further choose $\gamma_3,\epsilon_3,\beta_2$ such that for all $\gamma\le\gamma_{3}$, $\epsilon\le\epsilon_{2}<1$, $1-\beta\le1-\beta_{2}$ and some $c_{1}\in(0,q_{\tilde{M}})$,
\begin{equation}
e^{\gamma+1-\beta}
\left[
    (1-q_{\tilde{M}})+\gamma C_{\delta,\alpha,M}^{*}+\epsilon
\right]
\le 1-c_{1} <1
.\label{eq:col_final_qMconstraint}
\end{equation}
(\ref{eq:col_final_taildensity2}), (\ref{eq:col_final_qMlowerbound})
and (\ref{eq:col_final_qMconstraint}) together implies for $x\le\widetilde{M}$,
(note that $\nu_{k}([0,M])=1/\log k$)
\begin{align*}
 & \tilde{\nu}_{W_{0}}([-\infty,x]):=(\tilde{\nu}_{W_{0}}^{1}+\tilde{\nu}_{W_{0}}^{r}+\tilde{\nu}_{W_{0}}^{\infty})([-\infty,x])\\
 & \le\frac{e^{\gamma+1-\beta}}{\log k}\left((1-q_{\widetilde{M}})+\gamma C_{\delta,\alpha,M}^{*}+\epsilon\right)+\frac{14}{15}\frac{1}{\log k}\nu_{r}([M,x\vee M])\\
 & \le\frac{1-c_{1}}{\log k}+\frac{14}{15}\frac{1}{\log k}\nu_{r}([M,x\vee M])\le\nu_{k}([0,x])-\frac{c_{1}}{\log k}.
\end{align*}

Finally, for $x\ge\widetilde{M}$ such that $\nu_{k}([0,x])<1$, we can choose $c_{2},\beta_3$ such that for $\gamma=(\gamma_{0}\wedge\gamma_{1}\wedge\gamma_{2}\wedge\gamma_{3})$ and $1-\beta<1-\beta_{3}$, we have $e^{\gamma+1-\beta}+c_{2}<1+2\gamma e^{\gamma}$.
Using (\ref{eq:col_final_tildeMproperty}), we have
\begin{align*}
\tilde{\nu}_{W_{0}}([-\infty,x]) & \le\frac{e^{\gamma+1-\beta}}{\log k}\left(1+\gamma C_{\delta,\alpha,M}^{*}+\epsilon\right)+\frac{14}{15}\frac{1}{\log k}\nu_{r}([M,\widetilde{M}])+\frac{14}{15}\frac{1}{\log k}\nu_{r}((\widetilde{M},x])\\
 & \le\frac{1}{\log k}+\frac{1}{\log k}\big(e^{\gamma+1-\beta}(1+\gamma C_{\delta,\alpha,M}^{*}+\epsilon)-1-\frac{1}{15}\nu_{r}([M,\widetilde{M}])\big)+\frac{1}{\log k}\nu_{r}([M,x])\\
 & \le\frac{1-c_{2}}{\log k}+\frac{1}{\log k}\nu_{r}([M,x])=\nu_{k}([0,x])-\frac{c_{2}}{k\log k}.
\end{align*}

Combining all pieces together, we have the desired result with $\delta,\kappa,\alpha_0,M,\gamma$ set as specified before,
$\sigma=\sigma_{1}\wedge\sigma_{2}$,
$\epsilon=\epsilon_{0}\wedge\epsilon_{1}\wedge\epsilon_{2}$,
and
$\beta^{0}=\beta_{0}\vee\beta_{1}\vee\beta_{2}\vee\beta_{3}$,
$c=c_{0}\wedge c_{1}\wedge c_{2}$
.
\end{proof}

\section{Appendix\label{sec:appd}}
\begin{proof}[Proof of Fact \ref{fact:col_zm_int_x}]
First fix $n=2$ and $t'\ge2M$. For each $x_{1}\ge M$, either $x_{1}$
or $t'-x_{1}$ is larger than $t'/2$, hence
\begin{equation}
\int_{M}^{t'-M}\frac{1}{x_{1}^{2}(t-x_{1})^{2}}dx_{1}\le\frac{2}{(t'/2)^{2}}\int_{M}^{\infty}\frac{1}{x_{1}^{2}}dx_{1}=\frac{8}{Mt'^{2}}.\label{eq:z0_fact}
\end{equation}
Recursively apply (\ref{eq:z0_fact}) with $t'=t-\sum_{i=1}^{n-j}x_{i},j=2,\dots,n-1$,
we have
\begin{align*}
 & \int_{x_{i}\ge M,\sum_{i=1}^{n-1}x_{i}\le t-M}\frac{1}{x_{1}^{2}\cdots x_{n-1}^{2}(t-\sum_{i=1}^{n-1}x_{i})^{2}}dx_{1}\cdots dx_{n-1}\\
 & =\int_{x_{i}\ge M,\sum_{i=1}^{n-2}x_{i}\le t-2M}\frac{1}{x_{1}^{2}\cdots x_{n-2}^{2}}\left(\int_{M}^{t-\sum_{i=1}^{n-2}x_{i}-M}\frac{1}{x_{n-1}^{2}(t-\sum_{i=1}^{n-1}x_{i})^{2}}dx_{n-1}\right)dx_{1}\cdots dx_{n-2}\\
 & \le\frac{8}{M}\int_{x_{i}\ge M,\sum_{i=1}^{n-2}x_{i}\le t-M}\frac{1}{x_{1}^{2}\cdots x_{n-2}^{2}(t-\sum_{i=1}^{n-2}x_{i})^{2}}dx_{1}\cdots dx_{n-2}\le\cdots\le(\frac{8}{M})^{n}\frac{1}{t^{2}}.
\end{align*}

\end{proof}

\begin{proof}[Proof of Fact \ref{fact:col_sumzm_stableintegr}]
Let $s_{k}=(\frac{\gamma\delta}{\log k(\log\log k)^{2}})^{1/\delta}$,
it is easy to check that
\[
\frac{\gamma}{\delta}s_{k}^{-\delta}\log^{-2}s_{k}=(1+o_{k}(1))\log k.
\]
 For any $\epsilon>0$, let $c$ be large enough such that $(1-\epsilon)^{\delta}-2c^{-\delta}>1$. It follows that
\begin{align*}
\int_{(1-\epsilon)s_{k}}^{\infty}k\log k\cdot\mu_{U}(dx) & =\int_{(1-\epsilon)s_{k}}^{\psi(M)}\frac{\gamma}{(\log x)^{2}}x^{-(1+\delta)}dx\ge\frac{\gamma}{\log^{2}(1-\epsilon)s_{k}}\int_{(1-\epsilon)s_{k}}^{cs_{k}}x^{-(1+\delta)}dx\\
 & =\frac{\gamma}{\delta\log^{2}(1-\epsilon)s_{k}}s_{k}^{-\delta}((1-\epsilon)^{-\delta}-c^{-\delta})>(1+c^{-\delta}+o_{k}(1))\log k.
\end{align*}
Therefore $t_{k}>(1-\epsilon)s_{k}$ for $k\ge k_0$. In the other direction, let $s'_{k}=(c'\log k)^{-1/\delta}$ for some large constant $c'>0$, $\log(s_{k}')=(1+o_{k}(1))\frac{1}{\delta}\log\log k=(1+o_{k}(1))\log s_{k},$ we have
\begin{align*}
\int_{(1+\epsilon)s_{k}}^{\infty}k\log k\cdot\mu_{U}(dx) & =\int_{(1+\epsilon)s_{k}}^{\psi(M)}\frac{\gamma}{(\log x)^{2}}\frac{1}{x^{1+\delta}}dx\\
 & \le\frac{\gamma}{\log^{2}\psi(M)}\int_{s'_{k}}^{\infty}x^{-(1+\delta)}dx+\frac{\gamma}{\log^{2}(s'_{k})}\int_{(1+\epsilon)s_{k}}^{\infty}x^{-(1+\delta)}dx\\
 & \le\frac{\gamma}{\delta\log^{2}\psi(M)}c'^{-\delta}\log k+(1+o_{k}(1))(1+\epsilon)^{-\delta}\log k.
\end{align*}
Let $c'$ be large enough such that $\frac{\gamma}{\delta\log^{2}\psi(M)}c'^{-\delta}+(1+\epsilon)^{-\delta}<1-c'^{-1}<1$,
we have for $k\ge k_0$ that $t_{k}<(1+\epsilon)s_{k}$. This completes the Part 1. Part 2 can be derived similarly.
\end{proof}
\bibliographystyle{plain}
\bibliography{ref}

\end{document}